\NeedsTeXFormat{LaTeX2e}
\documentclass{article}
\usepackage[utf8]{inputenc}
\usepackage[margin=3cm]{geometry}

\usepackage{amsmath,amsthm,amssymb,amsfonts}
\usepackage{mathtools}
\usepackage{tikz}
\usepackage{tikz-cd}
\usepackage{ifthen}
\usepackage[normalem]{ulem}
\usepackage{lipsum}
\usepackage{authblk}
\usepackage{physics}
\usepackage{accents }
\usepackage{extpfeil}
\usepackage{ wasysym }

\usetikzlibrary{arrows}

\usepackage{hyperref}

\usepackage{xcolor}

\usepackage{algorithm}
\usepackage{algorithmic}

\newtheorem{theorem}{Theorem}
\newtheorem{definition}{Definition}
\newtheorem{proposition}{Proposition}
\newtheorem{remark}{Remark}
\newtheorem{lemma}{Lemma}
\newtheorem{example}{Example}
\newtheorem{conjecture}{Conjecture}
\newtheorem{corollary}{Corollary}
\theoremstyle{definition}

\newcommand{\D}{\partial}
\newcommand{\Z}{\mathbb{Z}}
\newcommand{\F}{\mathbb{F}}
\newcommand{\local}{\underline{\mathcal{L}}}
\newcommand{\R}{\mathbb{R}}
\newcommand{\Sp}{\mathbb{S}}

\newcommand{\N}{\mathbb{N}}
\newcommand{\tran}{\mathbf{t}}
\newcommand{\vect}{\mathbf{vect}}

\newcommand{\im}{\mathrm{im}}
\newcommand{\coim}{\mathrm{coim}}
\newcommand{\coker}{\mathrm{coker}}
\newcommand{\surj}{\twoheadrightarrow}
\newcommand{\inj}{\hookrightarrow}

\newcommand{\Ab}{\mathcal{A}}
\newcommand{\torus}{\mathbb{T}}
\newcommand{\Hg}{\mathrm{H}}
\newcommand{\Ig}{\mathrm{I}}

\newcommand{\bisheafcat}{\mathbf{BiSh}}
\newcommand{\sheafcat}{\mathbf{Sh}}
\newcommand{\cosheafcat}{\mathbf{coSh}}
\newcommand{\localsystemcat}{\mathbf{Loc}}
\newcommand{\colocalsystemcat}{\mathbf{coLoc}}

\newcommand{\Fsheaf}{\overline{F}}
\newcommand{\Fcosheaf}{\underline{F}}
\newcommand{\Fbisheaf}{\overline{\underline{F}}}
\newcommand{\Esheaf}{\overline{E}}
\newcommand{\Mcosheaf}{\underline{M}}
\newcommand{\Kcosheaf}{\underline{K}}
\newcommand{\Ibisheaf}{\overline{\underline{I}}}

\newcommand{\sheaf}[1]{\overline{#1}}
\newcommand{\cosheaf}[1]{\underline{#1}}
\newcommand{\bisheaf}[1]{\overline{\underline{#1}}}

\newcommand{\Rsheaf}[1]{\widetilde{#1}}
\newcommand{\Rcosheaf}[1]{\undertilde{#1}}
\newcommand{\Rbisheaf}[1]{\widetilde{\undertilde{#1}}}
\newcommand{\Sub}{\mathrm{Sub}}
\newcommand{\Cl}{\mathbf{cl}}
\newcommand{\St}{\mathbf{st}}

\newcommand{\ballcolour}{gray}
\newcommand{\circlecolour}{red}
\newcommand{\pathcolour}{yellow}
\newcommand{\bordercolour}{black}
\newcommand{\pointcolour}{blue}

\newcommand\scalemath[2]{\scalebox{#1}{\mbox{\ensuremath{\displaystyle #2}}}}
\newcommand{\scalesize}{0.8}

\title{Persistent Local Systems of Periodic Spaces}

\author[1]{Adam Onus\footnote{\href{a.onus@qmul.ac.uk}{a.onus@qmul.ac.uk}} }
\author[1]{Primoz Skraba\footnote{\href{p.skraba@qmul.ac.uk}{p.skraba@qmul.ac.uk}}}
\affil[1]{School of Mathematical Sciences, Queen Mary University of London, U.K.}

\date{\today}

\begin{document}

\maketitle

\begin{abstract}
The topology of periodic spaces has attracted a lot of interest in recent years in order to study and classify crystalline structures and other large homogeneous data sets, such as the distribution of galaxies in cosmology.
In practice, these objects are studied by taking a finite sample and introducing periodic boundary conditions, however this introduces and removes many subtle homological features.
Here, build on the work of Onus and Robins \cite{onus2022quantifying} and Onus and Skraba \cite{onus2023computing} to investigate whether one can recover the (persistent) homology of a periodic cell complex $K$ from a finite quotient space $G$ of equivalence classes under translations.
In particular, we search for a computationally friendly method to identify all ``toroidal cycles'' of $G$ which do not lift to cycles in $K$. 
We show that all toroidal and non-toroidal cycles of $G$ of arbitrary homology degree can be completely classified for $K$ of arbitrary periodicity using the recently developed machinery of bisheaves and persistent local systems.
In doing so, we also introduce a framework for a computationally viable persistence theory of periodic spaces.
Finally, we outline algorithms for how to apply our results to real data, including a polynomial time algorithm for calculating the canonical persistent local system attributed to a given bisheaf.
\end{abstract}

\section{Introduction}
Spatially periodic topological spaces arise naturally in many areas of mathematics, physics, and materials science.  
These spaces exhibit a repeating pattern or symmetry over regular intervals, making them important models for understanding phenomena such as the formation of (quasi)-crystals, repeating patterns in dynamical systems, and, more generally, systems with periodic boundary conditions~\cite{cramer2020evolution,webber2005recurrence,coley2023spatial}.  
Periodic spaces also appear in the study of tiling patterns, lattices, and periodic functions~\cite{kolbe2022tile,von1991nodal}.

In this paper we study the (persistent) homology of periodic spaces.  
As a motivating example, consider a periodic point set $\mathcal{P}$ that arises from the $d$-dimensional integer lattice $\mathbb{Z}^d$.  
Let $P\subset[0,1]^d$ and set $\mathcal{P}:=P+\mathbb{Z}^d$.  
We can then examine the $r$-offset of $\mathcal{P}$ -- either for a fixed $r$, as in Figure \ref{fig:motivation}, or for varying $r$ in persistent homology.  
Because $\mathcal{P}$ is infinite, we embed it into a flat $d$-dimensional torus, which represents a single unit cell in~$\mathbb{R}^d$ with periodic boundary conditions.  
This practice is common in simulations in materials science, cosmology, and other fields, as it captures the relevant physics while minimising memory requirements.  
For instance, complexes constructed in this way have recently improved machine-learning pipelines for perovskite design~\cite{hu2025quotient}.

\begin{figure}
    \centering
        \begin{tikzpicture}
    \path[use as bounding box] (-2,-2) rectangle (6,6);

    \foreach \i in {-1,...,1}
        {
        \foreach \j in {-1,...,1}
            {
            \fill[\ballcolour!50] (0.19+4*\i, 1.39+4*\j) circle (0.32);
            \fill[\ballcolour!50] (1.32+4*\i, 1.41+4*\j) circle (0.32);
            \fill[\ballcolour!50] (1.20+4*\i, 3.64+4*\j) circle (0.32);
            \fill[\ballcolour!50] (2.64+4*\i, 0.61+4*\j) circle (0.32);
            \fill[\ballcolour!50] (1.27+4*\i, 2.64+4*\j) circle (0.32);
            \fill[\ballcolour!50] (0.36+4*\i, 3.83+4*\j) circle (0.32);
            \fill[\ballcolour!50] (2.15+4*\i, 1.25+4*\j) circle (0.32);
            \fill[\ballcolour!50] (0.81+4*\i, 1.90+4*\j) circle (0.32);
            \fill[\ballcolour!50] (0.97+4*\i, 3.53+4*\j) circle (0.32);
            \fill[\ballcolour!50] (2.61+4*\i, 1.65+4*\j) circle (0.32);
            \fill[\ballcolour!50] (2.41+4*\i, 3.25+4*\j) circle (0.32);
            \fill[\ballcolour!50] (2.69+4*\i, 3.39+4*\j) circle (0.32);
            \fill[\ballcolour!50] (3.61+4*\i, 3.57+4*\j) circle (0.32);
            \fill[\ballcolour!50] (0.07+4*\i, 2.65+4*\j) circle (0.32);
            \fill[\ballcolour!50] (0.04+4*\i, 2.23+4*\j) circle (0.32);
            \fill[\ballcolour!50] (1.01+4*\i, 2.90+4*\j) circle (0.32);
            \fill[\ballcolour!50] (1.60+4*\i, 0.20+4*\j) circle (0.32);
            \fill[\ballcolour!50] (0.40+4*\i, 0.88+4*\j) circle (0.32);
            \fill[\ballcolour!50] (0.51+4*\i, 0.88+4*\j) circle (0.32);
            \fill[\ballcolour!50] (2.84+4*\i, 2.27+4*\j) circle (0.32);
            \fill[\ballcolour!50] (0.34+4*\i, 0.94+4*\j) circle (0.32);
            \fill[\ballcolour!50] (3.96+4*\i, 3.79+4*\j) circle (0.32);
            \fill[\ballcolour!50] (1.94+4*\i, 1.92+4*\j) circle (0.32);
            \fill[\ballcolour!50] (3.59+4*\i, 3.49+4*\j) circle (0.32);
            \fill[\ballcolour!50] (3.89+4*\i, 1.98+4*\j) circle (0.32);
            \fill[\ballcolour!50] (1.46+4*\i, 2.24+4*\j) circle (0.32);
            \fill[\ballcolour!50] (0.08+4*\i, 3.71+4*\j) circle (0.32);
            \fill[\ballcolour!50] (2.56+4*\i, 2.69+4*\j) circle (0.32);
            \fill[\ballcolour!50] (3.36+4*\i, 2.21+4*\j) circle (0.32);
            \fill[\ballcolour!50] (2.28+4*\i, 3.60+4*\j) circle (0.32);
            \fill[\ballcolour!50] (3.95+4*\i, 1.35+4*\j) circle (0.32);
            \fill[\ballcolour!50] (1.55+4*\i, 2.32+4*\j) circle (0.32);
            \fill[\ballcolour!50] (2.81+4*\i, 1.04+4*\j) circle (0.32);
            \fill[\ballcolour!50] (0.10+4*\i, 0.36+4*\j) circle (0.32);
            \fill[\ballcolour!50] (1.05+4*\i, 3.83+4*\j) circle (0.32);
            \fill[\ballcolour!50] (0.75+4*\i, 1.18+4*\j) circle (0.32);
            \fill[\ballcolour!50] (0.85+4*\i, 0.36+4*\j) circle (0.32);
            \fill[\ballcolour!50] (2.78+4*\i, 2.90+4*\j) circle (0.32);
            \fill[\ballcolour!50] (2.63+4*\i, 2.45+4*\j) circle (0.32);
            \fill[\ballcolour!50] (0.17+4*\i, 2.81+4*\j) circle (0.32);
            \fill[\ballcolour!50] (1.54+4*\i, 2.18+4*\j) circle (0.32);
            \fill[\ballcolour!50] (1.16+4*\i, 3.23+4*\j) circle (0.32);
            \fill[\ballcolour!50] (1.28+4*\i, 0.37+4*\j) circle (0.32);
            \fill[\ballcolour!50] (0.63+4*\i, 3.99+4*\j) circle (0.32);
            \fill[\ballcolour!50] (1.27+4*\i, 1.37+4*\j) circle (0.32);
            \fill[\ballcolour!50] (1.96+4*\i, 0.75+4*\j) circle (0.32);
            \fill[\ballcolour!50] (0.08+4*\i, 3.89+4*\j) circle (0.32);
            \fill[\ballcolour!50] (1.04+4*\i, 3.75+4*\j) circle (0.32);
            \fill[\ballcolour!50] (1.96+4*\i, 0.15+4*\j) circle (0.32);
            \fill[\ballcolour!50] (2.32+4*\i, 0.93+4*\j) circle (0.32);
            \fill[\ballcolour!50] (0.65+4*\i, 3.65+4*\j) circle (0.32);
            \fill[\ballcolour!50] (3.50+4*\i, 0.08+4*\j) circle (0.32);
            \fill[\ballcolour!50] (1.92+4*\i, 3.47+4*\j) circle (0.32);
            \fill[\ballcolour!50] (0.79+4*\i, 1.99+4*\j) circle (0.32);
            \fill[\ballcolour!50] (3.66+4*\i, 0.35+4*\j) circle (0.32);
            \fill[\ballcolour!50] (2.55+4*\i, 3.24+4*\j) circle (0.32);
            \fill[\ballcolour!50] (2.22+4*\i, 2.35+4*\j) circle (0.32);
            \fill[\ballcolour!50] (3.41+4*\i, 2.70+4*\j) circle (0.32);
            \fill[\ballcolour!50] (3.68+4*\i, 0.93+4*\j) circle (0.32);
            \fill[\ballcolour!50] (1.55+4*\i, 1.85+4*\j) circle (0.32);
            }
        }

    \foreach \i in {-1,...,1}
        {
        \foreach \j in {-1,...,1}
            {
            \node[shape=circle,fill=\pointcolour,inner sep=0pt,minimum size=0.12cm] () at (0.19+4*\i, 1.39+4*\j) {};
            \node[shape=circle,fill=\pointcolour,inner sep=0pt,minimum size=0.12cm] () at (1.32+4*\i, 1.41+4*\j) {};
            \node[shape=circle,fill=\pointcolour,inner sep=0pt,minimum size=0.12cm] () at (1.20+4*\i, 3.64+4*\j) {};
            \node[shape=circle,fill=\pointcolour,inner sep=0pt,minimum size=0.12cm] () at (2.64+4*\i, 0.61+4*\j) {};
            \node[shape=circle,fill=\pointcolour,inner sep=0pt,minimum size=0.12cm] () at (1.27+4*\i, 2.64+4*\j) {};
            \node[shape=circle,fill=\pointcolour,inner sep=0pt,minimum size=0.12cm] () at (0.36+4*\i, 3.83+4*\j) {};
            \node[shape=circle,fill=\pointcolour,inner sep=0pt,minimum size=0.12cm] () at (2.15+4*\i, 1.25+4*\j) {};
            \node[shape=circle,fill=\pointcolour,inner sep=0pt,minimum size=0.12cm] () at (0.81+4*\i, 1.90+4*\j) {};
            \node[shape=circle,fill=\pointcolour,inner sep=0pt,minimum size=0.12cm] () at (0.97+4*\i, 3.53+4*\j) {};
            \node[shape=circle,fill=\pointcolour,inner sep=0pt,minimum size=0.12cm] () at (2.61+4*\i, 1.65+4*\j) {};
            \node[shape=circle,fill=\pointcolour,inner sep=0pt,minimum size=0.12cm] () at (2.41+4*\i, 3.25+4*\j) {};
            \node[shape=circle,fill=\pointcolour,inner sep=0pt,minimum size=0.12cm] () at (2.69+4*\i, 3.39+4*\j) {};
            \node[shape=circle,fill=\pointcolour,inner sep=0pt,minimum size=0.12cm] () at (3.61+4*\i, 3.57+4*\j) {};
            \node[shape=circle,fill=\pointcolour,inner sep=0pt,minimum size=0.12cm] () at (0.07+4*\i, 2.65+4*\j) {};
            \node[shape=circle,fill=\pointcolour,inner sep=0pt,minimum size=0.12cm] () at (0.04+4*\i, 2.23+4*\j) {};
            \node[shape=circle,fill=\pointcolour,inner sep=0pt,minimum size=0.12cm] () at (1.01+4*\i, 2.90+4*\j) {};
            \node[shape=circle,fill=\pointcolour,inner sep=0pt,minimum size=0.12cm] () at (1.60+4*\i, 0.20+4*\j) {};
            \node[shape=circle,fill=\pointcolour,inner sep=0pt,minimum size=0.12cm] () at (0.40+4*\i, 0.88+4*\j) {};
            \node[shape=circle,fill=\pointcolour,inner sep=0pt,minimum size=0.12cm] () at (0.51+4*\i, 0.88+4*\j) {};
            \node[shape=circle,fill=\pointcolour,inner sep=0pt,minimum size=0.12cm] () at (2.84+4*\i, 2.27+4*\j) {};
            \node[shape=circle,fill=\pointcolour,inner sep=0pt,minimum size=0.12cm] () at (0.34+4*\i, 0.94+4*\j) {};
            \node[shape=circle,fill=\pointcolour,inner sep=0pt,minimum size=0.12cm] () at (3.96+4*\i, 3.79+4*\j) {};
            \node[shape=circle,fill=\pointcolour,inner sep=0pt,minimum size=0.12cm] () at (1.94+4*\i, 1.92+4*\j) {};
            \node[shape=circle,fill=\pointcolour,inner sep=0pt,minimum size=0.12cm] () at (3.59+4*\i, 3.49+4*\j) {};
            \node[shape=circle,fill=\pointcolour,inner sep=0pt,minimum size=0.12cm] () at (3.89+4*\i, 1.98+4*\j) {};
            \node[shape=circle,fill=\pointcolour,inner sep=0pt,minimum size=0.12cm] () at (1.46+4*\i, 2.24+4*\j) {};
            \node[shape=circle,fill=\pointcolour,inner sep=0pt,minimum size=0.12cm] () at (0.08+4*\i, 3.71+4*\j) {};
            \node[shape=circle,fill=\pointcolour,inner sep=0pt,minimum size=0.12cm] () at (2.56+4*\i, 2.69+4*\j) {};
            \node[shape=circle,fill=\pointcolour,inner sep=0pt,minimum size=0.12cm] () at (3.36+4*\i, 2.21+4*\j) {};
            \node[shape=circle,fill=\pointcolour,inner sep=0pt,minimum size=0.12cm] () at (2.28+4*\i, 3.60+4*\j) {};
            \node[shape=circle,fill=\pointcolour,inner sep=0pt,minimum size=0.12cm] () at (3.95+4*\i, 1.35+4*\j) {};
            \node[shape=circle,fill=\pointcolour,inner sep=0pt,minimum size=0.12cm] () at (1.55+4*\i, 2.32+4*\j) {};
            \node[shape=circle,fill=\pointcolour,inner sep=0pt,minimum size=0.12cm] () at (2.81+4*\i, 1.04+4*\j) {};
            \node[shape=circle,fill=\pointcolour,inner sep=0pt,minimum size=0.12cm] () at (0.10+4*\i, 0.36+4*\j) {};
            \node[shape=circle,fill=\pointcolour,inner sep=0pt,minimum size=0.12cm] () at (1.05+4*\i, 3.83+4*\j) {};
            \node[shape=circle,fill=\pointcolour,inner sep=0pt,minimum size=0.12cm] () at (0.75+4*\i, 1.18+4*\j) {};
            \node[shape=circle,fill=\pointcolour,inner sep=0pt,minimum size=0.12cm] () at (0.85+4*\i, 0.36+4*\j) {};
            \node[shape=circle,fill=\pointcolour,inner sep=0pt,minimum size=0.12cm] () at (2.78+4*\i, 2.90+4*\j) {};
            \node[shape=circle,fill=\pointcolour,inner sep=0pt,minimum size=0.12cm] () at (2.63+4*\i, 2.45+4*\j) {};
            \node[shape=circle,fill=\pointcolour,inner sep=0pt,minimum size=0.12cm] () at (0.17+4*\i, 2.81+4*\j) {};
            \node[shape=circle,fill=\pointcolour,inner sep=0pt,minimum size=0.12cm] () at (1.54+4*\i, 2.18+4*\j) {};
            \node[shape=circle,fill=\pointcolour,inner sep=0pt,minimum size=0.12cm] () at (1.16+4*\i, 3.23+4*\j) {};
            \node[shape=circle,fill=\pointcolour,inner sep=0pt,minimum size=0.12cm] () at (1.28+4*\i, 0.37+4*\j) {};
            \node[shape=circle,fill=\pointcolour,inner sep=0pt,minimum size=0.12cm] () at (0.63+4*\i, 3.99+4*\j) {};
            \node[shape=circle,fill=\pointcolour,inner sep=0pt,minimum size=0.12cm] () at (1.27+4*\i, 1.37+4*\j) {};
            \node[shape=circle,fill=\pointcolour,inner sep=0pt,minimum size=0.12cm] () at (1.96+4*\i, 0.75+4*\j) {};
            \node[shape=circle,fill=\pointcolour,inner sep=0pt,minimum size=0.12cm] () at (0.08+4*\i, 3.89+4*\j) {};
            \node[shape=circle,fill=\pointcolour,inner sep=0pt,minimum size=0.12cm] () at (1.04+4*\i, 3.75+4*\j) {};
            \node[shape=circle,fill=\pointcolour,inner sep=0pt,minimum size=0.12cm] () at (1.96+4*\i, 0.15+4*\j) {};
            \node[shape=circle,fill=\pointcolour,inner sep=0pt,minimum size=0.12cm] () at (2.32+4*\i, 0.93+4*\j) {};
            \node[shape=circle,fill=\pointcolour,inner sep=0pt,minimum size=0.12cm] () at (0.65+4*\i, 3.65+4*\j) {};
            \node[shape=circle,fill=\pointcolour,inner sep=0pt,minimum size=0.12cm] () at (3.50+4*\i, 0.08+4*\j) {};
            \node[shape=circle,fill=\pointcolour,inner sep=0pt,minimum size=0.12cm] () at (1.92+4*\i, 3.47+4*\j) {};
            \node[shape=circle,fill=\pointcolour,inner sep=0pt,minimum size=0.12cm] () at (0.79+4*\i, 1.99+4*\j) {};
            \node[shape=circle,fill=\pointcolour,inner sep=0pt,minimum size=0.12cm] () at (3.66+4*\i, 0.35+4*\j) {};
            \node[shape=circle,fill=\pointcolour,inner sep=0pt,minimum size=0.12cm] () at (2.55+4*\i, 3.24+4*\j) {};
            \node[shape=circle,fill=\pointcolour,inner sep=0pt,minimum size=0.12cm] () at (2.22+4*\i, 2.35+4*\j) {};
            \node[shape=circle,fill=\pointcolour,inner sep=0pt,minimum size=0.12cm] () at (3.41+4*\i, 2.70+4*\j) {};
            \node[shape=circle,fill=\pointcolour,inner sep=0pt,minimum size=0.12cm] () at (3.68+4*\i, 0.93+4*\j) {};
            \node[shape=circle,fill=\pointcolour,inner sep=0pt,minimum size=0.12cm] () at (1.55+4*\i, 1.85+4*\j) {};
            }
        }

    \draw[thick,\bordercolour!40] (-4,0) -- (0,0) -- (0,-4);
    \draw[thick,\bordercolour!40] (-4,4) -- (0,4) -- (0,8);
    \draw[thick,\bordercolour!40] (4,-4) -- (4,0) -- (8,0);
    \draw[thick,\bordercolour!40] (4,8) -- (4,4) -- (8,4);
    \draw[thick,\bordercolour!80] (0,0) -- (0,4) -- (4,4) -- (4,0) -- cycle;

    \draw[densely dotted,\circlecolour,line width=0.7mm] (4,0.8) circle (0.4);
    \draw[densely dotted,\pathcolour,line width=0.7mm] (0,1.3) -- (1.2,1.3) -- (2.5,2.5) -- (3.9,1.9) -- (4,1.3);

    \fill[white] (-4.5,-4.5) -- (-4.5,-2) -- (8.5,-2) -- (8.5,-4.5) -- cycle;
    \fill[white] (-4.5,8.5) -- (-4.5,6) -- (8.5,6) -- (8.5,8.5) -- cycle;
    \fill[white] (8.5,-2) -- (6,-2) -- (6,6) -- (8.5,6) -- cycle;
    \fill[white] (-4.5,-2) -- (-2,-2) -- (-2,6) -- (-4.5,6) -- cycle;
    \end{tikzpicture}
\caption{A 2-periodic point cloud, with the $r$-offset (for $r=0.08$) shaded in \ballcolour.  
    Horizontal and vertical lines indicate the lattice structure.  
    The dashed \circlecolour\, circle and dashed \pathcolour\, path indicate 1-cycles, the latter appearing only after imposing periodic boundary conditions on $[0,1]^2$.}
    \label{fig:motivation}
\end{figure}
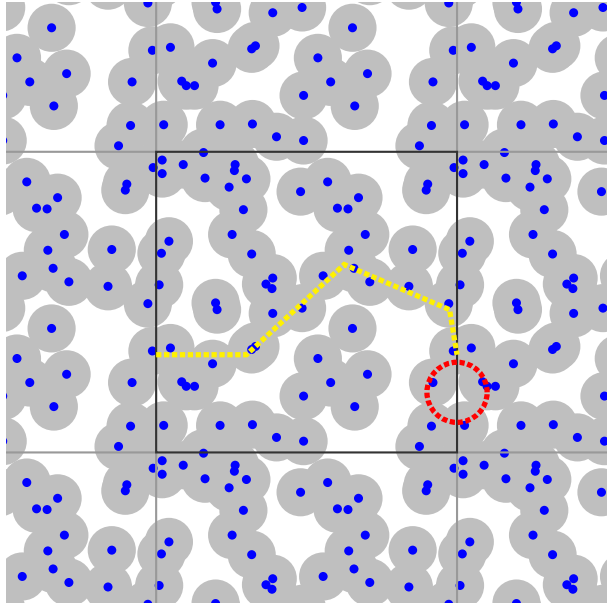

If we wish to compute the homology of the $r$-offset of $\mathcal{P}\cap W$ for a bounded region~$W$, the behaviour can be surprisingly subtle even when $W$ itself is simple.  
On the one hand, periodicity allows a decomposition into repeating units; on the other, boundary effects introduce intermittent homological features that complicate computation.  
Take Figure \ref{fig:motivation} as an example.  
When $W=[0,1]^2$ (the highlighted box) is viewed \emph{without} boundary identifications, the dashed \circlecolour\, circle is invisible to homology.  
Imposing periodic boundary conditions turns the dashed \pathcolour\, path into a cycle that does \emph{not} lift to a homology class of the infinite periodic space—we call such cycles \emph{toroidal}.  
Classifying toroidal cycles is essential: it reveals the true topology of the periodic point cloud and captures physically relevant properties.  
For instance, the appearance of a toroidal 1-cycle, as opposed to a ``true'' 1-cycle, signals the merger of large connected components—features that are useful when comparing three-dimensional layered materials~\cite{edelsbrunner2024merge,thygesen2023level}.

This paper builds on earlier work on the homology of periodic cellular complexes, which treated only special cases—degree-0 and degree-1 homology~\cite{onus2022quantifying} or one-dimensional periodicity~\cite{onus2023computing}—and it contributes to the growing literature on topological and geometric properties of highly symmetric data.  
Related work on periodic point clouds~\cite{edelsbrunner2024merge} introduced a complete and computationally efficient degree-0 \emph{persistent} homology theory for periodic spaces, represented by a variant of merge trees, but the underlying techniques do not extend straightforwardly to higher degrees.  
Independently, \cite{mcmanus2024computing} used similar methods to classify real crystals, and \cite{yim2025inferring} employed covering-space theory and monodromy to infer the ambient topology of non-Euclidean data.  
Applying periodic boundary conditions in Figure \ref{fig:motivation}, for example, is akin to using $\mathcal{P}$ to learn the topology of the (a~priori unknown) space $\torus^2$ in which it is embedded.  
Finally, \cite{adams2024persistent} introduced persistent \emph{equivariant} cohomology, hinting at extensions of our approach from translational to more general symmetries.

Here, we extend the results of~\cite{onus2022quantifying,onus2023computing} by characterizing the (persistent) homology in all degrees of finite quotient spaces of periodic cellular complexes with arbitrary-dimensional periodicity.  
This work is the first to apply the theory of \emph{bisheaves} and their canonical invariants—\emph{persistent local systems} (PLSs) from~\cite{macpherson2021persistent}—to periodic data.  
Bisheaves furnish a homology theory for maps to oriented manifolds and support richer structure than ordinary (co)sheaves; their associated PLSs can be computed efficiently using parallel algorithms.  
By leveraging bisheaves and PLSs, we unite ideas from covering-space theory, monodromy, and scalable topological data analysis, establishing a comprehensive bridge between the (persistent) homology of a periodic space and that of its finite quotient.

Our main contributions are:
\begin{enumerate}
    \item a complete classification of toroidal cycles (e.g.\ the yellow ``cycle'' in Figure \ref{fig:motivation}) that appear when periodic boundary conditions are imposed (\emph{Theorems \ref{thm:1d-toroidal} and \ref{thm:toroidal}});
    \item a proof that for $\mathbb{Z}^d$-actions it suffices to study one periodic direction at a time (\emph{Proposition \ref{prop:toroidal-compression}});
    \item polynomial-time algorithms for computing the canonical PLS of a bisheaf (\emph{Algorithms \ref{alg:epi}–\ref{alg:mono}} and \emph{Theorem \ref{thm:algorithms}}).
\end{enumerate}
These results show that bisheaves and PLSs constitute a practical computational toolkit for analysing periodic spaces.


\section{Preliminaries} \label{sec:prelim}
We divide the preliminaries into two sections. First, we introduce the required background on cellular bisheaves and local systems, largely following \cite{nanda2020canonical}. Then we describe the background on periodic spaces, largely following \cite{onus2022quantifying} and \cite{onus2023computing}.

\subsection{Bisheaves and Local Systems}
Throughout this paper, we fix a cell complex $X$ and take (co)homology with coefficients in some fixed field $\F$.
We recall that $X$ can be considered a poset whose objects are the cells of the $X$ and whose partial ordering is $\sigma<\tau$ when $\tau$ is a face of $\sigma$.
Equivalently, $\sigma\leq\tau$ if and only if $\St(\sigma)\subseteq\St(\tau)$, where $\St(\sigma)$ denotes the open star of the cell $\sigma$ containing all cofaces of $\sigma$.
While the standard convention is to have the opposite inequality (i.e. $\sigma<\tau$ implies $\sigma$ is a face of $\tau$), this convention is better aligned to the sheaf-theoretic techniques we work with.

Let $\Ab$ be an (arbitrary) abelian category and $\vect$ the (abelian) category of finite vector spaces over some field $\F$.
All of our results apply to any field $\F$, although the examples we give in the paper are in practice usually set for $\F=\Z/2\Z$ for simplicity.
Where possible, we also state all definitions and results in full generality for $\Ab$.
The following definitions come from \cite{macpherson2021persistent} and \cite{nanda2020canonical}, to which we direct the reader for more information.

\begin{definition}
A (cellular) sheaf $\Fsheaf$ over $X$ valued in $\Ab$ is a contravariant functor from the poset category of $X$ to $\Ab$.
If, in addition, $\Fsheaf(\sigma\leq\tau):\Fsheaf(\tau)\to\Fsheaf(\sigma)$ is an epimorphism for every face relation $\sigma\leq\tau$ then we say $\Fsheaf$ is an episheaf.
\end{definition}

We denote by $\sheafcat(X)$ the (abelian) category of cellular sheaves over $X$, where a morphism $\Fsheaf\to\sheaf{G}$ is a collection of cell-wise maps $\Fsheaf(\sigma)\to\sheaf{G}(\sigma)$ which commute with the face relation maps of $\Fsheaf$ and $\sheaf{G}$.
For the family of sheaves with which we will work, consider a cell map $f:Y\to X$, by which we mean $f$ is a continuous map which is constructible in the sense of Definition~2.2 of \cite{macpherson2021persistent}.
Note that this includes most standard definitions of simplicial maps for simplicial complexes.

We can study $Y$ by pulling back along the fibers of $f$, defining a sheaf over $X$ valued in $\vect$ as follows.
First, to every cell $\sigma$ of $X$, we attribute the following relative homology
\[
\Fsheaf_\bullet(\sigma) = \Hg_\bullet\left(Y,Y\setminus f^{-1}(\St(\sigma))\right)
\]
For $Y$ locally compact and paracompact it follows that
\[
\Hg_\bullet\left(\Cl\left(f^{-1}(\St(\sigma))\right),\D f^{-1}(\St(\sigma))\right) \cong \Tilde{\Hg}_\bullet\left(\Cl\left(f^{-1}(\St(\sigma))/\D f^{-1}(\St(\sigma))\right)\right)
\]
where $\Cl$ denotes the set closure, $\D$ denotes the boundary and $\Tilde{\Hg}$ denotes the reduced homology.
That is, we can associate the relative homology of $(Y,Y\setminus f^{-1}(\St(\sigma)))$ with the reduced homology of its one-point compactification.
Moreover, for each $\sigma\leq \tau$, the inclusion $\St(\sigma)\inj\St(\tau)$ means $Y\setminus f^{-1}(\St(\tau))\inj Y\setminus f^{-1}(\St(\sigma))$ which in turn induces a canonical map 
\[
\Fsheaf_\bullet(\sigma\leq\tau):\Hg_\bullet\left(Y,Y\setminus f^{-1}(\St(\tau))\right)\to\Hg_\bullet\left(Y,Y\setminus f^{-1}(\St(\sigma))\right),
\]
completing the sheaf construction.

\begin{figure}
    \centering
    \begin{tikzpicture}[scale=0.9]
        \node (ldots) at (-8,2) {$\cdots$};
        \node (rdots) at (6,2) {$\cdots$};
        \foreach \i in {-7,...,5}
            {
                \foreach \j in {1,...,3}
    	    {
    	           \node[shape=circle,fill=black,inner sep=0pt,minimum size=0.15cm] (\i,\j) at (\i,\j) { };
    	    }
            }
        \foreach \i in {-7,...,5}
            {
                \draw (\i-0.5,3) to (\i+0.5,3);
                \draw (\i,3) to (\i+0.5,2.5);
                \draw (\i-0.5,2.5) to (\i,2);
                \draw (\i,2) to (\i+0.5,1.5);
                \draw (\i-0.5,1.5) to (\i,1);
                \draw[gray] (\i,1) to (\i+0.5,2);
                \draw[gray] (\i-0.5,2) to (\i,3);
            }
    \end{tikzpicture}
    \caption{A graph $K$ in $\R^2$ with vertex set $\Z\times\{0,\pm1\}$. $K$ is 1-periodic (see Definition~\ref{def:periodic}) and for each $n\in \mathbb{N}$, the quotient space $K/n\Z$ has a natural projection to the 1-sphere $\Sp^1$ defined by $(x,y)\mapsto e^{2\pi i x/n}$.}
    \label{fig:running-example}
\end{figure}
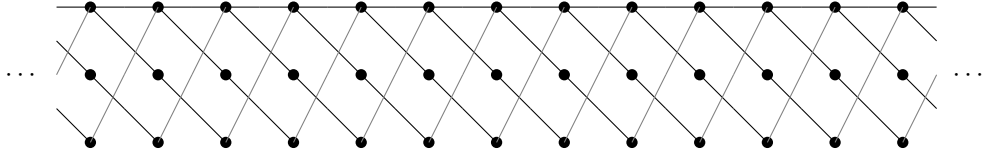

\begin{example}
Consider the map $K/n\Z\to \Sp^1$ described in Figure~\ref{fig:running-example} for $n\geq 3$.
We take a cellular decomposition of $\Sp^1$ whose vertex set is $\{e^{2\pi i m/n\,:\,m=0,\dots,n-1}\}$ and edge set $\{(e^{2\pi i m/n,2\pi i (m+1)/n)\,:\,m=0,\dots,n-1}\}$.
Observe that $K$ is compact and paracompact, so locally, the sheaf over $\Sp^1$ is determined by taking the reduced homology of the following spaces and maps.
\[
\begin{tikzpicture}
\node[shape=circle,fill=gray,inner sep=0pt,minimum size=0.15cm] (linf) at (-3,3) { };
\node[shape=circle,fill=black,inner sep=0pt,minimum size=0.15cm] (lv1) at (-3,2.5) { };
\node[shape=circle,fill=black,inner sep=0pt,minimum size=0.15cm] (lv2) at (-3,2) { };
\node[shape=circle,fill=black,inner sep=0pt,minimum size=0.15cm] (lv3) at (-3,1.5) { };

\node[shape=circle,fill=gray,inner sep=0pt,minimum size=0.15cm] (minf) at (0,2.5) { };

\node[shape=circle,fill=gray,inner sep=0pt,minimum size=0.15cm] (rinf) at (3,3) { };
\node[shape=circle,fill=black,inner sep=0pt,minimum size=0.15cm] (rv1) at (3,2.5) { };
\node[shape=circle,fill=black,inner sep=0pt,minimum size=0.15cm] (rv2) at (3,2) { };
\node[shape=circle,fill=black,inner sep=0pt,minimum size=0.15cm] (rv3) at (3,1.5) { };

\node (tlmap) at (-1.4,2.25) {\Huge $\twoheadrightarrow$};
\node (trmap) at (1.4,2.25) {\Huge $\twoheadleftarrow$};

\draw[red,line width=0.3mm] (linf) to[bend right = 30] (lv1);
\draw[red,line width=0.3mm] (linf) to[bend right = 70] (lv1);
\draw[red,line width=0.3mm] (linf) to[bend right = 80] (lv2);
\draw[red,line width=0.3mm] (linf) to[bend right = 90] (lv3);
\draw[blue,line width=0.3mm] (linf) to[bend left = 30] (lv1);
\draw[blue,line width=0.3mm] (linf) to[bend left = 70] (lv1);
\draw[blue,line width=0.3mm] (linf) to[bend left = 80] (lv2);
\draw[blue,line width=0.3mm] (linf) to[bend left = 90] (lv3);

\draw[blue,line width=0.3mm] (minf) to[out=300,in=240,looseness=12] (minf);
\draw[blue,line width=0.3mm] (minf) to[out=310,in=230,looseness=18] (minf);
\draw[blue,line width=0.3mm] (minf) to[out=320,in=220,looseness=24] (minf);
\draw[blue,line width=0.3mm] (minf) to[out=330,in=210,looseness=30] (minf);

\draw[blue,line width=0.3mm] (rinf) to[bend right = 30] (rv1);
\draw[blue,line width=0.3mm] (rinf) to[bend right = 70] (rv1);
\draw[blue,line width=0.3mm] (rinf) to[bend right = 80] (rv2);
\draw[blue,line width=0.3mm] (rinf) to[bend right = 90] (rv3);
\draw[green,line width=0.3mm] (rinf) to[bend left = 30] (rv1);
\draw[green,line width=0.3mm] (rinf) to[bend left = 70] (rv1);
\draw[green,line width=0.3mm] (rinf) to[bend left = 80] (rv2);
\draw[green,line width=0.3mm] (rinf) to[bend left = 90] (rv3);

\end{tikzpicture}
\]
The left- and right-most spaces are the (one point compactification of the) pullback of the star of a vertex and the centre space is the (one point compactification of the) pullback of an edge.
In particular, the maps are defined so that colours are preserved and otherwise everything maps to the compactification point (in gray).
With an appropriate choice of basis, this means the degree 1 sheaf is made of repeated copies of the following diagram.
In particular, $\Fsheaf_1$ is an episheaf.
\[
\begin{tikzcd}[ampersand replacement=\&,column sep = 6em]
\cdots 
\& \F^5
	\arrow[l]
	\arrow[r,"{\scalemath{\scalesize}{\begin{bsmallmatrix} 1&0&1&0&0\\0&0&1&0&0\\0&0&0&0&1\\0&0&0&1&0 \end{bsmallmatrix}}}" description]
\& \F^4
\& \F^5
	\arrow[l,swap,"{\scalemath{\scalesize}{\begin{bsmallmatrix} 1&1&0&0&0\\0&0&0&1&0\\0&1&0&0&0\\0&0&0&0&1 \end{bsmallmatrix}}}" description]
	\arrow[r,"{\scalemath{\scalesize}{\begin{bsmallmatrix} 1&0&1&0&0\\0&0&1&0&0\\0&0&0&0&1\\0&0&0&1&0 \end{bsmallmatrix}}}" description]
\& \F^4
\& \cdots
	\arrow[l]
\end{tikzcd}
\]
\end{example}

\begin{definition}
Given a sheaf $\Fsheaf$, we say that $\Esheaf$ is a \textit{sub-episheaf} if $\Esheaf$ is an episheaf and there is a monomorphism $\Esheaf\inj\Fsheaf$ in $\sheafcat(X)$.
\end{definition}
For any sheaf $\Fsheaf$, the zero-sheaf $\sheaf{0}$ consisting of trivial objects and trivial morphisms is always a sub-episheaf.
In fact, it is the \textit{smallest} sub-episheaf of $\Fsheaf$.
Conversely, given two sub-episheaves $\Esheaf_1\inj\Fsheaf$ and $\Esheaf_2\inj\Fsheaf$ one can construct another sub-episheaf $\Esheaf_3\inj\Fsheaf$ satisfying $\Esheaf_1\inj\Esheaf_3\hookleftarrow\Esheaf_2$; this is shown explicitly in \cite{macpherson2021persistent} for the case of group-valued bisheaves, but in Appendix~\ref{sec:PLS-lemmas} we present a general construction for abelian categories.
Then via a standard application of Zorn's Lemma, one may also attribute a \textit{maximal} sub-episheaf to $\Fsheaf$.
The construction of a maximal sub-episheaf is known as \textit{epification}.
Epification is functorial, i.e. there exists a functor $\mathbf{Epi}:\sheafcat(X)\to\sheafcat(X)$ which assigns to every sheaf its maximal sub-episheaf, and we note that there is a canonical inclusion natural transformation $\sheaf{\eta}:\mathbf{Epi}\Rightarrow \mathbf{id}_{\sheafcat(X)}$.
Appendix~A of \cite{macpherson2021persistent} details how one can construct the epification of a sheaf, however in Section~\ref{sec:isobisheaf} we will introduce an explicit epification algorithm.

\begin{definition}
A (cellular) cosheaf $\Fcosheaf$ under $X$ valued in $\Ab$ is a covariant functor from the poset category of $X$ to $\Ab$.
If, in addition, $\Fcosheaf(\sigma\leq\tau):\Fcosheaf(\sigma)\to\Fcosheaf(\tau)$ is a monomorphism for every face relation $\sigma\leq\tau$ then we say $\Fcosheaf$ is an monocosheaf.
\end{definition}

Dualising the notion of $\sheafcat(X)$, we denote by $\cosheafcat(X)$ the (abelian) category of cellular cosheaves under $X$.
For the family of cosheaves with which we will work, consider again a cell map $f:Y\to X$.
We can construct a cosheaf analogous to the previous sheaf construction procedure with ordinary homology instead of relative homology.
To every cell $\sigma$ of $X$, we attribute the homology
\[
\Fcosheaf_\bullet(\sigma) = \Hg_\bullet\left(f^{-1}(\St(\sigma))\right).
\]
In addition, the embeddings $\St(\sigma)\inj\St(\tau)$ for each $\sigma\leq\tau$ induce the maps
\[
\Fcosheaf_\bullet(\sigma\leq\tau):\Hg_\bullet\left(f^{-1}(\St(\sigma))\right)\to\Hg_\bullet\left(f^{-1}(\St(\tau))\right),
\]
completing the cosheaf construction.

\begin{example}
Consider again the map $K/n\Z\to \Sp^1$ described in Figure~\ref{fig:running-example} for $n\geq 3$.
Locally, the homology cosheaf under $\Sp^1$ is determined by taking the homology of the following spaces and inclusion maps.
\[
\begin{tikzpicture}
\node (tlmap) at (-1.75,1) {\Huge $\hookrightarrow$};
\node (trmap) at (1.75,1) {\Huge $\hookleftarrow$};

\draw[red!40,line width=0.3mm] (-3.4,0.2) to (-2.6,1.8);
\draw[red,line width=0.3mm] (-3.4,2) to (-2.6,2);
\draw[red,line width=0.3mm] (-3.4,1.9) to (-2.6,1.1);
\draw[red,line width=0.3mm] (-3.4,0.9) to (-2.6,0.1);

\draw[red!40,line width=0.3mm] (-0.9,0.2) to (0,2);
\draw[red,line width=0.3mm] (-0.9,2) to (0,2);
\draw[red,line width=0.3mm] (-0.9,1.9) to (0,1);
\draw[red,line width=0.3mm] (-0.9,0.9) to (0,0);
\draw[blue!40,line width=0.3mm] (0,0) to (0.9,1.8);
\draw[blue,line width=0.3mm] (0,2) to (0.9,1.1);
\draw[blue,line width=0.3mm] (0,2) to (0.9,2);
\draw[blue,line width=0.3mm] (0,1) to (0.9,0.1);

\draw[blue!40,line width=0.3mm] (2.6,0.2) to (3.4,1.8);
\draw[blue,line width=0.3mm] (2.6,2) to (3.4,2);
\draw[blue,line width=0.3mm] (2.6,1.9) to (3.4,1.1);
\draw[blue,line width=0.3mm] (2.6,0.9) to (3.4,0.1);

\node[shape=circle,fill=black,inner sep=0pt,minimum size=0.15cm] (v1) at (0,2) { };
\node[shape=circle,fill=black,inner sep=0pt,minimum size=0.15cm] (v2) at (0,1) { };
\node[shape=circle,fill=black,inner sep=0pt,minimum size=0.15cm] (v3) at (0,0) { };

\end{tikzpicture}
\]
The left- and right-most spaces are the pullback of the star of a vertex and the centre space is the pullback of an edge, and the maps are natural inclusions.
With an appropriate choice of basis, this means the degree 0 cosheaf is made of repeated copies of the following diagram.
In particular, $\Fcosheaf_0$ is \textit{not} a monocosheaf.
\[
\begin{tikzcd}[ampersand replacement=\&,column sep = 6em]
\cdots 
	\arrow[r]
\& \F^3
\& \F^4
	\arrow[l,swap,"{\scalemath{\scalesize}{\begin{bsmallmatrix} 1&1&0&0\\0&0&0&1\\0&0&1&0 \end{bsmallmatrix}}}" description]
	\arrow[r,"{\scalemath{\scalesize}{\begin{bsmallmatrix} 1&0&1&0\\0&1&0&0\\0&0&0&1 \end{bsmallmatrix}}}" description]
\& \F^3
\& \F^4
	\arrow[l,swap,"{\scalemath{\scalesize}{\begin{bsmallmatrix} 1&1&0&0\\0&0&0&1\\0&0&1&0 \end{bsmallmatrix}}}" description]
\& \cdots
	\arrow[l]
\end{tikzcd}
\]
\end{example}

By dualising the notion of epification, one obtains a similar notion for cosheaves.

\begin{definition}
Given a cosheaf $\Fcosheaf$, we say that $\Mcosheaf$ is a \textit{quotient-monocosheaf} if $\Mcosheaf$ is a monocosheaf and $\Fcosheaf\surj\Mcosheaf$.
\end{definition}

For any cosheaf $\Fcosheaf$, the zero-cosheaf $\cosheaf{0}$ is always the \textit{largest}\footnote{In this case, \textit{largest} refers to the fact $\cosheaf{0}\cong \Fcosheaf/\Kcosheaf$ for $\Kcosheaf$ maximal with the property that $\Fcosheaf/\Kcosheaf$ is a monocosheaf.} quotient-monocosheaf of $\Fcosheaf$.
Dual to the case for sheaves, given two quotient-monocosheaves $\Fcosheaf\surj\Mcosheaf_1$ and $\Fcosheaf\surj\Mcosheaf_2$ one can always construct a smaller quotient-monocosheaf $\Fcosheaf\surj\Mcosheaf_3$ satisfying $\Mcosheaf_1\twoheadleftarrow\Mcosheaf_3\surj\Mcosheaf_2$, and an application of Zorn's Lemma proves the existence of a \textit{minimal} quotient-monocosheaf of $\Fcosheaf$.
Again, in Appendix~\ref{sec:PLS-lemmas} we provide a general construction of $\Mcosheaf_3$ for cosheaves valued in abelian categories.
The construction of a minimal quotient-monocosheaf is known as \textit{monofication}.
Monofication is also functorial, i.e. there exists a functor $\mathbf{Mono}:\cosheafcat(X)\to\cosheafcat(X)$ which assigns to every sheaf its maximal sub-episheaf, and we note that there is a canonical quotient natural transformation $\cosheaf{\eta}:\mathbf{id}_{\cosheafcat(X)}\Rightarrow\mathbf{Mono}$.
Appendix~B of \cite{macpherson2021persistent}  details how one can construct the monofication of a cosheaf, however in Section~\ref{sec:isobisheaf} we will introduce an explicit monofication algorithm.



\begin{definition}
A \textit{(co)local system} is a locally constant (co)sheaf.
That is, a (co)local system is a sheaf $\Fsheaf$ (respectively, cosheaf $\Fcosheaf$) with the property that every morphism $\Fsheaf(\sigma\leq\tau):\Fsheaf(\tau)\to\Fsheaf(\sigma)$ ($\Fcosheaf(\sigma\leq\tau):\Fcosheaf(\sigma)\to\Fcosheaf(\tau)$) is an isomorphism.
\end{definition}

We denote by $\localsystemcat(X)$ and $\colocalsystemcat(X)$ the abelian category of local systems and colocal systems (respectively) of $X$.
$\localsystemcat(X)$ is a full subcategory of $\sheafcat(X)$ and $\colocalsystemcat(X)$ is a full subcategory of $\cosheafcat(X)$.
We have that $\localsystemcat(X)$ and $\colocalsystemcat(X)$ are isomorphic (abelian) categories.
For each local system $\Fsheaf$, the isomorphism assigns the colocal system $\Fcosheaf$ defined so that $\Fcosheaf(\sigma)=\Fsheaf(\sigma)$ for each cell $\sigma$ and $\Fcosheaf(\sigma\leq\tau)=\Fsheaf(\sigma\leq\tau)^{-1}$ for each face relation $\sigma\leq\tau$.
For each colocal system $\Fcosheaf$, the isomorphism assigns the local system $\Fsheaf$ analogously.
Finally, as (co)local system morphisms are defined cell-wise, the isomorphism is the ``identity'' between local system morphisms and colocal system morphisms.

\begin{definition}
A (cellular) \textit{bisheaf} around $X$ is a triple $\Fbisheaf=(\Fsheaf,\Fcosheaf,F)$ where $\Fsheaf$ is a sheaf over $X$, $\Fcosheaf$ is a cosheaf under $X$ and $F=\{F_\sigma\,:\,\Fsheaf(\sigma)\to\Fcosheaf(\sigma)\}_{\sigma\in X}$ is a set of morphisms which satisfy the following commutative diagram for every $\sigma\leq \tau$.
\[
\begin{tikzpicture}[thick]
\node (sheaft) at (0,2) { $\Fsheaf(\tau)$};
\node (sheafs) at (4,2) { $\Fsheaf(\sigma)$};
\node (cosheaft) at (0,0) {  $\Fcosheaf(\tau)$};
\node (cosheafs) at (4,0) { $\Fcosheaf(\sigma)$};

\path
(sheaft) edge[->] node [above]  { $\Fsheaf(\sigma\leq\tau)$} (sheafs)
(cosheafs) edge[->] node [above] { $\Fcosheaf(\sigma\leq\tau)$} (cosheaft)
(sheaft) edge[->] node [right]  { $F_\tau$} (cosheaft)
(sheafs) edge[->] node [right]  { $F_\sigma$} (cosheafs);
\end{tikzpicture}
\]
\end{definition}

Consider a cell map $f:Y\to X$ and assume, in addition, that $X$ is the cellulation of a connected, oriented $d$-dimensional manifold.
We construct a bisheaf $\Fbisheaf$ by first assigning $\Fsheaf$ to be the relative homology sheaf, and $\Fcosheaf$ to be the homology cosheaf constructed earlier.
We will then use cap products to construct morphisms $F_\sigma:\Fsheaf(\sigma)\to\Fcosheaf(\sigma)$.

The orientation of $X$ is determined by a generator of the 1-dimensional vector space $\Hg^d_c(X)\cong\F$, where $\Hg^d_c(X)$ is the compactly support cohomology of $X$
\[
\Hg_c^d(X) = \lim_{\mathcal{K}\subseteq X\text{ compact}}\Hg^d(X,X\setminus\mathcal{K}).
\]
Note that if $X$ is compact then $\Hg_c^d(X)\cong\Hg^d(X)$.
Choose $\varphi$ which generates $\Hg^d_c(X)$.
For every cell $\sigma$ of $X$, the inclusion $i^\sigma:\St(\sigma)\inj X$ induces an isomorphism $(i^\sigma)_*:\Hg^d_c(\St(\sigma))\xrightarrow{\cong} \Hg^d_c(X)$.
Set $\varphi^\sigma = (i^\sigma)_*^{-1}\varphi$ so that $\varphi^\sigma$ generates $\Hg^d_c(\St(\sigma))$ and take the pullback $f^*(\varphi^\sigma)=\varphi^\sigma\circ f\in\Hg_c^d\left(f^{-1}(\St(\sigma)\right)$.
Finally, we apply the cap product
\[
\Hg_{\bullet+d}\left(Y,Y\setminus f^{-1}(\St(\sigma))\right)\otimes \Hg_c^d\left(f^{-1}(\St(\sigma))\right)\xrightarrow{\frown} \Hg_\bullet\left(f^{-1}(\St(\sigma))\right)
\]
which is well-defined by applying excision to $\Hg_c^\bullet\left(f^{-1}(\St(\sigma))\right)$ and $\Hg_\bullet\left(Y,Y\setminus f^{-1}(\St(\sigma))\right)$ as necessary.
Thus, we have a well-defined map
\[
F_\sigma=\underline{\hspace{0.3cm}}\frown[f^*(\varphi^\sigma)]:\Hg_{\bullet+d}\left(Y,Y\setminus f^{-1}(\St(\sigma))\right)\to\Hg_\bullet\left(f^{-1}(\St(\sigma))\right)
\]
This completes a canonical construction for a bisheaf attributed to $f:Y\to X$.

\begin{example}
Once more, consider the map $K/n\Z\to \Sp^1$ described in Figure~\ref{fig:running-example} for $n\geq 3$.
One has $\Hg^1(\Sp^1)\cong\F$ and fixing $1$ as the multiplicative identity of $\F$, we choose a generator $[\varphi]$ of $\Hg^1(\Sp^1)$ such that for every cell $\sigma$, the pullback $\varphi^\sigma$ evaluates to $1$ on some compact interval along $\St(\sigma)$ and 0 otherwise.
Without loss of generality, for the vertex $v_m=e^{2\pi im/n}$ we may set $\varphi_{v_m} = \chi_{\{e^{2\pi i\alpha/n}\,:\,\alpha\in[m+1/3,m+2/3]\}}$ (where $\chi_A$ is the indicator function of $A$) and for $e_m=(e^{2\pi im/n},e^{2\pi i(m+1)/n})$ we set $\varphi_{e_m}=\varphi_{v_m}$.
Pulling back to $K/n\Z$ and applying the cap product to relative homology, this means cell-wise that
\begin{itemize}
    \item all edges above $\St(e_m)$ map to their corresponding connected component (after removing the one point compactification)
    \item all edges to the left of the vertices above $\St(v_m)$ map to 0
    \item all edges to the right of the vertices above $\St(v_m)$ map to their corresponding connected component (after removing the one point compactification).
\end{itemize}
One extends this linearly to maps $\Fsheaf_1(\sigma)\to\Fcosheaf_0(\sigma)$ for each cell $\sigma$, which locally locally looks like the following bisheaf with the choices of bases as in the previous examples.
\[
\begin{tikzcd}[ampersand replacement=\&,column sep = 6em, row sep=3em]
\cdots 
\& \F^5
	\arrow[l]
	\arrow[r,"{\scalemath{\scalesize}{\begin{bsmallmatrix} 1&0&1&0&0\\0&0&1&0&0\\0&0&0&0&1\\0&0&0&1&0 \end{bsmallmatrix}}}" description]
	\arrow[d,"{\scalemath{\scalesize}{\begin{bsmallmatrix} 1&0&0&0&0\\0&0&0&1&0\\0&0&0&0&1 \end{bsmallmatrix}}}" description]
\& \F^4
	\arrow[d,"{id}" description]
\& \F^5
	\arrow[l,swap,"{\scalemath{\scalesize}{\begin{bsmallmatrix} 1&1&0&0&0\\0&0&0&1&0\\0&1&0&0&0\\0&0&0&0&1 \end{bsmallmatrix}}}" description]
	\arrow[r,"{\scalemath{\scalesize}{\begin{bsmallmatrix} 1&0&1&0&0\\0&0&1&0&0\\0&0&0&0&1\\0&0&0&1&0 \end{bsmallmatrix}}}" description]
	\arrow[d,"{\scalemath{\scalesize}{\begin{bsmallmatrix} 1&0&0&0&0\\0&0&0&1&0\\0&0&0&0&1 \end{bsmallmatrix}}}" description]
\& \F^4
	\arrow[d,"{id}" description]
\& \cdots
	\arrow[l]
\\
\cdots 
	\arrow[r]
\& \F^3
\& \F^4
	\arrow[l,swap,"{\scalemath{\scalesize}{\begin{bsmallmatrix} 1&1&0&0\\0&0&0&1\\0&0&1&0 \end{bsmallmatrix}}}" description]
	\arrow[r,"{\scalemath{\scalesize}{\begin{bsmallmatrix} 1&0&1&0\\0&1&0&0\\0&0&0&1 \end{bsmallmatrix}}}" description]
\& \F^3
\& \F^4
	\arrow[l,swap,"{\scalemath{\scalesize}{\begin{bsmallmatrix} 1&1&0&0\\0&0&0&1\\0&0&1&0 \end{bsmallmatrix}}}" description]
	\arrow[r]
\& \cdots
\end{tikzcd}
\]
\end{example}

We denote by $\bisheafcat(X)$ the category of cellular bisheaves around $X$.
Morphisms $\varphi:\Fbisheaf\to\bisheaf{G}$ of $\bisheafcat(X)$ consist of a sheaf map $\sheaf{\varphi}:\Fsheaf\to\sheaf{G}$ and a cosheaf map $\cosheaf{\varphi}:\Fcosheaf\to\cosheaf{G}$ which commute with the bisheaf structures.
$\bisheafcat(X)$ is an abelian category, as it inherits many of the properties of $\sheafcat(X)$ and $\cosheafcat(X)$.

\begin{remark}
Our definition of $\bisheafcat(X)$ differs from the original definition in \cite{macpherson2021persistent}, where $\varphi:\Fbisheaf\to\bisheaf{G}$ was said to consist of consist of a sheaf map $\sheaf{\varphi}:\Fsheaf\to\sheaf{G}$ and a cosheaf map $\cosheaf{\varphi}:\cosheaf{G}\to\Fcosheaf$.
We make this change so that $\bisheafcat(X)$ can be an abelian category with our definiton, which is not the case with the previous definition, since such a category does not contain pushouts.
\end{remark}

\begin{definition}[\cite{macpherson2021persistent}]
Let $\Fbisheaf=(\Fsheaf,\Fcosheaf,F)$ be a bisheaf. $\Fbisheaf$ is an \textit{isobisheaf} if $\Fsheaf$ is an episheaf and $\Fcosheaf$ is a monocosheaf.
\end{definition}


Given a bisheaf $\Fbisheaf=(\Fsheaf,\Fcosheaf,F)$, after epifying $\Fsheaf$ and monofying $\Fcosheaf$, we obtain an isobisheaf $\Ibisheaf=(\Esheaf,\Mcosheaf,I)$ which satisfies the following canonical commutative diagram.
\[
\begin{tikzpicture}[scale=0.9]
\node (E) at (0,5.4) {$\Esheaf(\tau)$};
\node (Fup) at (0,3.6) {$\Fsheaf(\tau)$};
\node (Fdo) at (0,1.8) {$\Fcosheaf(\tau)$};
\node (M) at (0,0) {$\Mcosheaf(\tau)$};
\node (Ep) at (3,5.4) {$\Esheaf(\sigma)$};
\node (Fpup) at (3,3.6) {$\Fsheaf(\sigma)$};
\node (Fpdo) at (3,1.8) {$\Fcosheaf(\sigma)$};
\node (Mp) at (3,0) {$\Mcosheaf(\sigma)$};

\draw [right hook->] (E) -- (Fup);
\draw [right hook->] (Ep) -- (Fpup);

\path[every node/.style={font=\sffamily\small}]
(E) edge[->>] node [above] {\footnotesize $\Esheaf(\sigma\leq\tau)$} (Ep)
(Fup) edge[->] node [above] {\footnotesize $\Fsheaf(\sigma\leq\tau)$} (Fpup)
(Fpdo) edge[->] node [above] {\footnotesize $\Fcosheaf(\sigma\leq\tau)$} (Fdo)
(Mp) edge[left hook->] node [above] {\footnotesize $\Mcosheaf(\sigma\leq\tau)$} (M)
(Fdo) edge[->>] (M)
(Fpdo) edge[->>] (Mp)
(Fup) edge[->] node [right]  {\footnotesize $F_\tau$} (Fdo)
(Fpup) edge[->] node [left]  {\footnotesize $F_\sigma$} (Fpdo)
(E) edge[->,bend right = 40] node [left] {\footnotesize $I_\tau$} (M)
(Ep) edge[->,bend left = 40] node [right] {\footnotesize $I_\sigma$} (Mp);
\end{tikzpicture}
\]
The process of obtaining $\Ibisheaf$ from $\Fbisheaf$ is known as \textit{isobisheafification}.

Another important property of an isobisheaf $\Ibisheaf$ that we will use extensively throughout this paper is that the image of $I$ is a well-defined colocal system which embeds into $\Mcosheaf$.

\begin{lemma}\label{lem:colocal}
Let $\Ibisheaf = (\Esheaf,\Mcosheaf,I)$ be an isobisheaf valued in an abelian category $\Ab$.
Then image of $I$ defines a colocal system which we call the \emph{persistent local system} of $\Ibisheaf$.
\end{lemma}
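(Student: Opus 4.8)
The plan is to construct the image of $I$ cellwise and then verify that the inherited structure maps are isomorphisms. For each cell $\sigma$ set $(\im I)(\sigma) := \im(I_\sigma)$, regarded as a subobject of $\Mcosheaf(\sigma)$ via the canonical monomorphism $\im(I_\sigma)\inj\Mcosheaf(\sigma)$, and take the structure maps to be the restrictions of the cosheaf maps $\Mcosheaf(\sigma\leq\tau)$. Two things then need checking: that these restrictions are well defined, so that $\im I$ is genuinely a subcosheaf of $\Mcosheaf$, and that each one is an isomorphism, so that $\im I$ is locally constant, i.e.\ a colocal system. The embedding into $\Mcosheaf$ is then immediate from the construction.

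First I would rewrite the bisheaf commutativity for $\sigma\leq\tau$ as
\[
I_\tau \;=\; \Mcosheaf(\sigma\leq\tau)\circ I_\sigma\circ\Esheaf(\sigma\leq\tau).
\]
Because $\Esheaf$ is an episheaf, $\Esheaf(\sigma\leq\tau)$ is an epimorphism, and in any abelian category precomposition by an epimorphism leaves the image of a morphism unchanged; hence $\im\bigl(I_\sigma\circ\Esheaf(\sigma\leq\tau)\bigr)=\im(I_\sigma)$ as subobjects of $\Mcosheaf(\sigma)$. Substituting into the displayed identity yields the key equality
\[
\im(I_\tau)\;=\;\im\bigl(\Mcosheaf(\sigma\leq\tau)\circ I_\sigma\bigr).
\]
This already does two jobs at once: it shows that $\Mcosheaf(\sigma\leq\tau)$ sends $\im(I_\sigma)$ into $\im(I_\tau)$, so the restriction exists and $\im I$ is a well-defined subcosheaf of $\Mcosheaf$; and it identifies $\im(I_\tau)$ with the image of the subobject $\im(I_\sigma)$ under $\Mcosheaf(\sigma\leq\tau)$.

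It remains to promote this to an isomorphism. Since $\Mcosheaf$ is a monocosheaf, $\Mcosheaf(\sigma\leq\tau)$ is a monomorphism, and a monomorphism restricts to an isomorphism from any subobject onto the image of that subobject. Applied to the subobject $\im(I_\sigma)$ together with the key equality above, this shows that the induced structure map $(\im I)(\sigma\leq\tau)\colon\im(I_\sigma)\to\im(I_\tau)$ is an isomorphism. Functoriality of these restrictions is inherited directly from that of $\Mcosheaf$, so $\im I$ is a cosheaf all of whose face maps are isomorphisms, which is exactly a colocal system. I expect the only genuine subtlety to be resisting the temptation to argue with elements: the two facts carrying the proof—that an epimorphism on the right is absorbed into the image, and that a monomorphism on the left restricts to an isomorphism onto images—are precisely the abelian-categorical statements needed so that the argument goes through for arbitrary $\Ab$ rather than only for $\vect$.
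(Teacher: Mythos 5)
Your proof is correct, and both of the categorical facts it leans on are valid in any abelian category: precomposition by an epimorphism preserves images because $\coker(I_\sigma\circ\Esheaf(\sigma\leq\tau))=\coker(I_\sigma)$, and a monomorphism restricts to an isomorphism onto the image of a subobject because the epic part of the image factorization of a monic map is both monic and epic, hence invertible. Your route is, however, organized differently from the paper's. The paper never constructs the covariant structure map directly: it uses the universal property of the image factorization $I_\tau=\iota_\tau\circ\tilde{I}_\tau$ to produce a unique \emph{monomorphism} $f\colon\local(\tau)\inj\local(\sigma)$ in the contravariant direction, proves $f$ is also epic by cancelling the epimorphisms $\Esheaf(\sigma\leq\tau)$ and $\tilde{I}_\sigma$ against test morphisms, invokes the fact that monic plus epic implies isomorphism, and only then defines $\local(\sigma\leq\tau):=f^{-1}$. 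You instead build $\local(\sigma\leq\tau)$ covariantly as the restriction of $\Mcosheaf(\sigma\leq\tau)$ to $\im(I_\sigma)$, with all the work carried by the single equality $\im(I_\tau)=\im\bigl(\Mcosheaf(\sigma\leq\tau)\circ I_\sigma\bigr)$, which settles well-definedness and invertibility at once. What your packaging buys: the structure maps of $\local$ are visibly restrictions of those of $\Mcosheaf$, so the fact that the PLS is a sub-monocosheaf of $\Mcosheaf$ (which the paper states separately after the lemma) is immediate from the construction. What the paper's packaging buys: by phrasing everything through universal properties and the single axiom ``monic and epic implies isomorphism,'' it makes explicit the minimal hypotheses under which the lemma survives outside abelian categories (existence of images and equalizers), e.g.\ for $\mathbf{Set}$-valued bisheaves; your two lemmas also hold in $\mathbf{Set}$, so your argument generalizes just as well, but this generality is less apparent from how you stated it.
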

This is proved in \cite[Proposition~5.6]{macpherson2021persistent} for the case that the persistent local system is valued in a category induced by additional structure on sets (e.g. groups, modules, vector spaces).
In Appendix~\ref{sec:PLS-lemmas} we present a concise proof in full generality for abelian categories.

The persistent local system $\local$ of an isobisheaf is a sub-monocosheaf of $\Mcosheaf$.
Dually, if we were to take the coimage of $F$ then we would obtain a quotient-epicosheaf of $\Esheaf$.
This corresponding local system is exactly that which one obtains via the isomorphism of categories $\colocalsystemcat(X)\cong\localsystemcat(X)$.
Observe also that isobisheafification is a functor $\bisheafcat(X)\to\mathbf{iso}\bisheafcat(X)$ from bisheaves to the full subcategory of isobisheaves, and every PLS is an object in the image of a canonical functor $\mathbf{iso}\bisheafcat(X)\to\colocalsystemcat(X)$.
The composition of these functors is then a canonical functor $\bisheafcat(X)\to\colocalsystemcat(X)$ assigning to every (not necessarily iso-) bisheaf $\Fbisheaf$ a persistent local system $\local$.

\subsection{Periodic Spaces}

\begin{definition}\label{def:periodic}
$K$ is a $d$-periodic complex if it is endowed with a free action of $\Z^d$.
\end{definition}

For the remainder of this paper, we assume $K$ is a locally compact, paracompact $d$-periodic complex.

We denote by $T\subset \mathrm{Aut}(K)$ the translation group of automorphisms represented by this action and note that the action of $\Z^d$ on $K$ need not be maximal or obey any specific requirements.
Our aim is to study the homology of $K$ via its quotient space $G=K/\sim$, where $\sim$ is the relation $x\sim y$ if and only if $y=\tran(x)$ for some $\tran\in T$.
For a more thorough introduction to periodic spaces and their quotient spaces, we direct the reader to \cite{onus2022quantifying} and \cite{onus2023computing}.

If $K$ is embedded in some Euclidean space $\R^n$ for $n\geq d$ such that $T$ extends to translations on $\R^n$, then $q$ extends to a map $q:\R^n\surj \torus^d\times\R^{n-d}$, where $\torus^d=(\R/\Z)^d$ is the $d$-dimensional torus.
In this way, one has a canonical embedding of $G$ in $\torus^d\times\R^{n-d}$.
One then links this to bisheaf theory by composing with the natural projection map $\torus^d\times\R^{n-d}\surj\torus^d$ to obtain a canonical map $\pi:G\to\torus^d$ as outlined in the following commutative diagram.
\begin{equation}
\label{diag:projections}
\begin{tikzcd}[column sep={2cm,between origins},row sep={1cm,between origins}]
         K & \R^n & \\
         & & \torus^d\\
         G & \torus^d\times\R^{n-d} & \\
        \arrow[hook, from=1-1, to=1-2]
        \arrow[hook, dashed, from=3-1, to=3-2]
        \arrow[two heads, "q", from=1-1, to=3-1]
        \arrow[two heads, "q", from=1-2, to=3-2]
        \arrow[two heads, from=3-2, to=2-3]
        \arrow[dashed, from=1-2, to=2-3]
\end{tikzcd}
\end{equation}
\begin{remark}
In practice, we do not need to assume that $K\inj\R^n$ is an embedding, as long as the commutative diagram is satisfied.
For example, if $K$ is a Vietoris-Rips complex of a periodic point cloud, the map $K\to\R^n$ sending $K$ to its geometric realization satisfies this property and all of the following results will still apply.
\end{remark}
For ease of notation, we now write $\R/\Z=\Sp^1$ to denote the circle.
As $\torus^d=\Sp^1\times\Sp^1\times\cdots\times\Sp^1$, the $d$-torus is equipped with many natural projections onto subsets of it's coordinates.
Specifically, for every non-empty subset $L\subseteq\{1,2,\dots,d\}$ with $\ell=|L|$, there is a natural projection $\rho_L:\torus^d\to\torus^\ell$ obtained by projecting onto the copies of $\Sp^1$ indicated by $L$.
These projection maps satisfy the property that if $L,L'$ are disjoint (non-empty) subsets of $\{1,2,\dots,d\}$, then $\rho_{L\cup L'}=\rho_L\circ\rho_{L'}=\rho_{L'}\circ\rho_L$ (where we have slightly abused notation).
We thus have hierarchical family of maps $\pi_L:G\to\torus^\ell$.

A particular point of interest is in determining when cycles of $G$ do not represent cycles of $K$, which leads to the following definition.
\begin{definition}
A \emph{non-toroidal cycle} of $G$ is a cycle representing an element of $\Ig_\bullet:=q_*\left(\Hg_\bullet(K)\right)$, where $q_*:\Hg_\bullet(K)\to\Hg_\bullet(G)$ is the map induced by the projection $q:K\to G$.
Conversely, a \emph{toroidal cycle} of $G$ is a cycle representing a non-trivial element of $\Hg_\bullet(G)/\Ig_\bullet$ (i.e. the cokernel of $q_*$).
\end{definition}

Note that $\Hg_\bullet(K)$ will in general be infinite dimensional, which makes algebraically calculating the cokernel of $q_*$ -- and hence determining toroidal cycles -- highly non-trivial.
Geometrically, toroidal cycles arise because of the ambient toroidal topology from the embedding $G\inj \torus^d\times\R^{n-d}$.
Note, however, that toroidal cycles are not necessarily cycles of this ambient torus.
For instance, the toroidal 2-cycle of the quotient space of the cylinders in Figure~\ref{fig:toroidal-vs-non} will be a trivial cycle in $\torus^2\times\R$ but the projection of the plane will be exactly the generator of $\Hg_2(\torus^2\times\R)$.
This means that the problem of classifying toroidal cycles is subtle and much more difficult than simply finding the preimage of homology classes of the torus.

\begin{figure}
    \centering
    \includegraphics[scale=0.86]{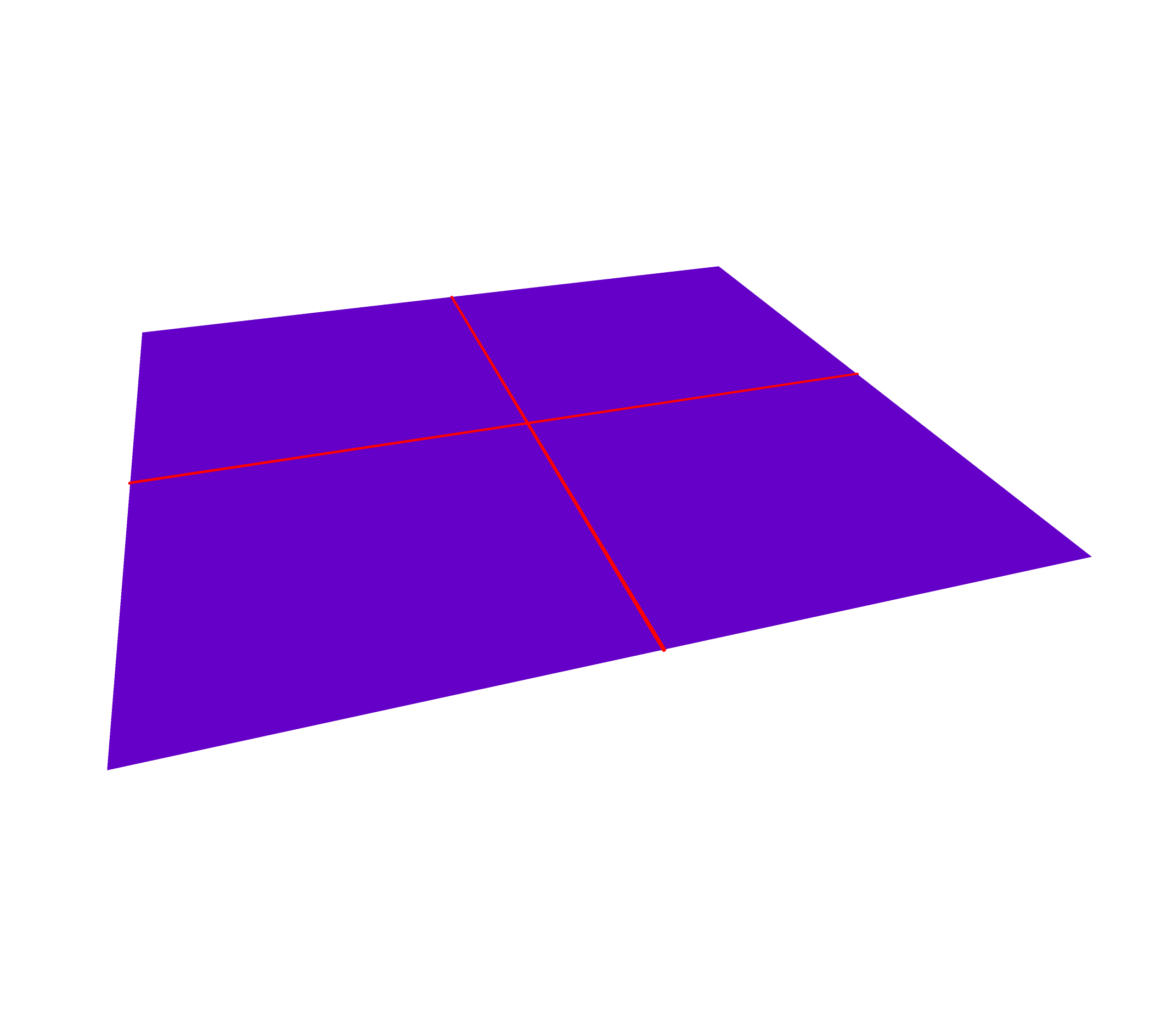}
    \includegraphics[scale=0.7]{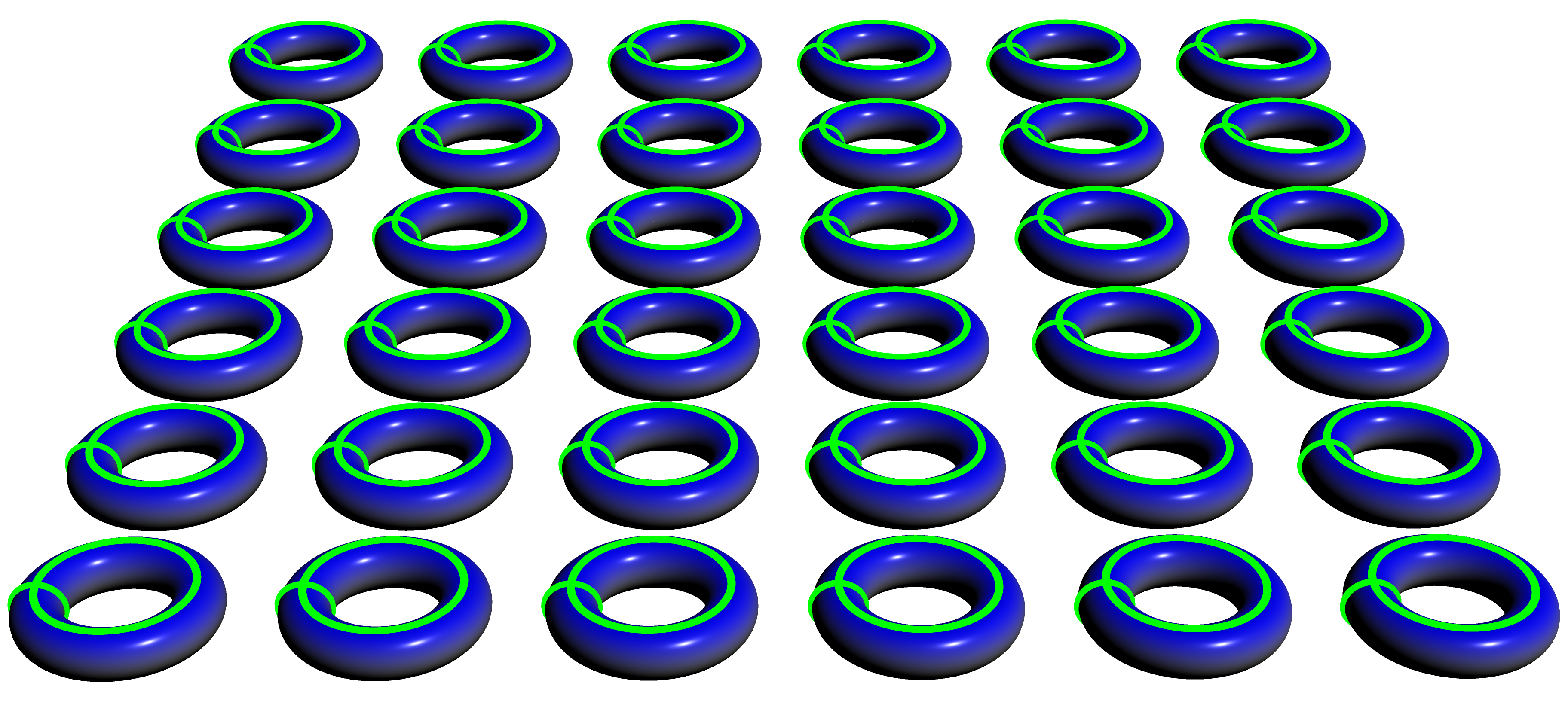}
    \includegraphics[scale=1.55]{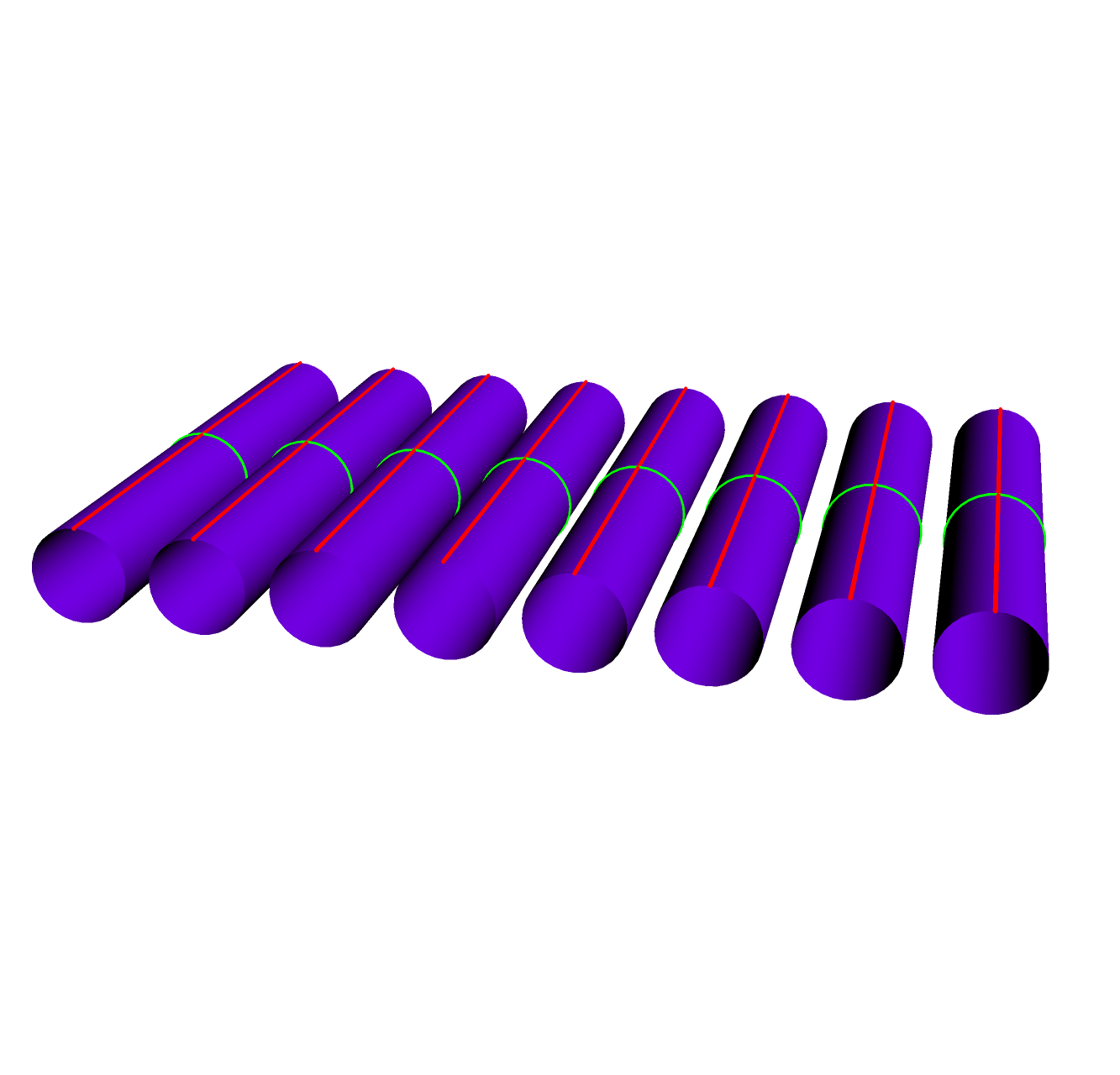}
    \caption{Three examples of 2-dimensional 2-periodic spaces (implied to extend out indefinitely) in $\R^3$ which project onto a standard 2-torus $\torus^2$ in their quotient spaces with respect to translations of $\Z^2\times 0$. Left: $\R^2\times 0$, where the red lines project onto toroidal 1-cycles and the purple surface projects onto a toroidal 2-cycle. Center: disjoint copies of $\torus^2$, where the quotient action projects the green circles onto non-toroidal 1-cycles, and the blue surfaces onto a non-toroidal 2-cycles. Right: disjoint infinite cylinders, where the quotient action projects the purple surfaces onto a toroidal 2-cycle, the red lines onto a toroidal 1-cycle, and the green circles onto a non-toroidal 1-cycle.}
    \label{fig:toroidal-vs-non}
\end{figure}

\section{Classifying Toroidal Cycles} \label{sec:toroidal}
We describe a classification for detecting toroidal cycles from finite information.
Our main results are an embedding Theorem for 1-periodic toroidal cycles (Theorem~\ref{thm:1d-toroidal}) and an embedding Theorem for the more general case of $d$-periodic toroidal cycles (Theorem~\ref{thm:toroidal}).
The section is divided into two subsections -- we first describe the 1-peoridic case, as this allows us then to build up to the more general case.


\subsection{The 1-periodic case}
In this subsection, we investigate the simplest case of a periodic space --- where the translation group is $T\cong \Z$. While this case was studied in \cite{onus2023computing} using different machinery, the resulting information is different from \cite{onus2023computing} and proves instrumental to understanding higher dimensional translation actions. Here we 
heavily rely on Algorithm~\ref{alg:epi}, Algorithm~\ref{alg:mono} and Theorem~\ref{thm:algorithms} in the proofs of Lemma~\ref{lem:unravel}, Lemma~\ref{lem:1d-nontoroidal} and Theorem~\ref{thm:1d-toroidal}. For an in-depth understanding of the proofs, the reader may wish to read Section~\ref{sec:isobisheaf} first before continuing.

Observe that for any cell $\sigma$, and any orientation $\varphi$ of $\torus^d$ with (compactly supported) pullback $\varphi^\sigma$ of $\pi^{-1}(\St(\sigma))$ we have the following commutative diagram
\[
\begin{tikzcd}[column sep={3.5cm,between origins},row sep={1.5cm,between origins}]
         {H_{\bullet+d}(G)} & {\Hg_{\bullet+d}\left(G,G\setminus f^{-1}(\St(\sigma))\right)} \\
         {H_{\bullet}(G)} & {H_{\bullet}(\pi^{-1}(\St(\sigma)))}
        \arrow["\pi^*", from=1-1, to=1-2]
        \arrow["\pi_*", from=2-1, to=2-2]
        \arrow[two heads, "\frown\varphi", from=1-1, to=2-1]
        \arrow[two heads, "\frown\varphi^\sigma", from=1-2, to=2-2]
\end{tikzcd}
\]
where horizontal maps are induced by inclusion and we recall $\Hg^d_c(X)$ denotes the compactly supported cohomology of $X$.
Let $\Fbisheaf$ be the bisheaf associated to $\pi$ and $\local(G)$ the corresponding persistent local system (PLS).
By construction, the image of $\Hg_\bullet(G)\to \Hg_\bullet\left(G,G\setminus f^{-1}(\St(\sigma))\right)$ is a sub-episheaf of $\Fsheaf$.
Dually, the coimage of $\Hg_\bullet(\pi^{-1}(\St(\sigma))) \to \Hg_\bullet(G)$ is a quotient-monocosheaf of $\Fcosheaf$, where we recall that the coimage of $T:V\to W$ in the category of vector spaces is the space $V/\ker(T)$.
Recall now that $\Fsheaf_\bullet(\sigma)=\Hg_\bullet\left(G,G\setminus f^{-1}(\St(\sigma))\right)$ and $\Fcosheaf_\bullet=H_{\bullet}(\pi^{-1}(\St(\sigma)))$, and for ease of notation in what follows set $\sigma_{\St}=\pi^{-1}(\St(\sigma))$.
This suggests the full picture of the diagram above extends to the following diagram
\begin{equation}
\begin{tikzcd}[column sep={1.4cm,between origins},row sep={1.2cm,between origins}]
	&& {\Hg_{k+d+1}(G)} &&& {\Hg_{k+d}(G)} &&& {\Hg_{k+d-1}(G)} \\
	{\rotatebox{60}{$\ddots$}} &&& {\Hg_{k+d}(G\setminus \sigma_{\St})} &&& {\Hg_{k+d-1}(G\setminus \sigma_{\St})} &&& {\rotatebox{60}{$\ddots$}} \\
	& {\Fsheaf_{k+d+1}(\sigma)} &&& {\Fsheaf_{k+d}(\sigma)} &&& {\Fsheaf_{k+d-1}(\sigma)} \\
	&& {\Hg_{k+1}(G)} &&& {\Hg_{k}(G)} &&& {\Hg_{k-1}(G)} \\
	\ddots &&& {\Hg_{k+1}(G,\sigma_{\St})} &&& {\Hg_{k}(G,\sigma_{\St})} &&& \ddots \\
	& {\Fcosheaf_{k+1}(\sigma)} &&& {\Fcosheaf_{k}(\sigma)} &&& {\Fcosheaf_{k-1}(\sigma)}
	\arrow[from=1-3, to=3-2]
	\arrow["\frown\varphi"{pos=0.7}, from=1-3, to=4-3]
	\arrow[from=1-6, to=3-5]
	\arrow["\frown\varphi"{pos=0.7}, from=1-6, to=4-6]
	\arrow[from=1-9, to=3-8]
	\arrow["\frown\varphi"{pos=0.7}, from=1-9, to=4-9]
	\arrow[from=2-1, to=1-3]
	\arrow[from=2-4, to=1-6]
	\arrow[from=2-7, to=1-9]
	\arrow[from=3-2, to=2-4,crossing over]
	\arrow["{\frown\varphi^\sigma}", from=3-2, to=6-2]
	\arrow[from=3-5, to=2-7,crossing over]
	\arrow["{\frown\varphi^\sigma}", from=3-5, to=6-5]
	\arrow[from=3-8, to=2-10,crossing over]
	\arrow["{\frown\varphi^\sigma}", from=3-8, to=6-8]
	\arrow[from=4-3, to=5-4]
	\arrow[from=4-6, to=5-7]
	\arrow[from=4-9, to=5-10]
	\arrow[from=5-1, to=6-2]
	\arrow[from=5-4, to=6-5]
	\arrow[from=5-7, to=6-8]
	\arrow[from=6-2, to=4-3]
	\arrow[from=6-5, to=4-6]
	\arrow[from=6-8, to=4-9]
 \end{tikzcd}
 \label{diag:LES}
\end{equation}
where the top zig-zag is the long exact sequence of homologies for the pair $(G,G\setminus \sigma_{\St})$ and the bottom zigzag is the long exact sequence for the pair $(G,\sigma_{\St})$.
These facts combined lead us to study toroidal and non-toroidal cycles through $\Fbisheaf$ and vice versa, and in particular by their representatives in $\local(G)$.

\begin{remark} \label{rem:epi-co}
In this paper we work with cellular bisheaves, which form a restricted class of bisheaves.
Traditionally, a pre(co)sheaf over (respectively, under) $X$ is a contravariant (covariant) functor $\Fsheaf:\mathrm{Open}(X)^\mathrm{op}\to \mathcal{A}$ ($\Fcosheaf:\mathrm{Open}(X)\to \mathcal{A}$), where $\mathcal{C}^\mathrm{op}$ is the dual category of $\mathcal{C}$ and $\mathrm{Open}(X)$ is the poset of open sets of $X$.
If, in addition, $\Fsheaf(U)$ ($\Fcosheaf(U)$) is isomorphic to the colimit (limit) of every open cover of $U$ for every $U\in \mathrm{Open}(X)$ we say that $\Fsheaf$ ($\Fcosheaf$) is a (co)sheaf.
The theory of bisheaves is defined analogously in this more general setting to the cellular setting, however the maximal episheaf of $\Fsheaf$ will explicitly be $\Esheaf(U)=\im(\Fsheaf(U\subseteq X))$ and the minimal quotient-monocosheaf of $\Fcosheaf$ will be $\coim(\Fcosheaf(U\subseteq X))$.
For the bisheaf associated to to the map $\pi:G\to \torus^d$, this means $\Esheaf_\bullet(U)=\im\left(\Hg_\bullet(G)\to\Hg_\bullet\left(G,G\setminus \sigma_{\St}\right)\right)$ and $\Mcosheaf_\bullet(U)=\coim(\Hg_\bullet(U)\to\Hg_\bullet(G))$.
However, we cannot a priori pullback/pushout a cellular (co)sheaf to all of $X$, so in the cellular setting we will only get an embedding $\im\left(\Hg_\bullet(G)\to \Hg_\bullet\left(G,G\setminus \sigma_{\St}\right)\right)\inj\Esheaf_\bullet(\sigma)$ and a projection $\Mcosheaf_\bullet(G)\surj\coim\left(\Hg_\bullet(\pi^{-1}(\St(\sigma))) \to \Hg_\bullet(G)\right)$.
For a complete treatment of cellular sheaves and cosheaves, we direct the reader to the thesis of Allen Shepherd \cite{shepard1985cellular} or the thesis of Justin Curry for a more accessible introduction \cite{curry2014sheaves}.
\end{remark}

While one may wish to simply work with simpler objects than bisheaves and isobisheafification, it turns out that this is not sufficient for detecting toroidal cycles, as we will see in the example in Section~\ref{sec:needing-sheaves}.
In this example, the obstruction for a cycle to be toroidal can be non-local, requiring the global properties of an isobisheaf.

Recall Diagram~\ref{diag:projections} and note that the canonical map $G\to \Sp^1$ for 1-periodic cell complexes lifts to a canonical map $K\to \R$ via standard covering space theory.
Choosing a cellulation of $\R$ inherited from $\Sp^1$, we gain another bisheaf $\Rbisheaf{F}=(\Rsheaf{F},\Rcosheaf{F},\dot{F})$.
We also set $\Rbisheaf{I}=(\Rsheaf{E},\Rcosheaf{M},\dot{I})$ to be the canonical isobisheaf and $\local(K)$ the canonical PLS associated to $\Rbisheaf{F}$ (where $\local(K)=\Rcosheaf{M}\cap\im(\dot{I})$).
The notation, $\Rbisheaf{F}$, indicates that this is a lift of $\Fbisheaf$ and in particular that the bisheaves are locally isomorphic.

\begin{lemma} \label{lem:unravel}
If the cellulation of $\Sp^1$ has at least two vertices then the bisheaf $\Rbisheaf{F}$ is locally isomorphic to the bisheaf $\Fbisheaf$.
That is, for any cell relations $\sigma\leq\tau$ of $\Sp^1$ and $\Tilde{\sigma}\leq\Tilde{\tau}$ of $\R$ such that $q(\Tilde{\sigma})=\sigma$ and $q(\Tilde{\tau})=\tau$
\begin{itemize}
    \item $\Rsheaf{F}(\tilde{\sigma})\cong \Fsheaf(\sigma)$ and $\Rcosheaf{F}(\tilde{\sigma})\cong \Fcosheaf(\sigma)$
    \item $\dot{F}_{\tilde{\sigma}}$ is equivalent to $F_\sigma$
    \item $\Rsheaf{F}(\Tilde{\sigma}\leq\Tilde{\tau})$ is equivalent to $\Fsheaf(\sigma\leq\tau)$ and $\Rcosheaf{F}(\Tilde{\sigma}\leq\Tilde{\tau})$ is equivalent to $\Fcosheaf(\sigma\leq\tau)$
\end{itemize}
Moreover, their associated isobisheaves and PLS's are also locally isomorphic.
\end{lemma}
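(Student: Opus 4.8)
The plan is to exploit the fact that, cell-locally, $\Rbisheaf{F}$ is the pullback of $\Fbisheaf$ along the covering $q\colon\R\to\Sp^1$, and to verify this pullback induces cell-wise isomorphisms. The guiding principle is that every piece of bisheaf data attached to a cell $\sigma$ is manufactured from the single open set $\pi^{-1}(\St(\sigma))\subseteq G$: its homology gives $\Fcosheaf(\sigma)$, the reduced homology of its one-point compactification gives $\Fsheaf(\sigma)$ (as recalled in Section~\ref{sec:prelim}), and a cap product against a compactly-supported orientation class supported near $\St(\sigma)$ gives $F_\sigma$. Hence it suffices to produce, for each lift $\tilde\sigma$ with $q(\tilde\sigma)=\sigma$, a homeomorphism $\tilde\pi^{-1}(\St(\tilde\sigma))\cong\pi^{-1}(\St(\sigma))$ that is natural in star-inclusions and respects orientations.

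First I would produce this homeomorphism. The lift $\tilde\pi\colon K\to\R$ supplied by covering-space theory is $\Z$-equivariant and fits into a commuting square with the two quotient maps $q$, and $q\colon K\to G$ is itself a covering since $T\cong\Z$ acts freely and properly discontinuously. The hypothesis that $\Sp^1$ has at least two vertices ensures that every open star $\St(\sigma)$ is a proper sub-arc, hence simply connected and evenly covered by $\R\to\Sp^1$. I then claim $q$ restricts to a homeomorphism $\tilde\pi^{-1}(\St(\tilde\sigma))\to\pi^{-1}(\St(\sigma))$. Continuity and surjectivity drop out of the commuting square together with even-coveredness; injectivity is exactly where the two-vertex hypothesis enters, since $\St(\tilde\sigma)$ lifts to a bounded open interval of length at most one period, which cannot contain two points differing by a nonzero period, so if $x$ and $t\cdot x$ both lie in $\tilde\pi^{-1}(\St(\tilde\sigma))$ then $\tilde\pi(x)$ and $\tilde\pi(x)+t$ lying in $\St(\tilde\sigma)$ force $t=0$. (With a single vertex one has $\St(\sigma)=\Sp^1$ and the argument collapses — precisely the monodromy the lemma excludes.) A continuous bijection of open subsets of a covering space that is also a local homeomorphism is a homeomorphism, completing the claim.

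The three bullet points then follow by naturality. For the cosheaf, applying $\Hg_\bullet$ to the homeomorphism gives $\Rcosheaf{F}(\tilde\sigma)\cong\Fcosheaf(\sigma)$; for the sheaf, the induced homeomorphism of one-point compactifications gives $\Rsheaf{F}(\tilde\sigma)\cong\Fsheaf(\sigma)$. The face maps $\Rsheaf{F}(\tilde\sigma\leq\tilde\tau)$ and $\Rcosheaf{F}(\tilde\sigma\leq\tilde\tau)$ are induced by $\St(\tilde\sigma)\subseteq\St(\tilde\tau)$, which corresponds under $q$ to $\St(\sigma)\subseteq\St(\tau)$, so they match their downstairs counterparts. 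Orienting $\R$ by pulling back the orientation of $\Sp^1$ makes $q$ orientation-preserving, so $\varphi^{\tilde\sigma}$ corresponds to $\varphi^\sigma$; naturality of the cap product with respect to $\tilde\pi^{-1}(\St(\tilde\sigma))\cong\pi^{-1}(\St(\sigma))$ then identifies $\dot F_{\tilde\sigma}$ with $F_\sigma$. This establishes the local isomorphism of bisheaves.

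For the associated isobisheaves and PLS's, the plan is to run Algorithms~\ref{alg:epi} and~\ref{alg:mono} on $\Rbisheaf{F}$ and $\Fbisheaf$ and compare their outputs cell by cell. By Theorem~\ref{thm:algorithms} these compute $\mathbf{Epi}$ and $\mathbf{Mono}$, and their update at a cell is determined by the stalks and restriction maps on a neighbourhood of that cell --- data we have just shown to be cell-wise isomorphic. Consequently the algorithms return isomorphic stalks $\Rsheaf{E}(\tilde\sigma)\cong\Esheaf(\sigma)$ and $\Rcosheaf{M}(\tilde\sigma)\cong\Mcosheaf(\sigma)$, hence isomorphic images $\im(\dot{I})\cong\im(I)$; by Lemma~\ref{lem:colocal} these images are colocal systems, and being locally constant they are determined by their local values, so the identification descends to $\local(K)$ and $\local(G)$. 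I expect the genuine difficulty to sit here rather than in the topology: a priori the maximal sub-episheaf is a global object, and the cyclic combinatorics of $\Sp^1$ could in principle impose constraints (true monodromy) absent from the linear complex $\R$, shrinking stalks downstairs. The crux of the argument is therefore to lean on the locality guaranteed by Theorem~\ref{thm:algorithms}, together with the two-vertex hypothesis, to verify that epification and monofication commute with the covering pullback, so that no such global obstruction alters the cell-wise values.
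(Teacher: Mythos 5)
Your proposal is correct and takes essentially the same route as the paper's own proof: covering-space theory plus the two-vertex hypothesis (which makes every star a proper, evenly covered subset) yields the cell-wise identification of the bisheaf data, and the isobisheaf/PLS claim is then obtained by tracking Algorithms~\ref{alg:epi} and~\ref{alg:mono} iteration-by-iteration, whose updates are built only from intersections, images and preimages of maps that the local isomorphisms preserve. The one detail to tighten is termination: Theorem~\ref{thm:algorithms} only guarantees the algorithms terminate over the \emph{finite} complex $\Sp^1$, so termination over the infinite complex $\R$ must be deduced (as the paper does) from the downstairs termination via the iteration-wise local isomorphism, rather than invoked directly.
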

\begin{proof}
Let $\sigma\leq\tau$ and $\Tilde{\sigma}\leq\Tilde{\tau}$ be as above.
Since $K\to G$ is a covering space it is a local isomorphism, and since $\Sp^1$ has at least two vertices guarantees $\St(\sigma)\subsetneq G$, this means $\St(\tilde{\sigma})\cong \St(\sigma)$.
Similarly, the map $\St(\tilde{\sigma})\inj\St(\tilde{\tau})$ is equivalent to $\St(\sigma)\inj\St(\tau)$.
Applying the (relative) homology functors then tells us $\Rbisheaf{F}$ and $\Fbisheaf$ are locally isomorphic\footnote{If $\Sp^1$ had a cellulation with only one vertex $v$ then $\St(v)=G$ and $\St(\tilde{v})=K\cap[0,1]\times\R^{n-1}$ so this isomorphism is no longer guaranteed}.

Now consider the Algorithm~\ref{alg:epi} and Algorithm~\ref{alg:mono}.
We initialise $\Fsheaf_0:=\Fsheaf$ and $\Rbisheaf{F}_0:=\Rbisheaf{F}$ for the Epification Algorithm and let the index $j$ denote the (co)sheaves after the $j^\mathrm{th}$ iteration of the algorithm.
Both sheaves are locally isomorphic at the initial step, so suppose that $\Fsheaf_j$ and $\Rsheaf{F}_j$ are locally isomorphic and consider $\Fsheaf_{j+1}$ and $\Rsheaf{F}_{j+1}$.
Let $e,f$ be edges of $\Sp^1$ and $v,w$ vertices such that $e\leq v$, $e\leq w$ and $f\leq v$, and similarly let $\tilde{e}\in q^{-1}(e)$, $\tilde{f}\in q^{-1}(f)$, $\tilde{v}\in q^{-1}(v)$ and $\tilde{w}\in q^{-1}(w)$ be such that $\tilde{e}\leq\tilde{v}$, $\tilde{e}\leq\tilde{w}$ and $\tilde{f}\leq\tilde{v}$.
Then Algorithm~\ref{alg:epi} says
\begin{align*}
\Fsheaf_{j+1}(v) & = \Fsheaf_j(e\leq v)^{-1}\left(\Fsheaf_j(e)\right)\cap\Fsheaf_j(f\leq v)^{-1}\left(\Fsheaf_j(f)\right) \\
\Rsheaf{F}_{j+1}(\tilde{v}) & = \Rsheaf{F}_j(\tilde{e}\leq \tilde{v})^{-1}\left(\Rsheaf{F}_j(\tilde{e})\right)\cap\Rsheaf{F}_j(\tilde{f}\leq \tilde{v})^{-1}\left(\Rsheaf{F}_j(\tilde{f})\right) \\
\Fsheaf_{j+1}(e) & = \Fsheaf_j(e\leq v)\left(\Fsheaf_{j+1}(v)\right)\cap\Fsheaf_j(e\leq w)\left(\Fsheaf_{j+1}(w)\right) \\
\Rsheaf{F}_{j+1}(\tilde{e}) & = \Rsheaf{F}_j(\tilde{e}\leq \tilde{v})\left(\Rsheaf{F}_{j+1}(\tilde{v})\right)\cap\Rsheaf{F}_j(\tilde{e}\leq \tilde{w})\left(\Rsheaf{F}_{j+1}(\tilde{w})\right) \\
\Fsheaf_{j+1}(e\leq v) & = \Fsheaf_j(e\leq v)\big|_{\Fsheaf_{j+1}(v)} \\
\Rsheaf{F}_{j+1}(\tilde{e}\leq \tilde{v}) & = \Rsheaf{F}_j(\tilde{e}\leq \tilde{v})\big|_{\Rsheaf{F}_{j+1}(\tilde{v})}
\end{align*}
Since $\Fsheaf_j$ and $\Rsheaf{F}_j$ are locally isomorphic and the above equations are all determined by intersections, restrictions and (pre)images of equivalent maps, $\Fsheaf_{j+1}$ and $\Rsheaf{F}_{j+1}$ must also be locally isomorphic.
Theorem~\ref{thm:algorithms} says the Epification Algorithm applied to $\Fsheaf$ terminates, therefore the algorithm must also terminate for $\Rsheaf{F}$.
In particular, $\Esheaf$ and $\Rsheaf{E}$ must be locally isomorphic.
A dual argument shows the monofication of $\Fcosheaf$ and $\Rcosheaf{F}$ are locally isomorphic.
Finally, the restriction of $F_\sigma$ and $\dot{F}_\sigma$ to $\Esheaf(\sigma)$ and $\Rsheaf{E}(\tilde{\sigma})$ (respectively) are equivalent, so their images are isomorphic and hence the PLS's are isomorphic.
\end{proof}

In the same way covering space theory allows us to alternate between looking at the periodic space and its quotient space, Lemma~\ref{lem:unravel} allows us to freely alternate between the corresponding bisheaves and PLS's.
The local isomorphisms means we need only look at the finite bisheaf over $\Sp^1$ to compute information about $\Hg_\bullet(K)$, but we can pass to the bisheaf over $\R$ to determine theoretical properties of this.

In the following result, we utilise the correspondence between $\Hg_\bullet(G)$ and the isobisheaf $\Ibisheaf$ around $G$ alluded to in Diagram~\ref{diag:LES} and Remark~\ref{rem:epi-co}.
Explicitly, each cycle of $G$ will correspond to a subclass of $\Esheaf$ by applying homology to the pairs $(G,\pi^{-1}(\St(\sigma)))\subsetneq (G,G)$ for each cell $\sigma$ to construct a representative of each cycle over each cell.

Notice that it is specifically a subclass of the episheaf by functoriality of homology applied to the family of pairs $(G,\sigma_{\St})\subseteq (G,\tau_{\St}) \subsetneq (G,G)$.
This is summarised in the bottom two rows of Diagram ~\ref{diag:correspondence}.
In turn, we can map down to the monocosheaf to in turn have a correspondence of $\Hg_\bullet(G)$ in the PLS, whose structure is given by the bottom row of Diagram~\ref{diag:correspondence}.

\begin{equation}
\begin{tikzcd}[column sep={1.2cm,between origins},row sep={1.4cm,between origins}]
	& & & {\Hg_{\bullet+1}(G)} & & & \\
        {\Esheaf_{\bullet+1}(\tau)} & & & &  & & {\Esheaf_{\bullet+1}(\sigma)}\\
        {\local(G)_\bullet(\tau)} & & & & & & {\local(G)_\bullet(\sigma)} \\
	\arrow["{\Hg_{\bullet+1}\left((G,G\setminus \tau_{\St})\supset  (G,\emptyset)\right)}"',from=1-4, to=2-1]
	\arrow["{\Hg_{\bullet+1}\left((G,G\setminus \sigma_{\St})\supset  (G,\emptyset)\right)}",from=1-4, to=2-7]
	\arrow[two heads,"{\Hg_{\bullet+1}\left((G,G\setminus \tau_{\St})\supseteq  (G,G\setminus \sigma_{\St})\right)}",from=2-1, to=2-7]
	\arrow[hook,"{\Hg_{\bullet}\left(\sigma_{\St}\subseteq \tau_{\St}\right)}",from=3-7, to=3-1]
	\arrow["{\frown\varphi^\tau}", from=2-1, to=3-1]
	\arrow["{\frown\varphi^\sigma}", from=2-7, to=3-7]
 \end{tikzcd}
 \label{diag:correspondence}
\end{equation}

\begin{lemma} \label{lem:1d-nontoroidal}
Diagram~\ref{diag:correspondence} restricts to a trivial representation of non-toroidal cycles in the PLS $\local(G)$.
\end{lemma}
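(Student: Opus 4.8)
The plan is to exploit the lift to $\R$ provided by Lemma~\ref{lem:unravel} together with the structural fact, visible in the bottom row of Diagram~\ref{diag:correspondence}, that the representative of any cycle of $G$ is a \emph{section} of the colocal system $\local(G)$: for $\sigma\leq\tau$ the representatives at $\sigma$ and $\tau$ are intertwined by the colocal system map $\local(G)_\bullet(\sigma\leq\tau)$, which is an isomorphism. Since $\Sp^1$ is connected, such a section is trivial as soon as it vanishes at a single cell. Thus the statement reduces to exhibiting one cell at which a non-toroidal cycle has vanishing representative, and the natural place to find it is upstairs in the contractible cover $\R$, where the cycle acquires a compact representative.

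First I would fix a non-toroidal class $c\in\Hg_{k+1}(G)$ lying in $\Ig_{k+1}=q_*(\Hg_{k+1}(K))$ and choose a lift $\tilde c\in\Hg_{k+1}(K)$ with $q_*\tilde c=c$. Working over $\R$ with the bisheaf $\Rbisheaf{F}$, the analogue of Diagram~\ref{diag:correspondence} holds, so the representative of $\tilde c$ at a cell $\tilde\sigma$ is obtained by sending $\tilde c$ into the relative group $\Hg_{k+1}(K,K\setminus\tilde\sigma_{\St})$ (writing $\tilde\sigma_{\St}$ for the preimage of $\St(\tilde\sigma)$ in $K$) and capping with $\varphi^{\tilde\sigma}$. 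The essential observation is that $\tilde c$ has a compact cellular representative $z$: whenever $\tilde\sigma_{\St}$ is disjoint from the support of $z$, the chain $z$ already lives in $K\setminus\tilde\sigma_{\St}$ and hence maps to $0$ in the relative group, so the representative of $\tilde c$ at $\tilde\sigma$ vanishes. Because $\R$ is unbounded there are infinitely many such cells, and because $\R$ is connected while these representatives again form a section of the colocal system $\local(K)$, the representative of $\tilde c$ must vanish at \emph{every} cell of $\R$.

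It then remains to transport this vanishing down to $G$. Here I would use Lemma~\ref{lem:unravel}: the covering $q$ induces a local isomorphism of the bisheaves, isobisheaves and PLS's, and on a distinguished neighbourhood $U$ of $\sigma_{\St}$ the restriction $q^{-1}(U)=\bigsqcup_{n\in\Z}U_n$ is a trivial cover, with each $q|_{U_n}$ a homeomorphism carrying the orientation datum $\varphi^{\tilde\sigma}$ to $\varphi^\sigma$. Localising $c=q_*\tilde c$ at $\sigma$ and excising to $U$ expresses the representative of $c$ at $\sigma$ as the sum over $n$ of the representatives of $\tilde c$ at the lifts $\tilde\sigma_n\subseteq U_n$, transported across $q|_{U_n}$; compactness of $z$ makes this sum finite. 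As every summand was just shown to vanish, the representative of $c$ at $\sigma$ is zero, and since this holds at one (indeed every) cell the section is trivial, proving the lemma.

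The step I expect to be the main obstacle is this last comparison: making rigorous the identification of the localisation of $c=q_*\tilde c$ at a single cell $\sigma$ with the sum of the localisations of $\tilde c$ at the infinitely many lifts of $\sigma$. Concretely, one must check that the excision isomorphism $\Hg_{k+1}(G,G\setminus\sigma_{\St})\cong\Hg_{k+1}(U,U\setminus\sigma_{\St})$ intertwines $q_*$ with the sum of the chain-level pushforwards $q_\#$ over the sheets $U_n$, that this sum is finite by compactness, and that it is orientation-compatible so that it commutes with the cap products $\frown\varphi^\sigma$ and $\frown\varphi^{\tilde\sigma_n}$ identified in Lemma~\ref{lem:unravel}. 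Everything else follows formally from the colocal system maps being isomorphisms over a connected base.
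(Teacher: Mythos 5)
Your proposal is correct and follows essentially the same route as the paper's proof: lift the non-toroidal class to $K$, use the finite support of the lifted cycle to get vanishing of its localisation at cells away from the support, propagate that vanishing along $\R$, and descend to $G$ via the sum-over-sheets identification $\bigoplus_{\tilde\sigma\in q^{-1}(\sigma)}\Rsheaf{E}(\tilde\sigma)\to\Esheaf(\sigma)$ coming from Lemma~\ref{lem:unravel}. Your propagation step, phrased as ``a section of the colocal system $\local(K)$ vanishing at one cell vanishes everywhere'' via Lemma~\ref{lem:colocal}, is just a clean repackaging of the paper's cell-by-cell iteration using the bisheaf squares and injectivity of the monocosheaf maps, and the descent step you flag as the main obstacle is exactly the commutative diagram the paper invokes ``by construction.''
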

\begin{proof}
Similar to Diagram~\ref{diag:correspondence}, there is a correspondence between cycles of $K$ and subclasses of $\Rsheaf{E}$ by maps $\Hg_\bullet(K)\to\Rsheaf{E}(\tilde{\sigma})$.
Let $\gamma$ be a non-toroidal cycle of $G$ and let $\tilde{\gamma}$ be a cycle such that $q(\tilde{\gamma})=\gamma$.
For ease of notation, we set $\gamma_\sigma$ to be the representative of $\gamma$ in $\Esheaf(\sigma)$ and $\tilde{\gamma}_{\tilde{\sigma}}$ the representative of $\tilde{\gamma}$ in $\Rsheaf{E}(\tilde{\sigma})$.
If $[\gamma_\sigma]=0$ for each cell $\sigma$ then the result is trivial, so otherwise assume $[\gamma_\sigma]\neq 0$ for some $\sigma$.
By construction, the following diagram commutes
\[
\begin{tikzcd}[column sep={3cm,between origins},row sep={1.5cm,between origins}]
         {H_{\bullet}(K)} & {\bigoplus_{\tilde{\sigma}\in q^{-1}(\sigma)}\Rsheaf{E}(\tilde{\sigma})} \\
         {H_{\bullet}(G)} & {\Esheaf(\sigma)}
        \arrow[from=1-1, to=1-2]
        \arrow[from=2-1, to=2-2]
        \arrow[from=1-1, to=2-1]
        \arrow[from=1-2, to=2-2]
\end{tikzcd}
\]
where $\bigoplus_{\tilde{\sigma}\in q^{-1}(\sigma)}\Rsheaf{E}(\tilde{\sigma})\to \Esheaf(\sigma)$ is given by the collections of isomorphisms $\Rsheaf{E}(\tilde{\sigma})\cong \Esheaf(\sigma)$ identified in Lemma~\ref{lem:unravel}.
Thus, it suffices to show that the representative of $\tilde{\gamma}$ in $\Rsheaf{E}(\tilde{\sigma})$ is in $\ker \dot{I}_{\tilde{\sigma}}$ for each $\tilde{\sigma}$.

$\tilde{\gamma}$ must have finite support, so let $\tilde{\sigma}$ be the smallest valued cell on $\R$ for which $[\tilde{\gamma}_{\tilde{\sigma}}]\neq 0$ and $\tilde{\tau}$ be the greatest valued cell for which this is true.
If $\tilde{\sigma}$ is a vertex $\tilde{v}$ and $\tilde{e}\leq\tilde{v}$ for $\tilde{e}$ more negative in $\R$ than $\tilde{v}$, then $[\tilde{\gamma}_{\tilde{e}}]=0$.
This means
\[
\dot{I}_v [\tilde{\gamma}_{\tilde{v}}] = \left(\Rcosheaf{M}(\tilde{e}\leq\tilde{v})\circ \dot{I}_e \circ \Rsheaf{E}(\tilde{e}\leq\tilde{v})\right) [\tilde{\gamma}_{\tilde{v}}] = \left(\Rcosheaf{M}(\tilde{e}\leq\tilde{v})\circ \dot{I}_e\right) [\tilde{\gamma}_{\tilde{e}}] = 0.
\]
Otherwise, if $\tilde{\sigma}$ is an edge $\tilde{e}$ and $\tilde{e}\leq\tilde{v}$ for $\tilde{v}$ more negative in $\R$ than $\tilde{e}$, then $[\tilde{\gamma}_{\tilde{v}}]=0$ and 
\[
\left(\Rcosheaf{M}(\tilde{e}\leq\tilde{v})\circ \dot{I}_e\right) [\tilde{\gamma}_{\tilde{e}}] = \left(\Rcosheaf{M}(\tilde{e}\leq\tilde{v})\circ \dot{I}_e \circ \Rsheaf{E}(\tilde{e}\leq\tilde{v})\right) [\tilde{\gamma}_{\tilde{v}}] = \dot{I}_v [\tilde{\gamma}_{\tilde{v}}] = 0.
\]
Since $\Mcosheaf$ is a monocosheaf, $\Rcosheaf{M}(\tilde{e}\leq\tilde{v})$ is injective.
So $\dot{I}_e [\tilde{\gamma}_{\tilde{e}}]=0$.

In either case, $\dot{I}_{\tilde{\sigma}} [\tilde{\gamma}_{\tilde{\sigma}}]=0$ which means $\tilde{\gamma}$ has trivial representation in $\Rcosheaf{M}(\tilde{\sigma})$.
Repeating iteratively for all adjacent cells with with non-trivial representation in $\Esheaf$ until reaching $\tilde{\tau}$ then shows that $\tilde{\gamma}$ is trivial in all components of the quotient-monocosheaf.
In particular, $\tilde{\gamma}$ has trivial representation in $\local(K)$ and hence $\local(G)$.
\end{proof}

For our main result, we wish to show there is an embedding of $\Hg_{\bullet+1}(G)/\Ig_{\bullet+1}$ into the classes of $\local(G)_\bullet$, however these objects are in different categories so we need to make this concept more precise.
Explicitly, consider the functor $\iota:\vect\to\colocalsystemcat(\Sp^1)$ which assigns to each vector space $V$ the constant colocal system $\cosheaf{\iota(V)}(\sigma)= V$ for every cell $\sigma$ and $\cosheaf{\iota(V)}(\sigma\leq \tau)=id$ for each face relation $\sigma\leq\tau$, and where each morphism $\varphi:V\to W$ is assigned $\cosheaf{\iota(\varphi)}:\cosheaf{\iota(V)}\to \cosheaf{\iota(W)}$ a morphism of colocal systems defined pointwise by $\varphi$.
Clearly $\iota$ is an embedding.
Thus, by an embedding of $\Hg_{\bullet+1}(G)/\Ig_{\bullet+1}$ into $\local(G)_\bullet$ we mean a monomorphism from a constant colocal system whose values are all $\Hg_{\bullet+1}(G)/\Ig_{\bullet+1}$ into $\local(G)_\bullet$.

\begin{theorem} \label{thm:1d-toroidal}
Diagram~\ref{diag:correspondence} restricts to the canonical embedding $\iota(\Hg_{\bullet+1}(G)/\Ig_{\bullet+1})\inj\local(G)_\bullet$.
\end{theorem}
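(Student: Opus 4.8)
The plan is to exhibit the assignment furnished by Diagram~\ref{diag:correspondence} as a monomorphism of colocal systems out of the constant system $\iota(\Hg_{\bullet+1}(G)/\Ig_{\bullet+1})$. For each cell $\sigma$ of $\Sp^1$, the diagram sends a class $\gamma\in\Hg_{\bullet+1}(G)$ to $\Phi_\sigma(\gamma):=\gamma_\sigma\frown\varphi^\sigma\in\local(G)_\bullet(\sigma)$, where $\gamma_\sigma\in\Esheaf_{\bullet+1}(\sigma)$ is the image of $\gamma$ under $\Hg_{\bullet+1}((G,G\setminus\sigma_{\St})\supset(G,\emptyset))$. First I would verify that these assemble into a single morphism of colocal systems. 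The commuting outer square of Diagram~\ref{diag:correspondence}---the episheaf restriction $\Esheaf_{\bullet+1}(\sigma\leq\tau)$, the two cap products, and the structure map $\local(G)_\bullet(\sigma\leq\tau)$---gives $\local(G)_\bullet(\sigma\leq\tau)\circ\Phi_\sigma=\Phi_\tau$ for every relation $\sigma\leq\tau$. Since this holds for all relations, including those closing the loop of $\Sp^1$, the family $(\Phi_\sigma(\gamma))_\sigma$ is invariant under the monodromy of the colocal system $\local(G)_\bullet$, so $\gamma\mapsto(\Phi_\sigma(\gamma))_\sigma$ is precisely a morphism from the constant colocal system with value $\Hg_{\bullet+1}(G)$. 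By Lemma~\ref{lem:1d-nontoroidal} this morphism kills $\iota(\Ig_{\bullet+1})$, hence it descends to the candidate $\bar\Phi:\iota(\Hg_{\bullet+1}(G)/\Ig_{\bullet+1})\to\local(G)_\bullet$.

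It then remains to prove $\bar\Phi$ is a monomorphism. As $\iota$ is a fully faithful embedding and morphisms of colocal systems are defined cellwise, $\bar\Phi$ is mono exactly when each $\bar\Phi_\sigma$ is injective; since the structure maps of $\local(G)_\bullet$ are isomorphisms (Lemma~\ref{lem:unravel}), it suffices to check injectivity over a single cell. Concretely, I must establish the converse of Lemma~\ref{lem:1d-nontoroidal}: if $\gamma\in\Hg_{\bullet+1}(G)$ satisfies $\gamma_\sigma\frown\varphi^\sigma=0$ in $\local(G)_\bullet(\sigma)$, then $\gamma\in\Ig_{\bullet+1}=\im q_*$, i.e.\ $\gamma$ is non-toroidal. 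Note that the theorem asks only for an embedding, so no surjectivity onto the monodromy-invariant part of $\local(G)_\bullet$ need be shown.

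To prove this converse I would pass to the cover $K\to G$ and the lifted bisheaf $\Rbisheaf{F}$ of Lemma~\ref{lem:unravel}. The composite $\frown\varphi:\Hg_{\bullet+1}(G)\to\Hg_\bullet(G)$ appearing in Diagram~\ref{diag:LES} is the cap product with the pulled-back generator of $\Hg^1(\Sp^1)$, i.e.\ intersection with a fibre $\pi^{-1}(\mathrm{pt})$. Lifting $\gamma$ over a single fundamental domain of $\R$ produces a \emph{finite} chain $\tilde c$ in $K$ whose boundary is supported on the two bounding fibres and is interchanged by the deck transformation $\tran$, hence determines a single $\tran$-invariant class $z=[\partial\tilde c]\in\Hg_\bullet(K)$. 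Chasing the two long exact sequences of Diagram~\ref{diag:LES} identifies the representative $\bar\Phi_\sigma(\gamma)$ with the image of $z$ in $\local(G)_\bullet(\sigma)\cong\local(K)_\bullet(\tilde\sigma)$, and identifies the condition $\gamma\in\Ig_{\bullet+1}$ with the condition that $z$ bounds in $K$ (so that $\tilde c$ closes up to a finite cycle $\tilde\gamma$ with $q_*[\tilde\gamma]=\gamma$).

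The crux---and the main obstacle---is to upgrade the hypothesis $\bar\Phi_\sigma(\gamma)=0$ to the genuine vanishing $[z]=0$ in $\Hg_\bullet(K)$. This is the exact converse of Lemma~\ref{lem:1d-nontoroidal}, yet it is not that argument read backwards: triviality in $\local(G)_\bullet$ is triviality in $\im(\dot I)$, a \emph{global} invariant of the isobisheaf $\Rbisheaf{I}$, and one must rule out a class that bounds only across infinitely many cells. I would argue using the injectivity of the structure maps of the monocosheaf $\Rcosheaf{M}$ together with the finite support of $\tilde c$: propagating the relation $\dot I_{\tilde\sigma}(\tilde c_{\tilde\sigma})=0$ outward from the support of $\tilde c$ along $\R$---as in the telescoping of Lemma~\ref{lem:1d-nontoroidal}, but now solving for a bounding chain rather than transporting triviality---yields a finite filling of $z$, whence $[z]=0$ and $\gamma\in\Ig_{\bullet+1}$. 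The legitimacy of these manipulations rests on the functoriality and termination of the epification and monofication algorithms (Theorem~\ref{thm:algorithms}), transported across the cover by Lemma~\ref{lem:unravel}. The essential difficulty is exactly the non-locality flagged in Remark~\ref{rem:epi-co} and the example of Section~\ref{sec:needing-sheaves}: whether a cycle is toroidal cannot be decided cell-by-cell, which is why the full isobisheaf, rather than its local data alone, must be invoked to close the argument.
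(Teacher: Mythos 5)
Your overall strategy coincides with the paper's: both arguments reduce Theorem~\ref{thm:1d-toroidal} to the contrapositive of Lemma~\ref{lem:1d-nontoroidal} (triviality in $\local(G)_\bullet(\sigma)$ forces $\gamma$ to be non-toroidal), both pass to the cover via Lemma~\ref{lem:unravel}, lift $\gamma$ to a chain $\tilde{\gamma}$ supported on a fundamental domain with $\D\tilde{\gamma}=\tran(\tilde{\zeta})-\tilde{\zeta}$, and both aim to show the hypothesis forces $[\tilde{\zeta}]=0$ in $\Hg_\bullet(K)$, after which $\tilde{\gamma}-\tran(\tilde{\alpha})+\tilde{\alpha}$ is a cycle projecting to $\gamma$. (Incidentally, your class $z=[\D\tilde{c}]$ is tautologically zero as written; the class that matters is $[\tilde{\zeta}]$, one of the two boundary pieces.) The first genuine gap is your claim that ``chasing the two long exact sequences of Diagram~\ref{diag:LES}'' identifies $\bar\Phi_\sigma(\gamma)$ with the image of this class. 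That diagram, and its lift over $\R$, is assembled from the pairs $(G,G\setminus\sigma_{\St})$ and $(G,\sigma_{\St})$ alone; it carries no information about the covering $K\to G$, so no formal chase of it can relate the cap product $\gamma_\sigma\frown\varphi^\sigma$ to the boundary of a fundamental-domain chain upstairs. The paper's proof of exactly this point is the computational heart of the argument: one chooses a $0$-cochain $\psi$ on $K$ with $q^*\varphi=\delta\psi$ (possible precisely because the orientation cocycle of $\Sp^1$ becomes exact when pulled back to $\R$), applies the projection formula $q_*(\tilde{\gamma})\frown\varphi=q_*(\tilde{\gamma}\frown q^*\varphi)$, and then uses the boundary formula for cap products to get $\tilde{\xi}=\D\left(\tilde{\gamma}-(\tilde{\gamma}\frown\psi)\right)+\tilde{\zeta}$, i.e.\ $[\tilde{\xi}]=[\tilde{\zeta}]$ in $\Hg_\bullet(K)$. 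Without this cochain-level input (or an equivalent Wang-sequence argument, which you would likewise have to supply), your identification is an assertion, not a proof.

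The second gap is in the step you correctly flag as the crux: upgrading $I_\sigma[\gamma_\sigma]=0$ in $\local(G)_\bullet(\sigma)$ to $[\tilde{\zeta}]=0$ in $\Hg_\bullet(K)$. Your proposed mechanism --- injectivity of the structure maps of $\Rcosheaf{M}$ plus outward propagation as in Lemma~\ref{lem:1d-nontoroidal} --- does not do this work. The hypothesis already says the class is zero in $\Rcosheaf{M}(\tilde{\sigma})$, and zero transports to zero under any map, so there is nothing to propagate; the real issue is that $\dot{F}_{\tilde{\sigma}}[\tilde{\gamma}_{\tilde{\sigma}}]$ may be a nonzero element of $\Rcosheaf{F}(\tilde{\sigma})$ lying in the monofication kernel, and one must show that every element of that kernel maps to zero under the inclusion-induced map $\Rcosheaf{F}(\tilde{\sigma})\to\Hg_\bullet(K)$. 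The paper proves this by induction on the iterations of Algorithm~\ref{alg:mono}: each kernel $\Rcosheaf{K}_{j+1}$ is written explicitly as a span of images and preimages, under inclusion-induced maps, of $\Rcosheaf{K}_j$ and of kernels of the cosheaf maps, so every generator is a cycle that already bounds in the star of some larger cell and hence bounds in $K$; termination of the algorithm (Theorem~\ref{thm:algorithms}, transported across the cover by Lemma~\ref{lem:unravel}) makes this induction finite, and the case $j=0$ is precisely the computation above. Your sketch points at the right toolbox, but it substitutes the Lemma~\ref{lem:1d-nontoroidal} telescoping --- which solves the opposite problem, transporting known triviality --- for the kernel analysis that is actually required.
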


\begin{remark}
A subtle point of Diagram~\ref{diag:correspondence} and Lemma~\ref{lem:1d-nontoroidal} is that non-toroidal cycles may already correspond to a trivial cycle within the (epi)sheaf.
For instance, the cross-section of an infinite cylinder will be a relative boundary\footnote{See Section~\ref{sec:needing-sheaves} for another example of this.}.
In contrast, an immediate Corollary of Theorem~\ref{thm:1d-toroidal} is that this cannot be true for toroidal cycles.
\end{remark}

\begin{proof}
Let $\gamma$ be a cycle of $G$.
As with Lemma~\ref{lem:1d-nontoroidal} we work primarily with $\Rbisheaf{F}$ and leverage the representation of $\gamma$ in $\Esheaf(\sigma)$ by $[\gamma_\sigma]$.
Let $\tilde{\gamma}$ be the \emph{chain} of $K$ which maps to $\gamma$ with support on the half open interval $[0,1)$ and write $\D\tilde{\gamma}=\tran(\tilde{\zeta})-\tilde{\zeta}$ for $\tilde{\zeta}$ supported on $\{0\}$.
By Lemma~\ref{lem:1d-nontoroidal}, Diagram~\ref{diag:correspondence} will restrict to well-defined maps $\Hg_{\bullet+1}(G)/\Ig_{\bullet+1}\to\local(G)_\bullet(\sigma)$.
By the correspondence of Diagram~\ref{diag:correspondence}, we may choose consistent bases for the image of $\Hg_\bullet(G)$ under each cell component of $\local(G)_\bullet$
Changing bases if necessary so that the PLS restricts to a constant colocal system on these classes.
So to show that $\iota(\Hg_{\bullet+1}(G)/\Ig_{\bullet+1})\inj\local(G)_\bullet$ we will prove a contrapositive result: that $I_\sigma[\gamma_\sigma]= 0$ implies $\gamma$ is non-toroidal.

First, suppose $F_\sigma[\gamma_{\sigma}]=0$.
We will show that $\gamma$ must be non-toroidal by relating $F_\sigma[\gamma_{\sigma}]$ to $\tilde{\zeta}$ in order to construct a cycle in $K$ which maps to $\gamma$.
Let $\varphi$ the the 1-cocycle of $G$ defined by the pullback of the orientation of $\Sp^1$.
In particular, we take $\varphi$ to be supported on some interval $[a,b]$ contained in $\St(\sigma)$, so that $F_\sigma[\gamma_{\sigma}]=[\gamma\frown \varphi]$.
Without loss of generality, assume that $0\leq a<b<1$, recellulating if necessary.
Let also $\psi$ be a $0$-cochain on $K$ defined as the pullback of the $0$-cochain on $\R$ which evaluates to $n$ on $[n+b,n+a+1]$ for every $n\in\Z$ and $0$ elsewhere.
Then $q^*\varphi = \delta\psi$ where $\delta$ is the coboundary map.

Set $\xi=\gamma\frown\varphi$ and let $\tilde{\xi}$ be its unique lift in $\St(\tilde{\sigma})\subsetneq K$.
Since $\gamma=q_*(\tilde{\gamma})$, the identity $q_*(\tilde{\gamma})\frown \varphi = q_*(\tilde{\gamma}\frown q^*\varphi)$ tells us $\xi = q_*(\tilde{\gamma}\frown q^*\varphi)$.
Further, $\tilde{\gamma}$ is supported on $[0,1)$, so $\tilde{\gamma}\frown q^*\varphi$ is supported on $\St(\tilde{\sigma})$ and hence $\tilde{\xi}=\tilde{\gamma}\frown q^*\varphi$ by local isomorphism.
Applying the boundary identity of the cap product, we then have
\[
\D\left(\tilde{\gamma}\frown \psi\right) = \D\tilde{\gamma}\frown \psi - \tilde{\gamma}\frown \delta\psi = \D\tilde{\gamma}\frown \psi - \tilde{\gamma}\frown q^*\varphi.
\]
Rearranging this equation and observing $\D\tilde{\gamma}\frown\psi = \tran (\tilde{\zeta}) = \tilde{\zeta} + \D\tilde{\gamma}$ by construction of $\psi$ then tells us
\[
\tilde{\xi} = \tilde{\gamma}\frown q^*\varphi = \D\tilde{\gamma}\frown \psi- \D\left(\tilde{\gamma}\frown \psi\right) = \D(\tilde{\gamma} -(\tilde{\gamma}\frown\psi))+\tilde{\zeta}.
\]
Hence $[\tilde{\xi}]=[\tilde{\zeta}]$.

And $[\xi]=0$ in $\Fcosheaf(\sigma)$ by assumption, so by local isomorphism $[\tilde{\xi}]=\dot{F}_{\tilde{\sigma}}[\tilde{\gamma}_{\tilde{\sigma}}]=0$ in $\Rcosheaf{F}(\tilde{\sigma})$.
Applying the homology functor to the embedding $\St(\tilde{\sigma})\inj \R$ then implies $[\tilde{\zeta}]=[\tilde{\xi}]=0$ in $K$, so there is some $\tilde{\alpha}$ such that $\tilde{\zeta}=\D(\tilde{\alpha})$.
But then $\D(\tilde{\gamma}-\tran(\tilde{\alpha})+\tilde{\alpha})=0$ and $q_*(\tilde{\gamma}-\tran(\tilde{\alpha})+\tilde{\alpha})=\gamma$, so $\gamma$ is non-toroidal.

To finish the proof, let $\Fcosheaf_j$ denote the cosheaf at the $j^\mathrm{th}$ iteration of Algorithm~\ref{alg:mono} where $\Fcosheaf_0:=\Fcosheaf$ is the initial cosheaf.
Furthermore, let $\Kcosheaf_j$ be the $j^\mathrm{th}$ kernel so that $\Kcosheaf_0:=\cosheaf{0}$ and $\Fcosheaf_j=\Fcosheaf/\Kcosheaf_j$.
We similarly define $\Rcosheaf{F}_j$ and $\Rcosheaf{K}_j$ and claim that if $F_\sigma[\gamma_\sigma]\in\Kcosheaf_j$ for some $j\geq0$ and some cell $\sigma$, then $\gamma$ must be non-toroidal.
Indeed, since the Monofication Algorithm terminates by Theorem~\ref{thm:algorithms}, $F_\sigma[\gamma_\sigma]\in\Kcosheaf_j(\sigma)$ for some $j\geq0$ if and only if $F_\sigma[\gamma_\sigma]$ is trivial in $\Mcosheaf(\sigma)$ and in turn $\local(G)$, and the contrapositive statement to the claim finishes the proof of the Proposition.

Since $F_\sigma[\gamma_{\sigma}]=0$ implies $\gamma$ is non-toroidal, we have the desired result for the initialisation step.
Otherwise, once again consider $\dot{F}_{\tilde{\sigma}}[\tilde{\gamma}_{\tilde{\sigma}}]$ in the cosheaf under $\R$ with $\tilde{\gamma}$ and $\tilde{\sigma}$ chosen as before.
Observe that for any edges $\tilde{e},\tilde{f}$ and vertices $\tilde{u},\tilde{v},\tilde{w}$ where $\tilde{e}
\leq \tilde{v}$, $\tilde{e}\leq \tilde{w}$, $\tilde{f}\leq \tilde{u}$ and $\tilde{f}\leq \tilde{v}$ (where possibly $u=w$ but $\tilde{u}\neq \tilde{v}$ and $\tilde{v}\neq \tilde{w}$) we may define $\Rcosheaf{K}_j$ inductively so that
\begin{align*}
\Rcosheaf{K}_{j+1}(\tilde{e}) & = \Rcosheaf{K}_{j}(\tilde{e})+\Rcosheaf{F}(\tilde{e}\leq \tilde{v})^{-1}\left[\Rcosheaf{K}_{j}(\tilde{v})\right]+\Rcosheaf{F}(\tilde{e}\leq \tilde{w})^{-1}\left[\Rcosheaf{K}_{j}(\tilde{w})\right]\\
\Rcosheaf{K}_{j+1}(\tilde{v}) & = \Rcosheaf{K}_{j}(\tilde{v})+\Rcosheaf{F}(\tilde{e}\leq \tilde{v})\left[\Rcosheaf{K}_{j+1}(\tilde{e})\right]+\Rcosheaf{F}(\tilde{f}\leq \tilde{v})\left[\Rcosheaf{K}_{j+1}(\tilde{f})\right]
\end{align*}
where $A+B$ denotes the linear span of $A$ and $B$.
Thus in all cases, for $j\geq 1$ we can write $\Rcosheaf{K}_{j}(\tilde{\sigma})$ as a linear span of spaces which are a sequence of images and preimages of a kernel.
Moreover, recall that all maps of the cosheaf are induced by inclusion.
So if $\dot{F}_{\tilde{\sigma}}[\tilde{\gamma}_{\tilde{\sigma}}]\in\Rcosheaf{K}_{j}(\tilde{\sigma})$ for $j\geq 1$ then we can always write $\dot{F}_{\tilde{\sigma}}[\tilde{\gamma}_{\tilde{\sigma}}]$ as represented the sum of cycles of the form $\tilde{\alpha}_{\tilde{\tau}}$ such that $\tilde{\alpha}_{\tilde{\tau}}$ is a cycle in $\St(\tilde{\sigma})$ but homologous to a boundary in $\St(\tilde{\tau})$.
But this means $\dot{F}_{\tilde{\sigma}}[\tilde{\gamma}_{\tilde{\sigma}}]=[\tilde{\xi}]$ is a boundary in $K$ and $\gamma$ must be non-toroidal by an identical argument to the $j=0$ case.
\end{proof}

\begin{remark}
In Theorem~\ref{thm:1d-toroidal} we need to show that $[\tilde{\gamma}\frown \varphi]\neq 0$ in $\Mcosheaf(\sigma)$ instead of in $\Hg_\bullet(G)$, since it is possible for $\gamma\frown\varphi$ to be a boundary in $G$ but for $\tilde{\gamma}\frown q^*\varphi$ to not be a boundary in $K$.
See Section~\ref{sec:needing-sheaves} for an example of this.
\end{remark}

\subsection{The \texorpdfstring{$d$}{d}-periodic Case} \label{sec:full-case}

For the following, we let $\{\tran_1,\dots,\tran_d\}$ be a basis for $T$ and denote $\hat{T}_i=\langle\tran_1,\dots,\tran_{i-1},\tran_{i+1},\dots,\tran_d\rangle$, where $\langle\bullet\rangle$ denotes the group generated by the specified elements.
Additionally, we define $\hat{K}_i=K/\hat{T}_i$.
Observe that if $K$ is $d$-periodic for $d\geq 1$ then $\hat{K}_i$ is 1-periodic and $G\cong\hat{K}_i/\langle\tran_i\rangle$.

\begin{proposition}\label{prop:toroidal-compression}
Suppose $K$ is a $d$-periodic complex for $d\geq 1$.
If $\gamma$ is a toroidal cycle in $G$ with respect to the quotient $K\surj G$ then there exists $i\in\{1,\dots,d\}$ such that $\gamma$ is a toroidal cycle with respect to the quotient $\hat{K}_{i}\surj G$.
\end{proposition}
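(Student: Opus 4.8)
The first move is to factor the quotient. Since $q$ decomposes as $q = q_i\circ p_i$, where $p_i\colon K\surj\hat K_i$ collapses $\hat T_i$ and $q_i\colon \hat K_i\surj G$ collapses $\langle\tran_i\rangle$, we get $\Ig_\bullet=\im(q_*)=\im\big((q_i)_*\circ(p_i)_*\big)\subseteq\im((q_i)_*)$ for each $i$. Writing $\Ig_\bullet^{(i)}:=\im((q_i)_*)$ for the non-toroidal classes with respect to $\hat K_i\surj G$, the Proposition is equivalent to the contrapositive
\[
\bigcap_{i=1}^d\Ig_\bullet^{(i)}\ \subseteq\ \Ig_\bullet,
\]
that is, a class lifting to a finite cycle in every $\hat K_i$ already lifts to a finite cycle in $K$. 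This inclusion is what I would establish.

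To make the lifting problem algebraic I would work over the group ring $R=\F[\Z^d]=\F[\tran_1^{\pm},\dots,\tran_d^{\pm}]$. As the $\Z^d$-action is free and cellular, $C_\bullet(K)$ is a complex of free $R$-modules; collapsing orbits gives $C_\bullet(G)=C_\bullet(K)\otimes_R\F$ with $\F=R/\mathfrak{m}$, $\mathfrak{m}=(\tran_1-1,\dots,\tran_d-1)$, and likewise $C_\bullet(\hat K_i)=C_\bullet(K)\otimes_R S_i$ with $S_i=\F[\tran_i^{\pm}]$. The decisive feature is that $\tran_1-1,\dots,\tran_d-1$ is a \emph{regular sequence} in $R$. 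Lifting $\gamma$ to a finite chain $\hat\gamma$ with $q_\#\hat\gamma=\gamma$, the boundary $\D\hat\gamma$ lies in $\mathfrak{m}\,C_\bullet(K)$, so
\[
\D\hat\gamma=\sum_{i=1}^d(\tran_i-1)\,\zeta_i
\]
for chains $\zeta_i$; this is the multi-directional analogue of the single monodromy $\D\tilde\gamma=(\tran-1)\tilde\zeta$ of the $1$-periodic case, and $\gamma$ is non-toroidal with respect to $K$ exactly when $[\D\hat\gamma]=0$ in $\Hg_\bullet(\mathfrak{m}\,C_\bullet(K))$, via the connecting map of $0\to\mathfrak{m}C_\bullet(K)\to C_\bullet(K)\to C_\bullet(G)\to 0$.

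I would then feed in the hypothesis one direction at a time. Each $\hat K_i\surj G$ is $1$-periodic (since $G\cong\hat K_i/\langle\tran_i\rangle$), so Theorem~\ref{thm:1d-toroidal} applies and identifies ``$\gamma$ non-toroidal with respect to $\hat K_i$'' with the vanishing of its $i$-th monodromy class in $\Hg_\bullet(\hat K_i)$; pulling this back to $K$ yields $\zeta_i=\D W_i+\sum_{j\neq i}(\tran_j-1)\eta_{ij}$ for suitable chains $W_i,\eta_{ij}$. Substituting and replacing $\hat\gamma$ by $z:=\hat\gamma-\sum_i(\tran_i-1)W_i$, which leaves $q_\# z=\gamma$ unchanged, cancels the first-order monodromy and leaves a purely second-order boundary
\[
\D z=\sum_{i<j}(\tran_i-1)(\tran_j-1)\,\mu_{ij}\in\mathfrak{m}^2 C_\bullet(K).
\]

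The remaining task—showing this second-order cycle bounds inside $\mathfrak{m}C_\bullet(K)$, so that $z$ may be corrected by a chain in $\ker q_\#$ into an honest cycle of $K$ mapping to $\gamma$—is the heart of the matter and the step I expect to be hardest. It is precisely here that the regularity of $\tran_1-1,\dots,\tran_d-1$ must be used rather than naive direction-by-direction cancellation. Concretely I would induct on $d$, collapsing one direction $\bar K=K/\langle\tran_d\rangle$ and splitting on whether $\gamma$ is toroidal in $\bar K$: the toroidal case is dispatched by the inductive hypothesis (because $\widehat{(\bar K)}_i=\hat K_i$), while the non-toroidal case is reduced, through the regular-sequence identity $(\tran_i-1)\cap(\tran_j-1)=(\tran_i-1)(\tran_j-1)$ and the fact that syzygies of a regular sequence are Koszul, to absorbing the leftover monodromy. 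The genuinely delicate point is the possible $\mathfrak{m}$-torsion in $\Hg_\bullet(K)$, which obstructs termwise cancellation; taming it through the Koszul/$\mathrm{Tor}$ structure of the regular sequence—equivalently, proving that $\Hg_\bullet(G)/\Ig_\bullet$ injects into $\bigoplus_i\Hg_\bullet(G)/\Ig_\bullet^{(i)}$ at the level of leading Koszul terms—is what ultimately licenses the ``one direction at a time'' reduction.
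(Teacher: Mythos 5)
Your reduction is sound as far as it goes: the contrapositive reformulation $\bigcap_{i=1}^d\Ig_\bullet^{(i)}\subseteq\Ig_\bullet$, the identification of non-toroidality with the vanishing of $[\D\hat\gamma]$ under the connecting map of $0\to\mathfrak{m}C_\bullet(K)\to C_\bullet(K)\to C_\bullet(G)\to 0$, and the first-order correction producing $z$ with $q_\# z=\gamma$ and $\D z\in\mathfrak{m}^2C_\bullet(K)$ are all correct (though what you attribute to Theorem~\ref{thm:1d-toroidal} --- vanishing of the monodromy class in $\Hg_\bullet(\hat{K}_i)$ --- is not literally that theorem's statement; it should instead be derived directly from freeness of the action, using that multiplication by $\tran_i-1$ is injective on chains). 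The genuine gap is that the proof stops exactly where the proposition's content begins, and you say so yourself: the claim that the second-order cycle $\D z$ bounds inside $\mathfrak{m}C_\bullet(K)$ is left as a program (induct on $d$, invoke Koszul syzygies of the regular sequence, control $\mathfrak{m}$-torsion in $\Hg_\bullet(K)$) rather than as an argument. Worse, the induction you sketch does not reduce the difficulty: in the inductive step, the case where $\gamma$ is non-toroidal with respect to $\bar{K}=K/\langle\tran_d\rangle$ asks you to glue a cycle lift in $\bar{K}$ and a cycle lift in $\hat{K}_d$ into a cycle of $K$, and for $d=2$ that is verbatim the statement being proved; so that branch of your induction is the whole proposition, not a smaller instance of it. Since the ``delicate point'' you defer ($\mathfrak{m}$-torsion obstructing termwise cancellation) is precisely the possible failure mode of the statement, nothing beyond the first-order cancellation has actually been established.

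For contrast, the paper sidesteps the descent problem entirely by proving the direct implication rather than the contrapositive: it takes a lift supported on the fundamental domain $[0,1]^d$, uses the uniqueness of the decomposition of its boundary into directional identification terms (your $\sum_i(\tran_i-1)\zeta_i$), notes that toroidality forces some term to be nonzero, and then orders the quotient maps $q=q_1\circ\cdots\circ q_d$ so that an offending direction is applied last; independence of the choice of lift is handled by a $\D\D c=0$ contradiction and a disjoint-support argument for lifts not confined to the fundamental domain. In that formulation one only ever needs to exhibit a nonvanishing first-order obstruction in a single direction, never to cancel the mixed second-order terms --- which is why that route closes and yours, as written, does not. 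If you want to salvage your algebraic framing, the missing lemma you must prove is exactly the inclusion $\Hg_\bullet(\mathfrak{m}^2C_\bullet(K))\cap\D\left(C_\bullet(K)\right)\subseteq\D\left(\mathfrak{m}C_\bullet(K)\right)$ in the relevant degree, or some substitute for it; no amount of reshuffling the induction avoids confronting it.
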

\begin{proof}
We can decompose the quotient map into $q = q_1 \circ q_2 \circ \ldots \circ q_d$ where any permutation of the application of quotient map commutes. 
To prove the lemma is equivalent to showing that if $\gamma$ is a cycle in $\coker(q)$ then there is a permutation such that $\gamma\in\coker(q_d)$, i.e. the cokernel of the final quotient map.
We first show that this holds for any lift $\gamma$ which is supported on $[0,1]^d$. The relations induced on this representative are given by $q_i \cong (0\sim 1)_i$ or the identification in the $i$-th coordinate.
As the support is restricted to $[0,1]^d$ all chain groups are finite, the boundary $\partial \gamma$ is in the span of the $q_i$'s. Further, since the $q_i$'s are linearly independent, there is a unique linear combination 
$$\partial \gamma =\sum\limits_i \lambda_i q_i $$
Any permutation which places a $q_i$ where $\lambda_i\neq 0$ in at the end, i.e. $\tilde{q}_d$, will have $\gamma$ will be in the cokernel of the induced map $q^*_{d}$. 

Next  consider the case where there is a different lift $\gamma'$ but it is still supported on $[0,1]^d$. Assume that we subtract all non-toroidal cycles from the lift. In this case, as they are homologous cycles in the quotient, the chains must be homologous in the lifts (as the quotient maps are injective on toroidal cycles).  Let $c$ be the lift of the bounding chain. Applying the quotients in the same order as above, there must be an $i$ such that $\gamma'$ is a cycle and $\gamma$ is not. However, this implies that $\partial \cdot \partial c = \partial (\gamma+\gamma') = \partial \gamma \neq 0$ which is a contradiction. We observe that if any non-toroidal cycle is added, it either in the kernel of $\tilde{q}_d$, in which case the statement still holds, or it is in the kernel of $\tilde{q}_i$ for $i<d$ in which case we are in the case described above. 

Finally we show that the same holds for any choice of lifts, not only those supported on $[0,1]^d$. Applying the quotients in the same order as above, after applying the first $(d-1)$ quotients,   $\gamma$ is supported on $[0,1]^{d-1}\times \mathbb{R}$. Hence $\gamma$ can be written as the sum of disjoint chains. Note that if it is not disjoint, then it is supported on $[0,1]^d$. As the boundary of the sum is not zero, there must be an component of the chain which has a non-zero boundary and so it cannot be a toroidal cycle at $K_{d-1}$, completing the proof.
\end{proof}

For the remainder of the paper we use the notation that $\local_L(G)$ is the persistent local system obtained from the isobisheafification of the bisheaf attributed to $\pi_L:G\to\torus^\ell$.
In the case where $L=\{i\}$ we further simplify this to $\local_i(G)$.
We may either look at each $\local_i(G)$ separately for each $i$ or consider them together as the object $(\local_1(G),\dots,\local_d(G))$ in the product category $\Pi_{i=1}^d\colocalsystemcat(\Sp^1)$.

\begin{theorem} \label{thm:toroidal}
If $K$ is a $d$-periodic cellular complex then there is a canonical embedding $\iota(\Hg_{\bullet+1}(G)/\Ig_{\bullet+1})\inj(\local_1(G)_\bullet,\dots,\local_d(G)_\bullet)$.
In other words, for every non-zero $[\gamma]\in\Hg_{\bullet}(G)/\Ig_{\bullet}$ there exists $i\in\{1,2,\dots,d\}$ and a corresponding canonical embedding of $\iota(\F)\inj\local_i(G)$.
\end{theorem}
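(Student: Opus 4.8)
The plan is to bootstrap the one-dimensional result, Theorem~\ref{thm:1d-toroidal}, to all $d$ periodic directions at once, using Proposition~\ref{prop:toroidal-compression} as the glue that makes the $d$ separate embeddings jointly injective. The essential observation, recorded just before the statement, is that each $\hat{K}_i=K/\hat{T}_i$ is a $1$-periodic complex whose quotient is $G\cong\hat{K}_i/\langle\tran_i\rangle$, and that the canonical circle map $G\to\Sp^1$ for this $1$-periodic structure is exactly the projection $\pi_i$ whose associated bisheaf defines $\local_i(G)$. Granting this identification, Theorem~\ref{thm:1d-toroidal} applies verbatim to each quotient $\hat{K}_i\surj G$.

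First I would fix notation by writing $\Ig^{(i)}_\bullet := (q_i)_*\big(\Hg_\bullet(\hat{K}_i)\big)$ for the non-toroidal classes of $G$ relative to the single quotient $q_i:\hat{K}_i\surj G$. Since $q$ factors as $q_i\circ\hat{q}_i$ with $\hat{q}_i:K\surj\hat{K}_i$, functoriality of $q_*$ gives the inclusion $\Ig_\bullet\subseteq\Ig^{(i)}_\bullet$ for every $i$, and hence a canonical surjection $p_i:\Hg_{\bullet+1}(G)/\Ig_{\bullet+1}\surj\Hg_{\bullet+1}(G)/\Ig^{(i)}_{\bullet+1}$. Applying Theorem~\ref{thm:1d-toroidal} to $\hat{K}_i$ produces the canonical monomorphism $\Phi_i:\iota\big(\Hg_{\bullet+1}(G)/\Ig^{(i)}_{\bullet+1}\big)\inj\local_i(G)_\bullet$, and precomposing with $\iota(p_i)$ gives a canonical map $\Psi_i:=\Phi_i\circ\iota(p_i):\iota\big(\Hg_{\bullet+1}(G)/\Ig_{\bullet+1}\big)\to\local_i(G)_\bullet$. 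Assembling these over $i$ yields the candidate morphism into $(\local_1(G)_\bullet,\dots,\local_d(G)_\bullet)$; as every constituent is canonical, so is the assembled map.

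It then remains to verify injectivity of the assembled map, which is where Proposition~\ref{prop:toroidal-compression} enters. Suppose a class $[\gamma]\in\Hg_{\bullet+1}(G)/\Ig_{\bullet+1}$ is sent to zero by every $\Psi_i$. Because each $\Phi_i$ is a monomorphism, $\Psi_i[\gamma]=0$ forces $p_i[\gamma]=0$, i.e.\ $\gamma\in\Ig^{(i)}_{\bullet+1}$, so $\gamma$ is non-toroidal with respect to $\hat{K}_i\surj G$. If this holds for every $i\in\{1,\dots,d\}$, then the contrapositive of Proposition~\ref{prop:toroidal-compression} shows $\gamma$ is non-toroidal with respect to $K\surj G$, that is $\gamma\in\Ig_{\bullet+1}$ and $[\gamma]=0$. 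Equivalently, this is the identity $\bigcap_{i=1}^d\Ig^{(i)}_{\bullet+1}=\Ig_{\bullet+1}$, with the inclusion $\supseteq$ immediate and $\subseteq$ supplied by the Proposition. This establishes the embedding, and the ``in other words'' restatement follows by feeding a single non-zero $[\gamma]$ through Proposition~\ref{prop:toroidal-compression} to locate a direction $i$ on which $\Psi_i[\gamma]\neq 0$, producing the one-dimensional embedding $\iota(\F)\inj\local_i(G)$.

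The main obstacle I anticipate is not the formal kernel computation above but the identification that legitimises it: confirming that the PLS produced by Theorem~\ref{thm:1d-toroidal} for the $1$-periodic complex $\hat{K}_i\surj G$ genuinely coincides with $\local_i(G)$ as defined through the bisheaf of $\pi_i:G\to\torus^1$. This requires checking that the covering $\hat{K}_i\to G$ unwraps precisely the $i$-th circle factor, so that the lift $\hat{K}_i\to\R$ and its resulting bisheaf $\Rbisheaf{F}$ realise the same local data as $\pi_i$, and that the cap-product degree shift is the same single-circle ($d=1$) shift in both descriptions, matching the $\bullet+1$ versus $\bullet$ indexing of the source and the PLS. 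Once this compatibility is pinned down, the remainder is the purely formal injectivity argument driven by Proposition~\ref{prop:toroidal-compression}.
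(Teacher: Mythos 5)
Your proposal is correct and takes essentially the same approach as the paper: the paper's entire proof is the one-line remark that the theorem ``follows immediately from Theorem~\ref{thm:1d-toroidal} and Proposition~\ref{prop:toroidal-compression},'' and your argument is exactly the careful expansion of that line --- applying the 1-periodic theorem to each quotient $\hat{K}_i\surj G$ to get the monomorphisms $\Phi_i$, and using the contrapositive of Proposition~\ref{prop:toroidal-compression} (equivalently, $\bigcap_{i=1}^d\Ig^{(i)}_{\bullet+1}=\Ig_{\bullet+1}$) to obtain joint injectivity. The compatibility you flag at the end is in fact definitional rather than an obstacle: $\local_i(G)$ is defined as the PLS of the bisheaf attributed to $\pi_i:G\to\Sp^1$, and this $\pi_i$ is precisely the canonical circle-valued map of the 1-periodic structure $\hat{K}_i\surj G$, so Theorem~\ref{thm:1d-toroidal} applied to $\hat{K}_i$ produces an embedding into $\local_i(G)_\bullet$ with the same degree shift.
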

\begin{proof}
This follows immediately from Theorem~\ref{thm:1d-toroidal} and Proposition~\ref{prop:toroidal-compression}.
\end{proof}

\begin{remark}
We note that for sufficiently nice translation actions on $\R^d$ -- such that the translation group is isomorphic to $\Z^k$ and the quotient space is a manifold, e.g. a free and properly discontinuous action, then the above results apply directly, as the quotient space must be homologically equivalent to a torus\footnote{A stronger equivalence holds, but is not relevant for our results}.  
\end{remark}

\section{Algorithms} \label{sec:algorithms}
In this section, we describe the algorithms which we use to compute both local systems.

\subsection{Constructing Bisheaves}

Suppose we are given a complex $G$ in an $n$-dimensional box with periodic boundary conditions in the first $d$ coordinates.
That is, $G$ is a quotient space of a $d$-periodic  complex $K$ in $\R^n$ and $G$ is embedded in $[0,1]^n/\sim \;\cong \torus^d\times[0,1]^{n-d}$ where $\sim$ is the equivalence relation $0\sim 1$ for the first $d$ coordinates.
See for example Figure~\ref{fig:periodic-boundary} where we glue together the left and right sides of the highlighted box.

\begin{figure}[ht]
\centering
    \includegraphics[trim=0 2cm 0 0,clip,page=3,width=0.7\textwidth]{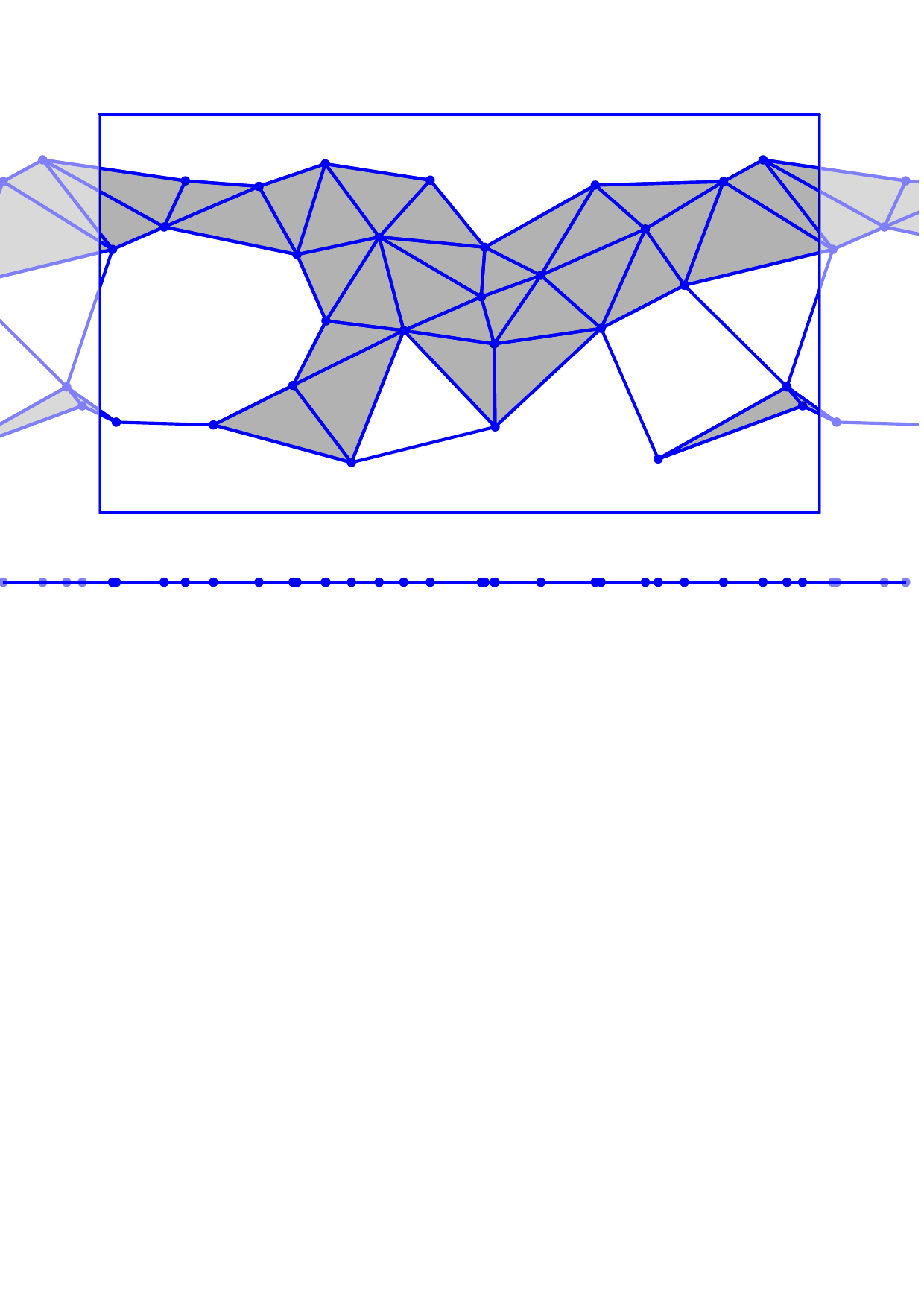}
    \caption{An periodic simplicial complex over $\mathbb{S}^1$.}
    \label{fig:periodic-boundary}
\end{figure}
We restrict ourselves to the case of embedded cellular complexes such as subsets of triangulations or cubical complexes and as a reminder we compute the bisheaves corresponding to the $d$-coordinate projections $\pi_d:G\to\torus^d$ (rather than more general maps).
To compute the bisheaves we need to be able to do the following operations:
\begin{itemize} 
\item compute homology (with representative cycles)
\item  compute cohomology (with representative cocycles)
\item compute relative homology (with representative cycles)
\item compute cap products
\end{itemize}
These computations will be done over preimages $\pi^{-1}_d$ of open sets of $\torus^d$. Therefore, we first must construct a cellulation of $\torus^d$ such that all open sets are captured by open stars of the cellulation. This condition is equivalent to saying that sheaves/cosheaves/bisheaves are locally constant over the open star of every cell in the cellulation. This allows us to then use cellular homology computations for the operations above.

In the case where $G$ is a cubical complex, this is trivial as any the projection will correspond to a $d$-face of the ($n$-dimensional) cubical complex. Hence any preimage of open (or respectively closed) cells in $\torus^d$ is naturally an open (closed) cubical subcomplex of $G$.

If $G$ is a simplicial complex, a constrained triangulation of $\torus^d$ must be computed, where each simplex maps to a sum of simplices in the triangulations of  $\torus^d$. Note that while the requirement of $G$ being embedded is for convenience, the requirement that the cellulation of $\torus^d$ be embedded is necessary. 
As a simple example in the case of a 1-periodic complex as in Figure~\ref{fig:divide}, a cellulation of $\Sp^1$ is given by projecting all the vertices to the corresponding coordinate. As any change in the complex must occur at a vertex, this is sufficient for our purposes. One dimension higher (from $\torus^3 \rightarrow \torus^2$), we show an example in Figure~\ref{fig:2d}. With sufficient subdivision, we can obtain a triangulation with the required properties --- subdividing whenever two simplices which do not intersect in $G$ intersect in the image of $\pi_d(G)$. To the best of our knowledge, there are no non-obvious ways to achieve this, with a better run time for example, and we leave it for future work to further investigate.

\begin{figure}[ht]
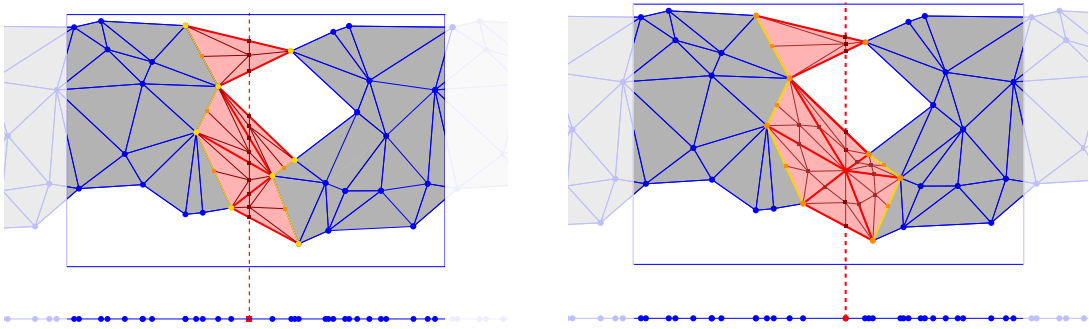

\includegraphics[page=5,width=0.5\textwidth]{mainfig.pdf}
\includegraphics[page=6,width=0.5\textwidth]{mainfig.pdf}
\caption{Successive subdivisions of the triangulation in Figure~\ref{fig:periodic-boundary} so that the preimage of the highlighted vertices in $\Sp^1$ are subcomplexes.}
    \label{fig:divide}
\end{figure}

\begin{figure}[ht]
\centering \includegraphics[width=0.4\textwidth]{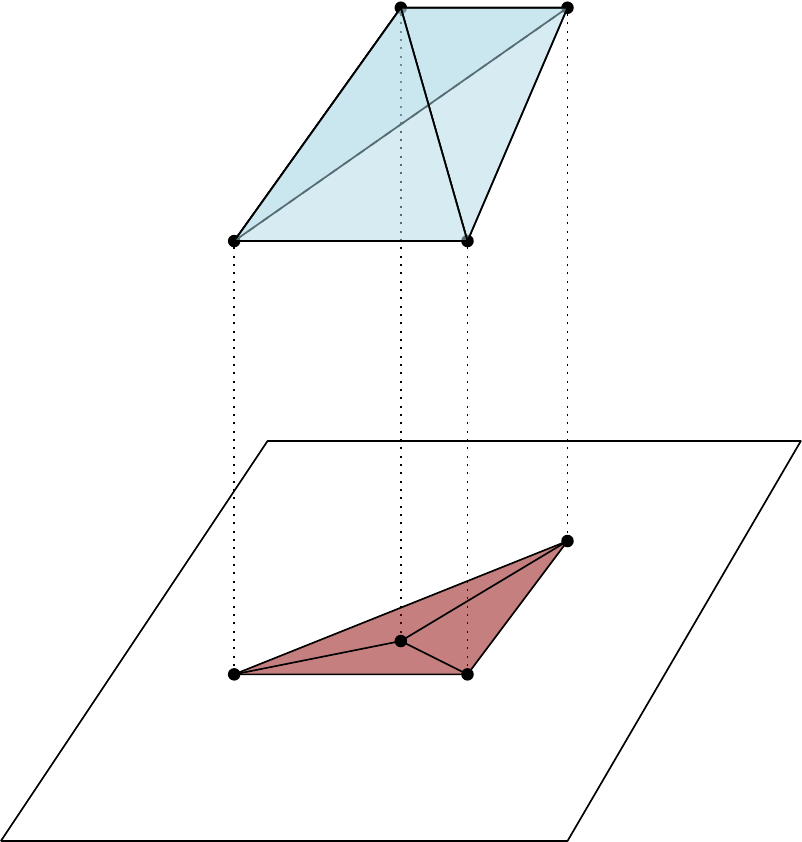}
\caption{This projection of a tetrahedron to the $\torus^2$, requires a triangulation of 4 triangles so that open star of each simplex has a constant preimage.}
    \label{fig:2d}
\end{figure}

Once the cellulation of $\torus^d$, which we denote $\mathcal{T}$, has been fixed, we must compute the simplices which lie in the preimage of each cell of $\mathcal{T}$.  In the case of cubical complexes, this is again trivial. In the case of  simplicial complexes, we must project each simplex $\sigma \in G$ to $\torus^d$ and find which simplices $\tau\in \mathcal{T}$ intersect $\pi_d(\sigma)$. To the best of our knowledge, no algorithm beyond direct search of the intersections is known. 

With this information in hand, we can compute the bisheaves. For the sheaf, we must compute the relative homology (and later also the cohomology) of each open cell. Here we use the fact that if we were to subdivde $\mathcal{T}$, each open star in the subdivision would correspond to an open cell in $\mathcal{T}$ -- so we omit the subdivision. For each cell $\tau\in \mathcal{T}$, we compute
\[
\Hg_\bullet(\pi_d^{-1}(\Cl(\St(\tau))),\pi_d^{-1}(\Cl(\St(\tau)))-\pi^{-1}_d(\St(\tau)))
\]
and
\[
\Hg^\bullet(\pi_d^{-1}(\Cl(\St(\tau))),\pi_d^{-1}(\Cl(\St(\tau)))-\pi^{-1}_d(\St(\tau)))
\]
where $\Cl(\cdot)$ denotes the closure. Here by $\pi^{-1}_d(\tau)$ we mean all open cells which intersect $\St(\tau)$ in the projection. It is straightforward to check that this deform retracts to the preimage. This is well-defined and can be computed cellularly as we assume finiteness. The induced maps between preimages of cells are indued by chain maps which are restrictions. This includes all the required data for the sheaves. 

For the cosheaves, we must  compute the homology of $\Hg_\bullet(\pi^{-1}_d)$.  Here we cannot simply take the preimages of open cells but rather we must take a the (barycentric) subdivision of $\pi^{-1}_d$ which is a subset of $\Sub(G)$, the subdivision of $G$. Again, it is straightforward to show that $\pi^{-1}_d$ deform retracts to the maximum subcomplex of $\Sub(\pi^{-1}_d)$, i.e. the large subset of simplices which form a complex. The cosheaf is then given as the homology of this maximal subcomplex. One can directly verify that the chain maps induced by inclusion maps of cells in $\mathcal{T}$, are well-defined. For $\tau_1 \subset \tau_2 \in \mathcal{T}$, observe that $\St(\tau_2) \subset \St(\tau_1)$.
It is straightforward to check that the maps $\pi^{-1}_d(\St(\tau_2)) \hookrightarrow \pi^{-1}_d(\St(\tau_1))$ are cellular between maximal subcomplexes of the subdivisions. Hence the cosheaf information can be computed cellularly.

This is illustrated in Figure \ref{fig:divide}. On the left hand side, we see the preimage of an edge of the cellulation of $\mathbb{S}^1$ shown in red along with its subdivision. The closure, used in the sheaf computation, is shown in yellow and orange. The maximum subcomplex, is given by the dark red points (and the edges connecting them). Thus, the homology of the preimage of this edge is of rank 2 (i.e. consists of 2 components). On the right hand side, we see the case for a preimage of a vertex. The maximum subcomplex is again shown in red, but is bigger, but the homology is again rank 2.  

Finally, as the cap product is computed pointwise, we have all the information required to compute it. 
As all (co)homological computation is computed on the chain space of the subdivision, the orderings will be consistent with the initial complex which ensures each cocycle we use for the cap products are consistent.
Taking the cap product of the relative homology group with the relative cohomology group yields a map to homology which is isomorphic  to the homology group of the maximal subcomplex described above. 
Note that from a practical perspective, it is simpler from an implementation standpoint to perform the calculations in the sheaf on the subdivision thereby making all the chain groups directly relatable (i.e. without any deformation retracts needed in the maps).

\paragraph{Example:} As a simple example, we describe how this for the case of a subcomplex of $\mathbb{R}^2$ under a $\mathbb{Z}$-action, i.e. where the projection map is to $\mathbb{S}^1$. As described above, the triangulation of $\mathbb{S}^1$ in this case is the projection of all vertices in the complex in $\mathbb{R}^2$, and connect them in order (of the required coordinate) with edges into a simple cycle. 

We must first find the required preimages of the projection:
\begin{enumerate}
\item For the preimage of the open star of a vertex, take the simplices whose projection contains the vertex. This is simply the test of an intersection between an interval (the projection of the simplex to $\Sp^1$) and a point (the vertex coordinate).
\item For the preimage of an edge, it is similar but the test is for the intersection of two intervals. 
\item We observe that the open star of an edge is just the edge, while the open star of a vertex is the union of preimage of the vertex and its two adjacent edges.
\end{enumerate}

We refer to these subsets of simplices for each vertex and edge in $\Sp^1$ by $X(\sigma)\subset G$ where $G$ is periodic complex, and $\sigma \in \{v,e\}$ for vertices ($v$) and edges ($e$) in the triangulation of $\Sp^1$. Note that $X(\sigma)$ is  not a complex but $\Cl(X(\sigma))$ is. To compute the bisheaf, 
\begin{enumerate}
\item Take a subdivision $\Sub\;\Cl(X(\sigma))$
\item To compute the relative homology/cohomology at $\sigma$,  construct the relative chain complex
\[C_\bullet (\Sub\;\Cl(X(\sigma)), \Sub\;\Cl(X(\sigma)) - \Sub\; X(\sigma)) \]
and take homology/cohomology.
\item To compute the cosheaf, construct
\[Y(\sigma) = \{ \tau \in \Sub\; X(\sigma) : \partial \tau \in \Sub\; X(\sigma)\}\]
where $\partial \tau$ denotes the faces of $\tau$. The value of the cosheaf at $\sigma$ is $\Hg_\bullet (Y(\sigma))$.
\end{enumerate}
For both the sheaf and cosheaf, the inclusion restriction maps on the chain groups give the required morphisms and the standard simplicial cap product computation may be applied.  

We make two further observations:
\begin{enumerate}
\item These can be computed independently for each vertex and edge in the triangulation of $\Sp^1$.
\item By working with subdivision, the chain groups in the sheaf and cosheaf are comparable, namely any chain in the sheaf can be restricted to a chain in the cosheaf. 
\item The main complexity lies in the dimensionality of the translation action rather than the ambient space -- so in the case above, where the translation action is 1-dimensional, the consontruction is quite efficient. 
\end{enumerate}

We conclude this section with a summary of the main algorithmic difficulties when we consider higher dimensional translation actions:
\begin{itemize}
\item computing a  cellulation of the projection such that the preimage of each cell islocally constant;
\item finding the preimage of each cell.
\end{itemize}
We note that for $\torus^2$, computing a cellulation can be done through the computation of an arrangement, but we defer this question to future work.





\subsection{Isobisheafification} \label{sec:isobisheaf}

In this section we provide an explicit construction of a canonical isobisheaf and persistent local system associated to a bisheaf.
We achieve this through \textit{isobisheafification} algorithms, consisting of two parts: epification and monofication.

Recall that the epification of a sheaf $\Fsheaf$ is the largest sub-episheaf $\Esheaf\inj\Fsheaf$.
In Algorithm~\ref{alg:epi} we introduce our algorithm for epification.
The first while loop in Algorithm~\ref{alg:epi} establishes an ordering of cells.
If $X$ is a simplicial, cubical or polyhedral complex then we need not consider this loop, as we may simply set $V_\ell$ to be the $\ell$-dimensional cells.
In the second loop we first work backwards from the maximal cells in a process we consider as \textit{projecting onto a family of epimorphisms}.
However this leaves some morphisms not well-defined, i.e. potentially $\im\left(\Esheaf_j(\sigma\leq\tau)\right)\supsetneq\Esheaf_j'(\sigma)$.
Therefore, in the second half of the second while loop, we work forwards from the minimal cells to ensure that all morphisms are well-defined.
We consider this process as \textit{projecting onto a functor}.

\begin{algorithm}
\caption{Epification of a sheaf}
\label{alg:epi}
	\begin{algorithmic}
	\REQUIRE A sheaf $\Fsheaf$ valued in $\Ab$ over (finite) cell complex $X$
	\ENSURE The maximal sub-episheaf $\Esheaf\inj\Fsheaf$
	
	\STATE $\mathbb{V}\gets K$
	\STATE $\ell\gets 0$
	\WHILE{$\mathbb{V}\neq \emptyset$}
		\STATE $\ell\gets\ell+1$
		\STATE $V_\ell\gets\emptyset$
		\FOR{$\sigma\in \mathbb{V}$}
			\IF{$\sigma$ maximal with respect to face relations on $\mathbb{V}\cup\{\sigma\}$}
				\STATE $V_\ell\gets V_\ell\cup\{\sigma\}$
			\ENDIF
		\ENDFOR
		\STATE $\mathbb{V}\gets \mathbb{V}\setminus V_\ell$
	\ENDWHILE
	
	\STATE $\Fsheaf_0\gets\Fsheaf$
        \STATE $j\gets 0$
	\WHILE{$\Fsheaf_j$ not episheaf}
		\FOR{$i$ descending from $\ell$ \TO $1$}
			\FOR{$\sigma\in V_i$}
				\STATE $\Fsheaf'_j(\sigma)\gets \mathrm{Pullback}_{\tau:\sigma\leq\tau}\left[\im\left(\Fsheaf_j(\sigma\leq\tau)\right)\inj \Fsheaf_j(\sigma)\right]$
                    \STATE $\iota^\sigma_j:\Fsheaf'_j(\sigma)\inj\Fsheaf_j(\sigma)\gets$ induced by the pullback above
			\ENDFOR
		\ENDFOR
		\FOR{$i$ increasing from $1$ \TO $\ell$}
			\FOR{$\sigma\in V_i$}
                    \FOR{$\tau:\tau <\sigma$}
                        \STATE $\mathcal{E}_j^{\sigma\tau} \gets \mathrm{Pullback}\left(\left[\Fsheaf_j(\tau\leq\sigma):\Fsheaf_j(\sigma)\to\Fsheaf_j(\tau)\right],\left[\Fsheaf_{j+1}(\tau)\inj\Fsheaf'_j(\tau)\xhookrightarrow{\iota^\tau_j}\Fsheaf_j(\tau)\right]\right)$
                    \ENDFOR
                    \STATE $\mathcal{E}^{\sigma\sigma}_j\gets \Fsheaf'_j(\sigma)$
                    \STATE $\Fsheaf_{j+1}(\sigma)\gets\mathrm{Pullback}\left(\left[\iota^\sigma_j:\Fsheaf'_j(\sigma)\inj\Fsheaf_j(\sigma)\right],\mathrm{Pullback}_{\tau:\tau\leq\sigma}\left[\mathcal{E}_j^{\sigma\tau}\inj\Fsheaf_j(\sigma)\right]\right)$
				\STATE $\Fsheaf_{j+1}(\tau\leq\sigma)\gets$ induced by the pullback above
			\ENDFOR
		\ENDFOR
            \STATE $j\gets j+1$
	\ENDWHILE
	\STATE $\Esheaf\gets\Fsheaf_j$
	\RETURN $\Esheaf$
\end{algorithmic}
\end{algorithm}

\begin{remark}
Algorithm~\ref{alg:epi} naturally generalises to sheaves over any category with images and pullbacks, such as \textbf{Set}.
\end{remark}
In Appendix~\ref{sec:diagrams} we present the pullback diagrams described in Algorithm~\ref{alg:epi}.
If we consider the specific example $\Ab=\vect$, we may replace all pullbacks with intersections, which allows the following identifications.
\begin{itemize}
    \item $\Fsheaf_j'(\sigma)=\bigcap_{\tau:\sigma\leq\tau}\im\left(\Fsheaf_{j-1}(\sigma\leq\tau)\right)$ is the intersection of the images of maps entering $\sigma$.
    \item $\Fsheaf_j(\sigma)=\bigcap_{\tau:\tau\leq\sigma}\Fsheaf(\tau\leq\sigma)^{-1}\left(\Fsheaf_j'(\tau)\right)$ is the intersection of the preimage of $\Fsheaf_j'$ for all maps leaving $\sigma$.
    \item $\Fsheaf_j(\tau\leq\sigma)=\Fsheaf(\tau\leq\sigma)_{\Fsheaf_j(\sigma)}$ is the natural restriction map.
\end{itemize}

Recall now that the monofication of a cosheaf $\Fcosheaf$ is the smallest quotient-monocosheaf $\Fcosheaf\surj\Mcosheaf$.
In Algorithm~\ref{alg:mono} we now introduce our algorithm for monofication.
As with Algorithm~\ref{alg:epi}, the first while loop of Algorithm~\ref{alg:mono} may be omitted if $X$ is a simplicial, cubical or polyhedral complex.
As a dual concept to Algorithm~\ref{alg:epi}, however, in the first half of the second while loop we work backwards from the maximal cells to \textit{project onto a family of monomorphisms}, and in the second half we work forwards from the minimal cells to \textit{project onto a functor}.

\begin{algorithm}
\caption{Monofication of a cosheaf}
\label{alg:mono}
	\begin{algorithmic}
	\REQUIRE A cosheaf $\Fcosheaf$ valued in $\Ab$ under (finite) cell complex $X$
	\ENSURE The minimal quotient-monocosheaf $\Fcosheaf\surj\Mcosheaf$

	\STATE $\mathbb{V}\gets K$
	\STATE $\ell\gets 0$
	\WHILE{$\mathbb{V}\neq \emptyset$}
		\STATE $\ell\gets\ell+1$
		\STATE $V_\ell\gets\emptyset$
		\FOR{$\sigma\in \mathbb{V}$}
			\IF{$\sigma$ maximal with respect to face relations on $\mathbb{V}\cup\{\sigma\}$}
				\STATE $V_\ell\gets V_\ell\cup\{\sigma\}$
			\ENDIF
		\ENDFOR
		\STATE $\mathbb{V}\gets \mathbb{V}\setminus V_\ell$
	\ENDWHILE

	\STATE $\Fcosheaf_0\gets\Fcosheaf$
        \STATE $j\gets 0$
	\WHILE{$\Fcosheaf_j$ not monocosheaf}
		\FOR{$i$ descending from $\ell$ \TO $1$}
			\FOR{$\sigma\in V_i$}
				\STATE $\Fcosheaf'_j(\sigma)\gets \mathrm{Pushout}_{\tau:\sigma\leq\tau}\left[ \Fcosheaf_j(\sigma)\surj\mathrm{coim}\left(\Fcosheaf_j(\sigma\leq\tau)\right)\right]$
                    \STATE $\rho^\sigma_j:\Fcosheaf_j(\sigma)\surj\Fcosheaf'_j(\sigma)\gets$ induced from pushout above
			\ENDFOR
		\ENDFOR
		\FOR{$i$ increasing from $1$ \TO $\ell$}
			\FOR{$\sigma\in V_i$}
                    \FOR{$\tau:\tau<\sigma$}
                        \STATE $\mathcal{N}^{\sigma\tau}_j \gets \mathrm{Pushout}\left(\left[\Fcosheaf_j(\tau\leq\sigma):\Fcosheaf_j(\tau)\to\Fcosheaf(\sigma)\right],\left[\Fcosheaf_j(\tau)\xtwoheadrightarrow{\rho^\sigma_j}\Fcosheaf'_j(\tau)\surj\Fcosheaf_{j+1}(\tau)\right]\right)$
                    \ENDFOR
                    \STATE $\mathcal{N}^{\sigma\sigma}_j \gets \Fcosheaf'_j(\sigma)$
				\STATE $\Fcosheaf_{j+1}(\sigma)\gets \mathrm{Pushout}\left(\left[\rho^\sigma_j:\Fcosheaf_j(\sigma)\surj\Fcosheaf'_j(\sigma)\right],\mathrm{Pushout}_{\tau:\tau\leq\sigma}\left[\Fcosheaf_j(\sigma)\surj\mathcal{N}^{\sigma\tau}_j\right]\right)$
				\STATE $\Fcosheaf_{j+1}(\tau\leq\sigma)\gets$ induced by the pushout above
			\ENDFOR
		\ENDFOR
            \STATE $j\gets j+1$
	\ENDWHILE
	\STATE $\Mcosheaf \gets \Fcosheaf_j$
	\RETURN $\Mcosheaf$
\end{algorithmic}
\end{algorithm}

\begin{remark}
Algorithm~\ref{alg:mono} naturally generalises to sheaves over any category with kernels and pushouts, which in this case does not apply to \textbf{Set}.
\end{remark}
In Appendix~\ref{sec:diagrams} we also present the pushout diagrams described in Algorithm~\ref{alg:mono}.
If we again consider the specific example $\Ab=\vect$, we iteratively construct $\Mcosheaf$, as follows.
\begin{itemize}
    \item $\Kcosheaf_j'(\sigma)=\sum_{\tau:\sigma\leq\tau}\ker\left(\Fcosheaf_{j-1}(\sigma\leq\tau)\right)$ is the linear span of all kernels of maps out of $\sigma$.
    \item $\Kcosheaf_j(\sigma)=\sum_{\tau:\tau\leq\sigma}\Fcosheaf(\tau\leq\sigma)\left(\Kcosheaf_j'(\tau)\right)$ is the linear span of the image of all copies of $\Kcosheaf_j$ mapping into $\sigma$.
    \item $\Kcosheaf_j(\tau\leq\sigma)=\Fcosheaf(\tau\leq\sigma)_{\Kcosheaf_j(\sigma)}$ is the natural restriction map.
    \item $\Fcosheaf_j(\sigma)=\Fcosheaf(\sigma)/\Kcosheaf_j(\sigma)$ and $\Fcosheaf_j(\tau\leq\sigma)$ are the corresponding induced maps.
\end{itemize}

\begin{theorem} \label{thm:algorithms}
Suppose $K$ contains $n$ cells of maximum dimension $d\leq n$ and so that every cell has at most $c$ faces and $C$ cofaces.
Moreover, suppose $\Fsheaf$ and $\Fcosheaf$ are $\vect$-(co)sheaves with vector spaces bounded by dimension $D$ for each cell, and let $\omega$ be the exponent of matrix multiplication complexity.
\begin{enumerate}
\item Algorithm~\ref{alg:epi} computes the epification $\Esheaf\inj\Fsheaf$ in $O\left(n(d+ (c^\omega+C^\omega)D^{1+\omega})\right)$.
\item Algorithm~\ref{alg:mono} computes the monofication $\Fcosheaf\surj\Mcosheaf$ in $O\left(n(d+ (c^\omega+C^\omega)D^{1+\omega})\right)$.
\end{enumerate}
\end{theorem}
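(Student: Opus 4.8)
The plan is to prove part (1) in full and obtain part (2) by formal duality, and in each case to separate \emph{termination/correctness} from the \emph{runtime count}. Since the text preceding the theorem already identifies the output of Algorithm~\ref{alg:epi} with the maximal sub-episheaf via the pullback characterisation (and dually for Algorithm~\ref{alg:mono}), the substance I need to supply is: a monovariant proving the main \textbf{while} loop terminates, a bound on the cost of a single fiber update, and a bound on how many updates occur. Working over $\vect$ I would use the explicit descriptions given after the algorithm, namely that one pass replaces $\Fsheaf_j(\sigma)$ by $\bigcap_{\sigma\leq\tau}\im(\Fsheaf_{j-1}(\sigma\leq\tau))$ intersected with $\bigcap_{\tau\leq\sigma}\Fsheaf(\tau\leq\sigma)^{-1}(\Fsheaf_j'(\tau))$, so that $\Fsheaf_{j+1}(\sigma)\subseteq\Fsheaf_j(\sigma)$ for every cell.

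From this containment the monovariant is immediate: each fiber traces a strictly decreasing chain of subspaces of a space of dimension at most $D$, hence strictly decreases at most $D$ times, and a pass producing no strict decrease certifies that $\Fsheaf_j$ is already an episheaf and halts the loop. This gives termination unconditionally, which is exactly the fact invoked in Lemma~\ref{lem:unravel} and Theorem~\ref{thm:1d-toroidal}, and it caps the total number of fiber decreases over the whole run by $nD$. For the cost of updating a single cell $\sigma$, I would express the intersection of its $\leq c$ incoming images as a kernel computation: realise each $\im(\Fsheaf(\sigma\leq\tau))$ as the kernel of an annihilator, stack the annihilators into one matrix of size $O(cD)\times D$, and read off the intersection in a single rank/kernel computation at cost $O(c^{\omega}D^{\omega})$; symmetrically, stacking the $\leq C$ coface maps $\Fsheaf(\sigma)\to\Fsheaf(\tau)$ composed with projections computes the intersection of preimages in $O(C^{\omega}D^{\omega})$. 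Thus one update costs $O((c^{\omega}+C^{\omega})D^{\omega})$. The first \textbf{while} loop is a topological layering of the face poset by height, computable in $O(nd)$ time (and simply $O(n)$ when $X$ is simplicial, cubical or polyhedral, where $V_{\ell}$ is the set of $\ell$-cells), contributing the additive $nd$ term.

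Combining these, the stated bound $O\!\left(n(d+(c^{\omega}+C^{\omega})D^{1+\omega})\right)$ requires the total number of nontrivial fiber updates to be $O(nD)$: then $O(nD)$ updates times $O((c^{\omega}+C^{\omega})D^{\omega})$ per update yields the $n(c^{\omega}+C^{\omega})D^{1+\omega}$ term, and adding the $O(nd)$ layering finishes part (1). Part (2) is the exact dual: over $\vect$ Algorithm~\ref{alg:mono} replaces $\Fcosheaf_j(\sigma)$ by $\Fcosheaf(\sigma)/\Kcosheaf_j(\sigma)$ with $\Kcosheaf_j$ an increasing span of kernels and images, each kernel grows a subspace of dimension $\leq D$ at most $D$ times, and the pushout/coimage computations cost $O((c^{\omega}+C^{\omega})D^{\omega})$ by the same stacking; interchanging images with coimages, pullbacks with pushouts, and decreasing subfiber chains with increasing kernel chains transports the entire argument, so the identical bound follows.

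The hard part will be the update-count bookkeeping behind the $O(nD)$ claim, and this is where I would concentrate. The monovariant only guarantees $nD$ \emph{strict} decreases, whereas the loop as written re-sweeps every cell each pass; a naive charge of a full sweep per iteration, or a work-list that reprocesses a cell whenever any of its $\leq c+C$ neighbours changes, both introduce an extra factor (either an $n$ from the number of sweeps or a $(c+C)$ from neighbour re-examinations) that breaks the stated exponent. The plan is therefore to argue that the computation can be organised so that work is charged only to the $O(nD)$ strict fiber decreases — for instance by bounding the number of non-terminal sweeps by $D$ through a potential that is $[0,D]$-valued and strictly drops on every changing sweep, or by a careful lazy/work-list realisation in which each cell's recomputation is amortised against its own next strict decrease. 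Pinning down this amortisation precisely, rather than the routine linear algebra of the per-update cost, is the main obstacle, and I would flag explicitly that the bound is stated in the regime where the layering term $nd$ is dominated (equivalently $d\leq D$) so that the two contributions combine as written.
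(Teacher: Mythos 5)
Your outline follows the paper's proof in its skeleton: the $O(nd)$ bound for the layering loop, termination via the monovariant that every non-final pass strictly drops the dimension of some stalk (hence at most $nD$ drops in total), the per-cell update cost $O((c^\omega+C^\omega)D^\omega)$ by stacked matrix computations, and part (2) by duality. But there is one genuine gap: you declare correctness already settled by ``the text preceding the theorem,'' and it is not. That text only specifies the algorithm and its $\vect$-instantiation; nothing there proves that the output is the \emph{maximal} sub-episheaf. The paper's proof devotes its entire second half to exactly this. Writing $\Esheaf_{\mathrm{max}}$ for the maximal sub-episheaf, it first observes that the output $\Esheaf$ is an episheaf with $\Esheaf\inj\Esheaf_{\mathrm{max}}\inj\Fsheaf$, and then proves by induction on the pass index $j$ that $\Esheaf_{\mathrm{max}}(\sigma)\subseteq\Fsheaf_j(\sigma)$ for every cell $\sigma$: the step for $\Fsheaf'_{j+1}(\sigma)$ uses that $\Esheaf_{\mathrm{max}}$ is an episheaf, so that $\bigcap_{\tau:\sigma\leq\tau}\Fsheaf(\sigma\leq\tau)\left(\Esheaf_{\mathrm{max}}(\tau)\right)=\Esheaf_{\mathrm{max}}(\sigma)$, while the step for $\Fsheaf_{j+1}(\sigma)$ uses a preimage containment valid for any sheaf. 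Termination alone only yields \emph{some} sub-episheaf contained in $\Fsheaf$ (the zero sheaf is always one such), so without this induction you have not computed the epification. You must supply this argument, and its dual for monofication.

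On the runtime bookkeeping, your diagnosis of the difficulty is accurate, but you should know the paper does not do better: its proof bounds the number of while-loop iterations by $nD$ and then multiplies by the cost of a \emph{single} cell update, $O((c^\omega+C^\omega)D^\omega)$ --- precisely the accounting you call naive, charging one update per iteration even though the pseudocode re-sweeps all $n$ cells in every pass (a literal sweep count gives an extra factor of $n$). So the amortisation or work-list reformulation you propose is a genuine sharpening that the paper leaves implicit, and you are right that something of that kind is needed to justify the stated exponent for the algorithm as written. However, since you leave it flagged as ``the main obstacle'' rather than carried out, your proposal as it stands also does not establish the claimed bound; either complete the charging argument (work charged only to strict fiber decreases) or state explicitly, as the paper implicitly does, that each iteration is costed as one cell update. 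Your closing remark about needing a regime $d\leq D$ is unnecessary: the two contributions are combined additively inside the bound, so no such assumption is required.
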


Note that $d$ refers to the geometric dimension of a cell and $D$ refers to the algebraic dimension of a vector space.

\begin{corollary}
Algorithm~\ref{alg:epi} and Algorithm~\ref{alg:mono} combine to compute the isobisheafification of $\Fbisheaf$ in polynomial time.
\end{corollary}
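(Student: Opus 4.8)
The plan is to read the corollary off from Theorem~\ref{thm:algorithms} together with one cheap additional step. Recall that the isobisheafification of $\Fbisheaf=(\Fsheaf,\Fcosheaf,F)$ decomposes into three pieces: epifying the sheaf $\Fsheaf$ to the episheaf $\Esheaf$, monofying the cosheaf $\Fcosheaf$ to the monocosheaf $\Mcosheaf$, and supplying the induced morphisms $I_\sigma$ that complete the isobisheaf $\Ibisheaf=(\Esheaf,\Mcosheaf,I)$. The first two pieces are precisely the outputs of Algorithm~\ref{alg:epi} and Algorithm~\ref{alg:mono}, so I would first invoke Theorem~\ref{thm:algorithms} to bound each of them by $O\!\left(n(d+(c^\omega+C^\omega)D^{1+\omega})\right)$, which is polynomial in the size of the input bisheaf.

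Next I would account for the induced maps. From the canonical commutative diagram defining isobisheafification, each $I_\sigma$ is the composite
\[
\Esheaf(\sigma)\inj\Fsheaf(\sigma)\xrightarrow{F_\sigma}\Fcosheaf(\sigma)\surj\Mcosheaf(\sigma),
\]
where the inclusion and the surjection are exactly the data that Algorithm~\ref{alg:epi} and Algorithm~\ref{alg:mono} produce (in the $\vect$ case, as explicit restriction/quotient matrices). Thus, once bases for $\Esheaf(\sigma)$ and $\Mcosheaf(\sigma)$ are in hand, computing the matrix of $I_\sigma$ is a product of three matrices of dimension at most $D$, costing $O(D^\omega)$ per cell and hence $O(nD^\omega)$ in total. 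This is dominated by the epification/monofication bound, so the three pieces together remain polynomial, and summing the costs gives an overall bound of $O\!\left(n(d+(c^\omega+C^\omega)D^{1+\omega})\right)$.

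The step I expect to require the most care is not the asymptotics but the bookkeeping: I must ensure that the bases chosen by the two algorithms are tracked consistently, so that the inclusion $\Esheaf(\sigma)\inj\Fsheaf(\sigma)$ and the quotient $\Fcosheaf(\sigma)\surj\Mcosheaf(\sigma)$ can actually be written down as matrices at no more than the stated cost, rather than merely shown to exist abstractly. Since both algorithms operate independently on the sheaf and cosheaf components and each maintains explicit generators cell-by-cell, composing them introduces no interaction blow-up, and the corollary follows.
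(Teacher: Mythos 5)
Your proof is correct and takes essentially the same route as the paper, which states this corollary as an immediate consequence of Theorem~\ref{thm:algorithms} with no further argument. Your explicit accounting for the induced morphisms $I_\sigma$ --- as composites $\Esheaf(\sigma)\inj\Fsheaf(\sigma)\xrightarrow{F_\sigma}\Fcosheaf(\sigma)\surj\Mcosheaf(\sigma)$ costing $O(D^\omega)$ per cell, dominated by the epification/monofication bound --- fills in a small detail the paper leaves implicit, and your point about tracking bases consistently is exactly why that step is cheap in the $\vect$ setting, where both algorithms maintain explicit generators cell by cell.
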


\begin{proof}[Proof of Theorem~\ref{thm:algorithms}]
To show the complexity, we study the two while loops of each algorithm.
At the $j^\mathrm{th}$ iteration of the first loop we may ignore cells of dimension greater than $d-j+1$, and in worst case scenario we must look at $n-j+1$ cells.
Thus, there are at worst $d$ total iterations and at most $n$ cells saved in each iteration, so the first loop terminates in $O(nd)$.

Each iteration of the second loop must reduce the dimension of at least one one vector space of the (co)sheaf until it terminates.
In the worst case scenario, this means we must iterate through the second loop $nD$ times -- one for each dimension of each vector space.
Further, calculating intersections, kernels, images and preimages are all bounded by matrix multiplication time (c.f. \cite{bunch1974triangular,milosavljevic2011zigzag}).
Each kernel, image and preimage is calculated from spaces of dimension bounded by dimension $D$, so these calculations have a complexity $O(D^\omega)$.
Taking the intersection or linear sum of $k$ such spaces (where $k\leq c$ in one half of the loop and $k\leq C$ in the other) will have computational complexity $O(k^\omega D^\omega)$.
Thus the second loop contributes $O\left(n(c^\omega+C^\omega)D^{1+\omega}\right)$.
Combining the two loops gives the desired complexity.
In particular, both algorithms must terminate.

Finally, we prove correctness of Algorithm~\ref{alg:epi}, as a dual argument completes the proof for Algorithm~\ref{alg:mono}.
By construction, Algorithm~\ref{alg:epi} must output an episheaf, as otherwise the second loop will never terminate.
Let $\Esheaf_\mathrm{max}$ be the maximal subepisheaf of $\Fsheaf$.
By construction, the output $\Esheaf$ of Algorithm~\ref{alg:epi} satisfies $\Esheaf\inj\Esheaf_\mathrm{max}\inj\Fsheaf$.
Since we work over $\vect$, this means $\Esheaf(\sigma)\subseteq\Esheaf_\mathrm{max}(\sigma)$ for each cell $\sigma$.
Conversely, we show by induction on $j$, the number of iterations on the second while loop of Algorithm~\ref{alg:epi} that $\Esheaf_\mathrm{max}(\sigma)\subseteq \Fsheaf_j(\sigma)$ for each $\sigma$, which completes the proof, since all maps of $\Esheaf,\Fsheaf_j,\Esheaf_\mathrm{max}$ are restrictions of maps of $\Fsheaf$.
Indeed, when $j=0$ we have $\Fsheaf_0=\Fsheaf$, and by definition of $\Esheaf_\mathrm{max}$ being a subepisheaf this means $\Esheaf_\mathrm{max}(\sigma)\subseteq\Fsheaf_0(\sigma)$ for each $\sigma$.
Now assume for some $r\geq0$ that $\Esheaf_\mathrm{max}(\sigma)\subseteq\Fsheaf_r(\sigma)$ for each $\sigma$.
Then
\[
\Fsheaf_{r+1}'(\sigma)=\bigcap_{\tau:\sigma\leq\tau}\im\left(\Fsheaf_{r}(\sigma\leq\tau)\right) = \bigcap_{\tau:\sigma\leq\tau}\Fsheaf(\sigma\leq\tau)\left(\Fsheaf_r(\tau)\right) \supseteq \bigcap_{\tau:\sigma\leq\tau}\Fsheaf(\sigma\leq\tau)\left(\Esheaf_\mathrm{max}(\tau)\right) = \Esheaf_\mathrm{max}(\sigma)
\]
where the containment comes from the assumption that $\Esheaf_\mathrm{max}\inj \Fsheaf_r$ and the final inequality holds because $\Esheaf_\mathrm{max}$ is an episheaf.
Hence $\Esheaf_\mathrm{max}(\sigma)\subseteq \Fsheaf'_{r+1}(\sigma)$ for each $\sigma$.
Likewise,
\[
\Fsheaf_{r+1}(\sigma)=\bigcap_{\tau:\tau\leq\sigma}\Fsheaf(\tau\leq\sigma)^{-1}\left(\Fsheaf_{r+1}'(\tau)\right) \supseteq \bigcap_{\tau:\tau\leq\sigma}\Fsheaf(\tau\leq\sigma)^{-1}\left(\Esheaf_\mathrm{max}(\tau)\right) = \Esheaf(\sigma)
\]
where the final equality is true for all sheaves.
In particular, if $\Esheaf_\mathrm{max}(\sigma)\subseteq\Fsheaf_r(\sigma)$ for each $\sigma$ this means $\Esheaf_\mathrm{max}(\sigma)\subseteq\Fsheaf_{r+1}(\sigma)$.
\end{proof}

\begin{remark}
The bounds for computational complexity in Theorem~\ref{thm:algorithms} can almost certainly be reduced, and we only intend this result to provide a proof of concept.
Among other things, this bound does not factor in the highly parallelizable nature of computations using sheaves, and we leave further optimization to future work.
\end{remark}

\section{From Local Systems to Toroidal Cycles}


In this section we outline how one can in practice extract the topological information of $K$ from a finite quotient $G$.

\subsection{Extracting the relevant data}

Suppose we have calculated the $d$ persistent local systems associated to $G$ as in Section~\ref{sec:full-case}.

Consider the PLS $\local_i(G)$ which has an isomorphism for each face relation.
We can entirely describe $\local_i(G)$ by an invertible matrix $M_i$ representing the monodromy of $G$.
Explicitly, if $\Sp^1$ has a cellulation with $\ell$ vertices, then given the sequence of face relations connecting $0$ and $1$
\begin{equation}
  v_1 \geq e_{12} \leq v_2 \geq e_{23} \leq \cdots \leq v_\ell \geq e_{\ell 1} \leq v_1  
  \label{eq:zigzag}
\end{equation}

we define
\[
M_i = \local_i(G)(e_{\ell 1} \leq v_1)^{-1}\circ \local_i(G)(e_{\ell 1} \leq v_\ell) \circ \cdots \circ \local_i(G)(e_{12} \leq v_2)^{-1} \circ \local_i(G)(e_{12} \leq v_1)
\]
The $1$-eigenvectors of $M_i$ correspond exactly to the toroidal cycles of $G$, as they are the homology classes that wrap around $\Sp^1$ and connect to themselves.
Similarly, the $1$-eigenvectors of of $M_i^k$ correspond to the toroidal cycles of the $k$-fold cover of $G$, $G_{i,k}$, whose $i^\mathrm{th}$ coordinate is contained in $[0,k]/0\sim k$.

We anticipate that under our assumption that $K$ is a locally compact and paracompact cellular complex, for some value of $k$ the embeddings in Theorem~\ref{thm:1d-toroidal} and Theorem~\ref{thm:toroidal} will in fact be an isomorphism $\iota(\Hg_{\bullet+1}(G_{i,k})/\Ig_{\bullet+1})\cong\local_i(G)$.
This would mean all classes of $\local_i(G)$ must represent a toroidal cycle of $G_{i,k}$ for some value of $k$.
Equivalently, we believe that $M_i$ is a root of the identity matrix, and therefore all eigenvalues must be roots of unity.
Instead of creating larger complexes, this would mean we are able to recover information about toroidal cycles at all scales simply by reading off the eigenvalues and Jordan block decomposition of $M_i$.
\begin{conjecture}
The eigenvalues of $M_i$ must be roots of unity.
\end{conjecture}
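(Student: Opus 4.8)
The plan is to realize $M_i$ as the action of the deck transformation $\tran_i$ on the stalk of the persistent local system, and then to constrain its spectrum using the finiteness of $G$ together with the integral (combinatorial) structure of $K$. Concretely, set $R=\F[t^{\pm 1}]$ and let $t$ act on the chains of $K$ by the translation $\tran_i$. Because $G=\hat K_i/\langle\tran_i\rangle$ is finite, the cellular chain complex $C_\bullet(K;\F)$ is a complex of finitely generated free $R$-modules, so $\Hg_\bullet(K;\F)$ is a finitely generated $R$-module on which $t$ acts. Since $\R$ is contractible, $\local(K)$ is a \emph{constant} colocal system whose stalk carries a residual $t$-action, and by Lemma~\ref{lem:unravel} the monodromy $M_i$ of $\local_i(G)$ is naturally identified with this $t$-action. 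As the stalk is finite-dimensional over $\F$, it is an $R$-torsion module, so $M_i$ satisfies some $p(t)\in R$; the conjecture is then equivalent to the assertion that the minimal polynomial of $M_i$ has only roots of unity as its roots, i.e.\ that $M_i$ is quasi-unipotent.

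First I would try to upgrade torsion-finiteness to a genuine bound on the orbit $\{M_i^n v\}$. The support arguments used in Lemma~\ref{lem:1d-nontoroidal} and Theorem~\ref{thm:1d-toroidal} show that each PLS class is represented by a lift supported on finitely many fundamental domains; functoriality of the isobisheafification (Lemma~\ref{lem:unravel}) should then show that $\tran_i$ carries this finite collection of representative cells to another such collection, so that $M_i$ and $M_i^{-1}$ both preserve the integral lattice $\Lambda$ spanned by the cells of a fundamental domain. Over a field of characteristic $0$ this makes $M_i$ conjugate to an element of $\mathrm{GL}(\Lambda)$, whence its eigenvalues are algebraic integers with algebraic-integer inverses; if in addition the orbit is bounded — equivalently, all eigenvalues lie on the unit circle — then Kronecker's theorem forces every eigenvalue to be a root of unity.

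A complementary route, which I would pursue in parallel, is through the cyclic covers $G_{i,k}$. The remark preceding the conjecture identifies the toroidal cycles of $G_{i,k}$ with the $1$-eigenvectors of $M_i^k$, and the embedding of Theorem~\ref{thm:1d-toroidal} is expected to become an isomorphism $\iota(\Hg_{\bullet+1}(G_{i,k})/\Ig_{\bullet+1})\cong\local_i(G)$ for $k$ large. If one can prove this surjectivity for some $k$, then $M_i^k$ fixes a spanning set of the stalk, so $M_i^k=\mathrm{id}$ and the eigenvalues are $k$-th roots of unity outright (the stronger ``root of the identity'' statement). This reduces the conjecture to showing that genuine toroidal classes eventually exhaust the PLS as the cover grows.

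The main obstacle, in either route, is establishing the boundedness/unit-circle input rather than mere torsion: an arbitrary invertible torsion $R$-module has eigenvalues that are arbitrary nonzero scalars, so finiteness of $G$ alone is insufficient. One must genuinely exploit that $\local_i(G)$ is the \emph{image} of $I$ — simultaneously a quotient of the relative-homology episheaf and a subobject of the homology monocosheaf — in order to control how $\tran_i$ permutes representatives, and transporting this control through the subquotient (rather than through all of $\Hg_\bullet(K;\F)$, whose $t$-action may exhibit Jordan behaviour that does not obviously descend) is delicate. A second difficulty is positive characteristic, where Kronecker's theorem fails; there I would instead lift $K$ to an integral cellular model, prove quasi-unipotence of the characteristic-$0$ monodromy, and argue that the annihilating factor $t^k-1$ survives reduction modulo $p$, so that the reduced eigenvalues remain roots of unity.
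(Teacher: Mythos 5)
The statement you are addressing is not proved in the paper at all: it is stated as an open conjecture, immediately after the authors say they ``anticipate'' that the embedding of Theorem~\ref{thm:1d-toroidal} becomes an isomorphism $\iota(\Hg_{\bullet+1}(G_{i,k})/\Ig_{\bullet+1})\cong\local_i(G)$ for some $k$. So the only question is whether your argument closes the conjecture, and it does not; both of your routes stop exactly where the genuine difficulty begins. In the first route, identifying $M_i$ with the action of $\tran_i$ on a stalk of $\local(K)$ and showing that it preserves an integral lattice $\Lambda$ would only give that its eigenvalues are algebraic integers that are units. That is strictly weaker than the conjecture: $\mathrm{GL}(\Lambda)$ contains hyperbolic elements such as $\begin{bsmallmatrix} 2&1\\1&1 \end{bsmallmatrix}$, whose eigenvalues are units but not roots of unity. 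Kronecker's theorem requires in addition that all Galois conjugates lie on the unit circle, i.e.\ boundedness of the orbits $\{M_i^n v\}$, and you explicitly acknowledge that this input is never established. Your positive-characteristic patch has the same shape: it presupposes quasi-unipotence of a characteristic-zero lift, which is the unproved statement itself. There is also a structural step you flag but do not carry out: $M_i$ acts on a subquotient (the image of $I$ inside the monocosheaf, after epification and monofication), not on $\Hg_\bullet(\hat{K}_i;\F)$, and one must show that the $\Z$-equivariant structure descends to this subquotient compatibly with the local identifications of Lemma~\ref{lem:unravel} before any statement about the $t$-action transfers to $M_i$.

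The second route is circular relative to the paper. The isomorphism $\iota(\Hg_{\bullet+1}(G_{i,k})/\Ig_{\bullet+1})\cong\local_i(G)$ for some $k$ is precisely the authors' unproven anticipation, and it visibly implies the conjecture (if $M_i^k$ fixes a spanning set of the stalk then $M_i^k=\mathrm{id}$); reducing the conjecture to this statement is reducing it to a strictly stronger open claim, not proving it. What your write-up does contribute is a correct and useful reformulation --- that the conjecture is equivalent to quasi-unipotence of the deck-transformation action on the PLS stalk, and that torsion-finiteness over $\F[t^{\pm 1}]$ alone cannot suffice --- but as it stands it is a research plan with the two essential steps (unit-circle control of the spectrum, or surjectivity of the toroidal-cycle embedding for some finite cover) left open.
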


Now, having determined $M_i$, take a basis $\mathcal{B}_{v_1}$ of the $1$-eigenspace of $M_i$ at $\local_{i}(G)(v_1)$.
For each other cell of $\Sp^1$, map $\mathcal{B}_{v_1}$ to the (linearly independent) collection of vectors $\mathcal{B}_{\sigma}$ of $\local_{i}(G)(\sigma)$ via the sequence of face relations in Equation~\ref{eq:zigzag}.
For each $\sigma$, choose a (linearly independent) lift of $\mathcal{B}_{\sigma}$ to $\Esheaf_i(\sigma)_\bullet$ in the preimage of the cap product map.
This step is easily implemented in Sage with the preimage command, for instance, and the commutative diagrams of the isobisheaf $\Ibisheaf_i$ ensures this step is well defined.

Finally, we use the fact that all maps $\Esheaf_i(G)(e\leq v)$ are induced by inclusion and sum all local chains together to recover a toroidal cycle.
Note that the insistence to use the 1-eigenspace in the first step ensures we recover a cycle.
Moreover, all steps in this pipeline are well defined, since if we use a different choice of lift for an cells in the second step then the output cycle will differ by at most a non-toroidal cycle.
This means we recover a toroidal cycle basis independent of input conditions.

Of course, one may not only want toroidal cycles but also the dual concepts of cycles in $K$ which are trivial when mapped to $G$.
This is a much more difficult question to answer -- how can we recover information that we aren't directly presented with?
For $\Hg_0(K)$, thankfully no such cycles will exist, however the problem exists in degree $1$ and above.
For $\Hg_1(K)$, \cite{onus2022quantifying} show that all disappearing cycles can be recovered algorithmically as a ``commutator type'' sum of toroidal 1-cycles.
We conjecture that a similar procedure can be used to recover cycles in higher dimensions.
For example, suppose $K=(\R^2\times\Z)\cup(\Z\times\R\times\Z)\cup(\Z\times\R^2)$ is the collection of planes at integer coordinates in $\R^3$.
Then $K$ is 3-periodic and the smallest quotient space $G$ is a subset of $(\R/\Z)^3\cong\torus^3$.
Under this construction the 2-chain $[0,1]^2\times\{0\}$ and all its coordinate permutations are toroidal 2-cycles of $G$, and the boundary of $[0,1]^3$ is a cycle of $K$ which vanishes in $G$ and can be written in terms of these $2$-cycles and their translations.
See Figure~\ref{fig:toroidal-builds-disappearing} for a visualization.

\begin{figure}
    \centering
    \includegraphics[width=0.4\linewidth]{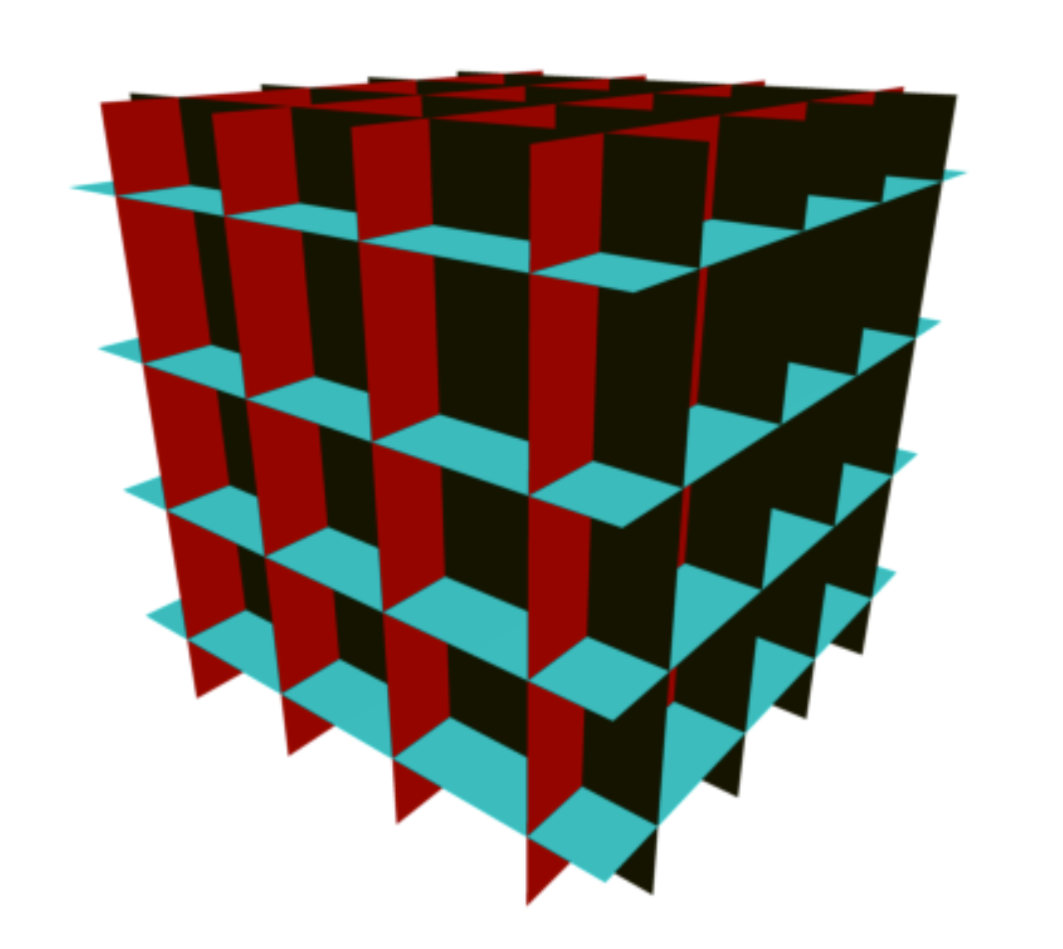}
    \label{fig:toroidal-builds-disappearing}
    \caption{A segment of the space $K=(\R^2\times\Z)\cup(\Z\times\R\times\Z)\cup(\Z\times\R^2)$ built from intersecting planes with integer $x$ (in red), $y$ (shaded) and $z$ (in cyan) coordinates. The boundary of the box $[0,1]^3$ and all its translated copies are non-trivial cycles in $K$ but are trivial when quotienting by the maximal translational group $\Z^3$. }
\end{figure}

\begin{conjecture}
All cycles in $K$ which are trivial in $G$ can be written in terms of lifts of toroidal cycles.
\end{conjecture}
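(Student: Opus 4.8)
The plan is to first translate the statement into homological algebra and then realise the resulting generators geometrically. Write $q_* : \Hg_\bullet(K) \to \Hg_\bullet(G)$ and recall that a cycle of $K$ is trivial in $G$ precisely when its class lies in $\ker(q_*)$. Because the $\Z^d$-action is free, $C_\bullet(G) = C_\bullet(K)_{\Z^d}$ is the module of coinvariants, so I would analyse $q_*$ through the Cartan--Leray spectral sequence $E^2_{p,q} = \Hg_p(\Z^d;\Hg_q(K)) \Rightarrow \Hg_{p+q}(G)$ of the cover $q$. The edge map already identifies the ``obvious'' part of the kernel: since $q\circ\tran_i = q$ for each generator $\tran_i$ of $T$, one has $q_*\big((\tran_i)_* - 1\big)x = 0$, so the augmentation-ideal submodule $I_{\Z^d}\Hg_\bullet(K) = \langle((\tran_i)_*-1)x\rangle$ lies in $\ker(q_*)$. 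The first goal is to show that, together with the classes produced by the spectral-sequence differentials $d_r$ into the bottom row, these generate \emph{all} of $\ker(q_*)$, and that each such generator is geometrically a closed combination of lifts of toroidal cycles.

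Second, I would reduce to one periodic direction at a time, dualising Proposition~\ref{prop:toroidal-compression}. Decomposing $q = q_1\circ\cdots\circ q_d$ into single $\Z$-quotients and inducting on $d$, at each stage I would use the Wang-type short exact sequence $0 \to C_\bullet(K') \xrightarrow{\tran-1} C_\bullet(K') \to C_\bullet(K'/\langle\tran\rangle) \to 0$ and its long exact sequence, which gives $\ker(q'_*) = \im(\tran_*-1)$ in each degree, while $\coker(q'_*)$ in degree $n$ is the $\tran_*$-invariants $\ker(\tran_*-1)$ acting on $\Hg_{n-1}(K')$ — the toroidal classes one degree lower. This is exactly the bookkeeping used in Lemma~\ref{lem:1d-nontoroidal} and Theorem~\ref{thm:1d-toroidal}, so the isobisheaf machinery supplies chain-level representatives compatible with $\local_i(G)$.

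Third comes the geometric core. Given a generator $(\tran_i-1)[a]$ of $\ker(q_*)$, I would build a ``prism'' chain $P_i$ interpolating between a representative and its translate $\tran_i a$, chosen so that its projection $q(P_i)$ closes up in $G$ into a toroidal cycle wrapping the $i$-th circle of $\torus^d$; thus $P_i$ is a lift of a toroidal cycle with $\D P_i = \tran_i a - a$. For $d\ge 2$ the individual boundaries $\tran_i a - a$ are not yet closed, so I would assemble the lifts into the Koszul, or ``higher commutator,'' combination $\sum_{i}\pm(\tran_i-1)\tilde\gamma_i$ and more generally $\sum_{I}\pm\big(\prod_{i\in I}(\tran_i-1)\big)\tilde\gamma_I$, whose total boundary vanishes by the relations $(\tran_i-1)(\tran_j-1) = (\tran_j-1)(\tran_i-1)$. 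The unit-cube boundary $\D[0,1]^3 = \sum_i \pm(\tran_i-1)(\text{face}_i)$ of the running planes example is precisely this Koszul cycle, and it is the model I would generalise: the syzygies among the operators $\tran_i-1$ are exactly what force an \emph{a priori} non-closed sum of toroidal-cycle lifts to become a genuine disappearing cycle.

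The hard part will be this third step, promoting the homological identity to a chain-level construction. The spectral sequence only asserts that the relevant classes are built from $E^2$-terms through differentials; turning this into an explicit closed sum of toroidal-cycle lifts requires a chain-level splitting — a contracting homotopy for the Koszul complex of $T$ acting on $C_\bullet(K)$ — compatible with the isobisheaf representatives, together with control of the non-closed boundaries of the individual lifts. I also expect a genuine hypothesis to be unavoidable: disjoint translates such as $\Sp^1\times\Z$ with $\Z$ acting on the second factor produce disappearing cycles like $a - \tran a$ while admitting \emph{no} toroidal cycles at all, so the conjecture can hold only under an assumption — most naturally that $K$ is connected, or that $T$ is the maximal translation group — preventing disappearing cycles that arise purely by permuting components. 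Pinning down the precise such hypothesis, and then proving that under it the Koszul construction exhausts $\ker(q_*)$, is where the real work lies.
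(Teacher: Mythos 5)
You should first be aware that the statement you were handed is an open \emph{conjecture}: the paper offers no proof, only the degree-one result of \cite{onus2022quantifying} and the intersecting-planes example as evidence, so your proposal can only be judged on its own merits. On those merits it is a research plan rather than a proof, as you acknowledge: your first two steps are correct but routine --- the short exact sequence $0\to C_\bullet(K)\xrightarrow{\tran-1}C_\bullet(K)\to C_\bullet(G)\to 0$ does give $\ker(q_*)=\im(\tran_*-1)$ in each degree and identifies $\coker(q_*)$ in degree $n$ with $\ker\left(\tran_*-1\right)$ acting on $\Hg_{n-1}(K)$, and the reduction to one direction at a time mirrors Proposition~\ref{prop:toroidal-compression} --- while the entire content of the conjecture sits in your third step, which you leave open. (One slip: the Cartan--Leray differentials relevant to $\ker(q_*)$ land in the left-hand column $E^r_{0,q}$, the coinvariants, not in the bottom row.) In fact the chain-level work you flag as the hard part is easier than you think: if a toroidal $n$-cycle $\gamma$ exists in $G$ and $a$ is any $n$-cycle of $K$, then $\tilde\gamma+a$ and $\tilde\gamma+\tran a$ are two lifts of the single toroidal cycle $\gamma+q(a)$, and their difference is exactly the disappearing cycle $(\tran-1)a$; since the Wang sequence says every disappearing class has this form, the real question is not how to assemble the lifts but whether toroidal cycles \emph{exist in the right degree} at all.

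That is where your counterexample analysis --- the most valuable part of your proposal --- must be pushed further, because your proposed repair fails. Your example $K=\Sp^1\times\Z$ does refute the literal statement (nonzero classes $[c]-[\tran c]$ in $\ker(q_*)$, yet $q_*$ surjective in every degree, hence no toroidal cycles to combine), but connectedness does not rescue the conjecture: disappearing $n$-cycles come from $\im(\tran_*-1)$ on $\Hg_n(K)$ whereas toroidal $n$-cycles correspond to $\ker(\tran_*-1)$ on $\Hg_{n-1}(K)$, one degree lower, and there is no implication between the two. Concretely, let $K$ be the real line with a $2$-sphere wedged on at every integer point: $K$ is connected, $1$-periodic, locally compact and paracompact; $\tran_*$ shifts the generators of $\Hg_2(K)\cong\bigoplus_{\Z}\F$, so $[S_0]-[S_1]$ is a nonzero disappearing $2$-cycle; but $\Hg_1(K)=0$, so $G$ has no toroidal $2$-cycles (its only toroidal cycle is the degree-one circle class), and no linear combination of lifts of toroidal cycles --- which are $1$-chains --- can represent a nonzero class of $\Hg_2(K)$. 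So the conjecture is false as stated even for connected $K$ once the degree exceeds one (degree one is safe for connected $K$ because $\tran_*$ is automatically trivial on $\Hg_0(K)\cong\F$, which is exactly why the result of \cite{onus2022quantifying} holds). The genuine open problem, which neither your plan nor the paper resolves, is to find a natural geometric hypothesis forcing $\ker(\tran_*-1)\neq 0$ on $\Hg_{n-1}(K)$ whenever $\im(\tran_*-1)\neq 0$ on $\Hg_n(K)$ --- equivalently, toroidal cycles degreewise wherever disappearing cycles exist --- and then your prism/Koszul assembly goes through.
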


\subsection{Examples}

We now illustrate the process above with three worked examples which highlight how and why we detect toroidal cycles through persistent local systems.

\subsubsection{The Running Example}

Recall the running example in Figure~\ref{fig:running-example}.
In Section~\ref{sec:prelim} we constructed the associated bisheaf, whose corresponding isobisheaf $(\Esheaf_1,\Mcosheaf_0,I)$ is
\[
\begin{tikzcd}[ampersand replacement=\&,column sep = 6em, row sep=3em]
\cdots 
\& \F^5
	\arrow[two heads,l]
	\arrow[two heads,r,"{\scalemath{\scalesize}{\begin{bsmallmatrix} 1&0&1&0&0\\0&0&1&0&0\\0&0&0&0&1\\0&0&0&1&0 \end{bsmallmatrix}}}" description]
	\arrow[two heads,d,"{\scalemath{\scalesize}{\begin{bsmallmatrix} 1&1&1&1&1  \end{bsmallmatrix}}}" description]
\& \F^4
	\arrow[two heads,d,"{\scalemath{\scalesize}{\begin{bsmallmatrix} 1&1&1&1  \end{bsmallmatrix}}}" description]
\& \F^5
	\arrow[two heads,l,swap,"{\scalemath{\scalesize}{\begin{bsmallmatrix} 1&1&0&0&0\\0&0&0&1&0\\0&1&0&0&0\\0&0&0&0&1 \end{bsmallmatrix}}}" description]
	\arrow[two heads,r,"{\scalemath{\scalesize}{\begin{bsmallmatrix} 1&0&1&0&0\\0&0&1&0&0\\0&0&0&0&1\\0&0&0&1&0 \end{bsmallmatrix}}}" description]
	\arrow[two heads,d,"{\scalemath{\scalesize}{\begin{bsmallmatrix} 1&1&1&1&1 \end{bsmallmatrix}}}" description]
\& \F^4
	\arrow[two heads,d,"{\scalemath{\scalesize}{\begin{bsmallmatrix} 1&1&1&1  \end{bsmallmatrix}}}" description]
\& \cdots
	\arrow[two heads,l]
\\
\cdots 
	\arrow[r,"{id}" description]
\& \F
\& \F
	\arrow[l,swap,"{id}" description]
	\arrow[r,"{id}" description]
\& \F
\& \F
	\arrow[l,swap,"{id}" description]
	\arrow[r,"{id}" description]
\& \cdots
\end{tikzcd}
\]
In this case the persistent local system $\local(G)_0$ is constant and 1-dimensional, so Theorem~\ref{thm:1d-toroidal} says $\Hg_1(G)/\Ig_1=\F$.
There are many choices of classes of $\Esheaf$ which map to this class via $I$, all of which are represented by toroidal 1-cycles in $G$ which lift in $K$ to infinite paths from $-\infty$ to $\infty$, any two of which differ by copies of the non-toroidal triangles.
In particular, the horizontal line $\R\times\{1\}$ is one such toroidal cycle.

\subsubsection{An example for why we need bisheaves} \label{sec:needing-sheaves}

Consider the 1-periodic shape $K$ in Figure~\ref{fig:torsion}.
Explicitly, the outer blue cylinder is the set
\[
\R\times \left\{(y,z)\,:\,y^2+z^2=1\right\},
\]
the two inner red tubes are the sets
\[
\left\{\left(t,\pm\frac{1}{2}\cos(\pi t)+\frac{1}{5}\cos(2\pi s), \pm\frac{1}{2}\sin(\pi t)+\frac{1}{5}\sin(2\pi s)\right)\,:\,t,s\in\R\right\}
\]
and the yellow discs are the sets
\[
\left\{(t,s,n)\,:\,t^2+s^2\leq 1\right\}\bigg\backslash \left\{(t,s,n)\,:\,\left(t\pm\frac{1}{2}\right)^2+s^2<\frac{1}{25}\right\}
\]
for $n\in\Z$.
Take a cellulation of $\Sp^1$ (or $\R$) which has one vertex located below every yellow disc and one vertex half way between two discs, and consider a quotient space $G$ of $K$.
In this case both the degree~1 and degree~2 bisheaves will be non-trivial, so both 1- and 2-cycles may be toroidal.

\begin{figure}[ht]
    \centering
    \includegraphics[width=0.52\linewidth]{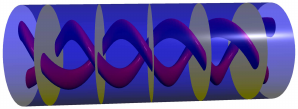}
    \includegraphics[width=0.43\linewidth]{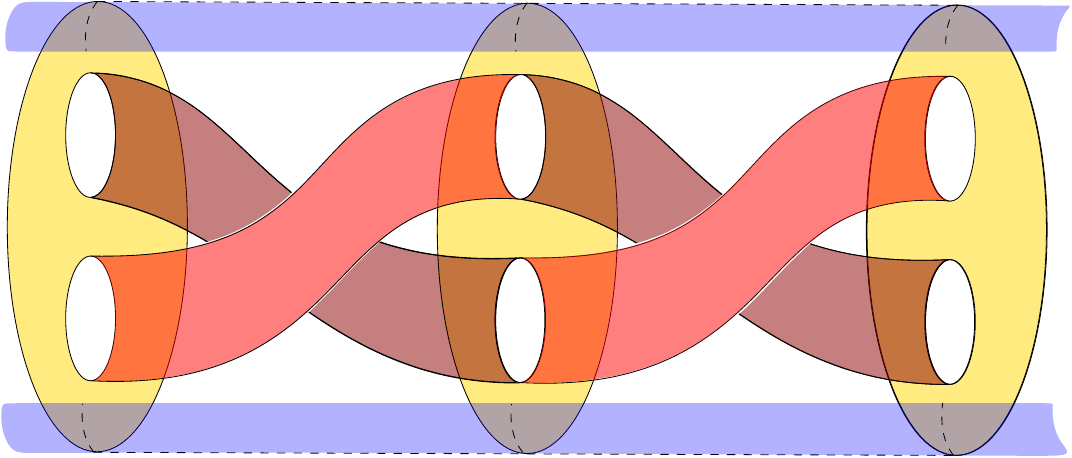}
    \caption{Left: a 2-dimensional 1-periodic stratified space in $\R^3$. The shape is the union of an outer cylinder $\R\times\Sp^1$ (in blue), two inner twisting tubes (in red) and discs at integer intervals (in yellow) with holes removed where they intersect the inner tubes. Right: an interior diagram of the shape.}
    \label{fig:torsion}
\end{figure}

The degree~1 isobisheaf $(\Esheaf_1,\Mcosheaf_0,I)$ will take the form below
\[
\begin{tikzcd}[ampersand replacement=\&,column sep = 5em, row sep=3em]
\cdots 
\& \F^4
	\arrow[two heads,l]
	\arrow[two heads,r,"{\scalemath{\scalesize}{\begin{bsmallmatrix} 1&0&0&0\\0&1&0&0\\0&0&1&0 \end{bsmallmatrix}}}" description]
	\arrow[two heads,d,"{\scalemath{\scalesize}{\begin{bsmallmatrix} 1&1&1&0 \end{bsmallmatrix}}}" description]
\& \F^3
	\arrow[d,"{\scalemath{\scalesize}{\begin{bsmallmatrix} 1&1&1 \end{bsmallmatrix}}}" description]
\& \F^3
	\arrow[two heads,l,"{id}" description]
	\arrow[two heads,r,"{\scalemath{\scalesize}{\begin{bsmallmatrix} 1&0&0\\0&0&1\\0&1&0 \end{bsmallmatrix}}}" description]
	\arrow[d,"{\scalemath{\scalesize}{\begin{bsmallmatrix} 1&1&1 \end{bsmallmatrix}}}" description]
\& \F^3
	\arrow[d,"{\scalemath{\scalesize}{\begin{bsmallmatrix} 1&1&1 \end{bsmallmatrix}}}" description]
\& \F^4
	\arrow[two heads,l,"{\scalemath{\scalesize}{\begin{bsmallmatrix} 1&0&0&1\\0&1&0&-1\\0&0&1&0 \end{bsmallmatrix}}}" description]
	\arrow[d,"{\scalemath{\scalesize}{\begin{bsmallmatrix} 1&1&1&0 \end{bsmallmatrix}}}" description]
	\arrow[two heads,r]
\& \cdots
\\
\cdots 
	\arrow[r,"{id}" description]
\& \F
\& \F
	\arrow[l,"{id}" description]
	\arrow[r,"{id}" description]
\& \F
\& \F
	\arrow[r,"{id}" description]
	\arrow[l,"{id}" description]
\& \F
\& \cdots
	\arrow[l,"{id}" description]
\end{tikzcd}
\]
In this case $\local(G)_0$ is again constant and 1-dimensional, so $\Hg_1(G)/\Ig_1 = \F$.
One such representative is a horizontal line across the outer blue tube, although one may also choose to take a horizontal path along either of the red tubes instead.
The difference of any pair of these cycles is non-toroidal, and this is captured in $\Esheaf_1$.
In particular, this difference can be written in terms of non-toroidal 1-cycles corresponding to paths which start on a yellow disc, travel to the next yellow disc along the blue tube, and return to the starting point via one of the red tubes.
Note, however, that the non-toroidal 1-cycles formed by the cross section of each tube pass to trivial classes of all components of the isobisheaf.

On the other hand, the degree~2 isobisheaf $(\Esheaf_2,\Mcosheaf_1,I)$ will take the form
\[
\begin{tikzcd}[ampersand replacement=\&,column sep = 5em, row sep=3em]
\cdots 
\& \F^4
	\arrow[two heads,l]
	\arrow[two heads,r,"{\scalemath{\scalesize}{\begin{bsmallmatrix} 1&0&0&0\\0&1&0&0\\0&0&1&0 \end{bsmallmatrix}}}" description]
	\arrow[two heads,d,"{\scalemath{\scalesize}{\begin{bsmallmatrix} 1&0&1&0\\0&1&1&0 \end{bsmallmatrix}}}" description]
\& \F^3
	\arrow[two heads, d,"{\scalemath{\scalesize}{\begin{bsmallmatrix} 1&0&1\\0&1&1 \end{bsmallmatrix}}}" description]
\& \F^3
	\arrow[two heads,l,"{id}" description]
	\arrow[two heads,r,"{\scalemath{\scalesize}{\begin{bsmallmatrix} 0&1&0\\1&0&0\\0&0&1 \end{bsmallmatrix}}}" description]
	\arrow[two heads, d,"{\scalemath{\scalesize}{\begin{bsmallmatrix} 1&0&1\\0&1&1 \end{bsmallmatrix}}}" description]
\& \F^3
	\arrow[two heads, d,"{\scalemath{\scalesize}{\begin{bsmallmatrix} 1&0&1\\0&1&1 \end{bsmallmatrix}}}" description]
\& \F^4
	\arrow[two heads,l,"{\scalemath{\scalesize}{\begin{bsmallmatrix} 1&0&0&1\\0&1&0&1\\0&0&1&-1 \end{bsmallmatrix}}}" description]
	\arrow[two heads, d,"{\scalemath{\scalesize}{\begin{bsmallmatrix} 1&0&1&0\\0&1&1&0 \end{bsmallmatrix}}}" description]
	\arrow[two heads,r]
\& \cdots
\\
\cdots 
	\arrow[r]
\& \F^2
\& \F^2
	\arrow[l,"{id}" description]
	\arrow[r,"{id}" description]
\& \F^2
\& \F^2
	\arrow[r,"{id}" description]
	\arrow[l,"{\scalemath{\scalesize}{\begin{bsmallmatrix} 0&1\\1&0 \end{bsmallmatrix}}}" description]
\& \F^2
\& \cdots
	\arrow[l]
\end{tikzcd}
\]
In this case $\local(G)_1$ is 2-dimensional, and in particular, the two generating classes correspond exactly to the non-toroidal cross-sections of the red tubes which weren't detected by $\Esheaf_1$.
However, not all maps of $\local(G)_1$ are identity, so we need to do more work in order to detect torooidal cycles.
By composing all maps, the path from the vertex under one yellow disc to it's adjacent copy induces the endomorphism
\[
A=\begin{bmatrix} 0&1\\1&0 \end{bmatrix}
\]
where $A^{2k}=id$ and $A^{2k+1}=A$ for each $k\in\N$.
Thus for quotient space where we quotient out by even unit translations, this means we have $\Hg_1(G)/\Ig_1 = \F^2$, where we may represent the toroial 2-cycles by the two red tubes.
However, when we quotient out by odd unit translations, our toroidal 2-cycles correspond to the $1$-eigenspace of $A$, which is $1$-dimensional, so $\Hg_1(G)/\Ig_1 = \F$.
The generating toroidal cycle can be represented by the \textit{sum} of the two red tubes -- which in this case will be identified as one tube in the quotient -- or otherwise by the outer blue cylinder.
The connection between the two cases is that if we begin with the cross-section of one red tube and build a toroidal cycle from this, if we quotient with respect to translation by an odd number of units, then we must wrap twice around $G$ before we will create a boundaryless cycle.

This is also an example of why we require the bisheaf construction and cannot simply detect toroidal cycles with the cap product of $G$.
Suppose, for instance, that $G$ is the minimal quotient space of $K$, where we identify all points one unit translation away from each other so that the quotient space only has a single yellow disc.
Then the difference of both inner cycles of the disc (where the red tubes intersect) is the boundary of one of the red tubes, meaning the sum of these cycles is 0 in characteristic 2.
But the cap product of the outer blue cylinder with the orientation of $\Sp^1$ is homologous to the outer circle of the yellow disc, which is in turn homologous to the sum of the inner cycles.
So in $G$ for $\F$ in characteristic~2, the cap product maps the blue cylinder to 0 as well as non-toroidal cycles, however in the bisheaf this map is non-zero and we still recover that the cylinder is a toroidal 2-cycle.
One can also extend this example to find further counterexamples for homology with field (or even ring) coefficients which are not characteristic~2 by incorporating torsion in the form of a ``Klein Bottle twist'' to one of the red tubes between any pair of yellow disks.

\subsubsection{An example in higher dimensions}

Consider $K$, the Schwarz~P minimal surface in Figure~\ref{fig:schwarz}, which is a 3-periodic surface in $\R^3$ satisfying the equation
\begin{equation}
P(x)=\cos(2\pi x)+\cos(2\pi y)+\cos(2\pi z)=0.
\label{eq:schwarz}
\end{equation}
\begin{figure}[ht]
    \centering
    \includegraphics[scale=1.5]{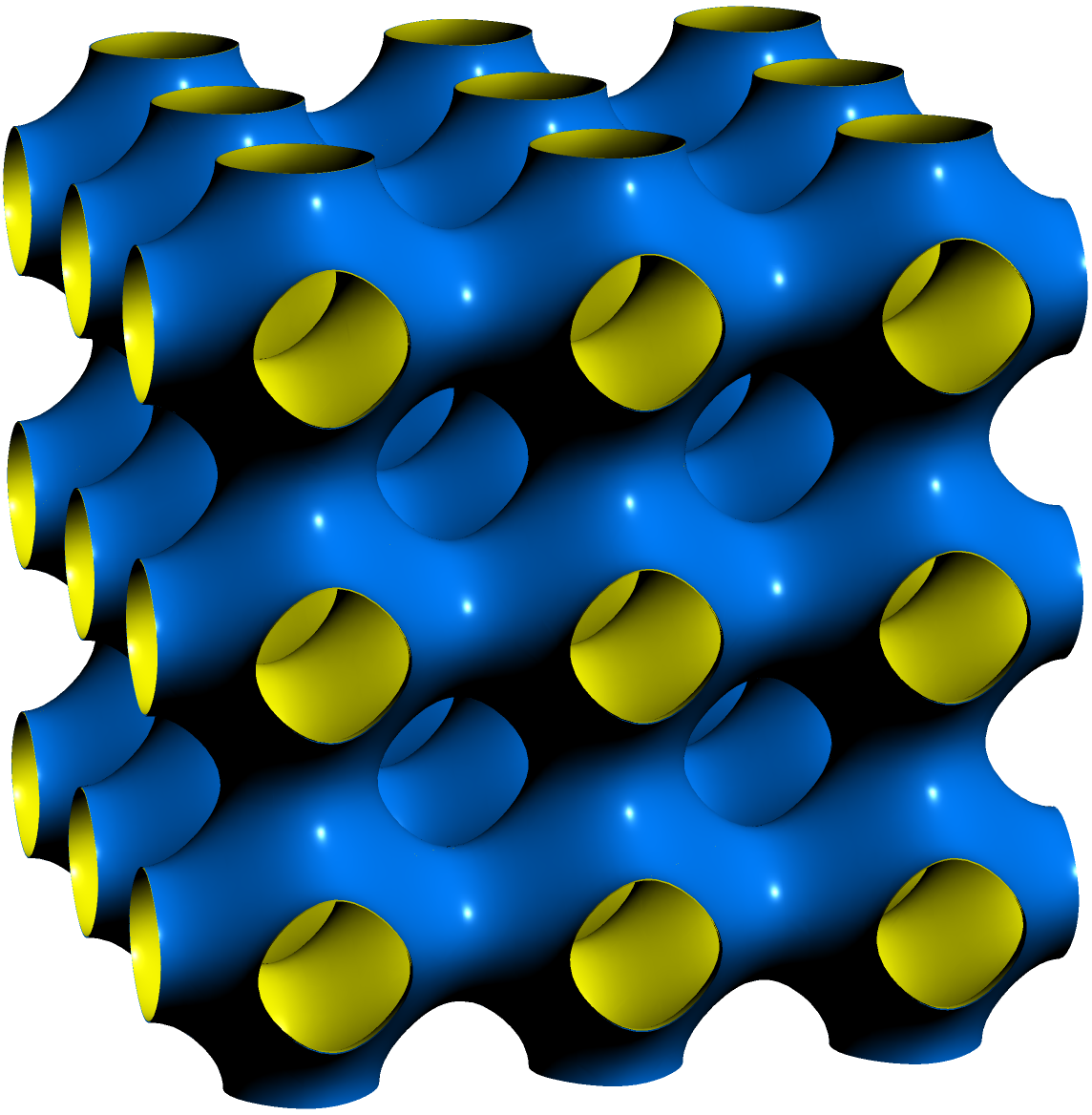}
    \caption{The segment of the Schwarz~P minimal surface in $\R^3$ which satisfies Equation~\ref{eq:schwarz}. The surface is oriented, with the exterior coloured blue and the interior coloured yellow.}
    \label{fig:schwarz}
\end{figure}

Take the quotient space of $K$ with respect to $n$-integer translations.
That is, let $G$ be $K\cap [0,n]^3$ with periodic boundary conditions imposed, and take a cellulation of $\Sp^1$ with vertices at integer and half-integer coordinates, and give $\torus^3$ the product of this cellulation.
Let us look for toroidal 2-cycles.
While $K$ is 3-periodic, the rotational symmetries of the surface embedding mean that all three bisheaves around the coordinates of $\Sp^1$ will be identical, and likewise for all three bisheaves around $\torus^2$.
The bisheaf around $\torus^3$ will also have trivial PLS, since the cap product of $2$-cells with the (degree~3) orientation of $\torus^3$ will always be $0$.
Thus we need only look at one 1-periodic and one 2-periodic PLS.

The 1-periodic isobisheaf will take the following form
\[
\begin{tikzcd}[ampersand replacement=\&,column sep = 5em, row sep=3em]
\cdots 
\& \F
	\arrow[two heads,l,"{id}" description]
	\arrow[two heads,r,"{id}" description]
	\arrow[two heads,d,"{id}" description]
\& \F
	\arrow[two heads,d,"{id}" description]
\& \F
	\arrow[two heads,l,"{id}" description]
	\arrow[two heads,r,"{id}" description]
	\arrow[two heads,d,"{id}" description]
\& \F
	\arrow[two heads,d,"{id}" description]
\& \F
	\arrow[two heads,l,"{id}" description]
	\arrow[two heads,r,"{id}" description]
	\arrow[two heads,d,"{id}" description]
\& \cdots
\\
\cdots 
	\arrow[r,"{id}" description]
\& \F
\& \F
	\arrow[l,"{id}" description]
	\arrow[r,"{id}" description]
\& \F
\& \F
	\arrow[r,"{id}" description]
	\arrow[l,"{id}" description]
\& \F
\& \cdots
	\arrow[l,"{id}" description]
\end{tikzcd}
\]
where both the episheaf and monocosheaf are (co)local systems.
In particular, the single class of the PLS $\local_1(G)_1$ is represented by the sum of cycles at each cross-section of the surface, and in turn we easily recover that the toroidal 2-cycle is (uniquely) represented by the entire surface.

Similarly, the episheaf component of the 2-periodic isobisheaf is simply a local system
\[
\begin{tikzcd}[ampersand replacement=\&,column sep = 4em, row sep=2.5em]
\ddots 
\& \vdots
\& \vdots
\& \vdots
\& \vdots
\& \vdots
\&
\\
\cdots 
\& \F
	\arrow[l,"{id}" description]
	\arrow[u,"{id}" description]
	\arrow[d,"{id}" description]
	\arrow[r,"{id}" description]
\& \F
	\arrow[u,"{id}" description]
	\arrow[d,"{id}" description]
\& \F
	\arrow[l,"{id}" description]
	\arrow[u,"{id}" description]
	\arrow[d,"{id}" description]
	\arrow[r,"{id}" description]
\& \F
	\arrow[u,"{id}" description]
	\arrow[d,"{id}" description]
\& \F
	\arrow[l,"{id}" description]
	\arrow[u,"{id}" description]
	\arrow[d,"{id}" description]
	\arrow[r,"{id}" description]
\& \cdots
\\
\cdots 
\& \F
	\arrow[l,"{id}" description]
	\arrow[r,"{id}" description]
\& \F
\& \F
	\arrow[l,"{id}" description]
	\arrow[r,"{id}" description]
\& \F
\& \F
	\arrow[l,"{id}" description]
	\arrow[r,"{id}" description]
\& \cdots
\\
\cdots 
\& \F
	\arrow[l,"{id}" description]
	\arrow[u,"{id}" description]
	\arrow[d,"{id}" description]
	\arrow[r,"{id}" description]
\& \F
	\arrow[u,"{id}" description]
	\arrow[d,"{id}" description]
\& \F
	\arrow[l,"{id}" description]
	\arrow[u,"{id}" description]
	\arrow[d,"{id}" description]
	\arrow[r,"{id}" description]
\& \F
	\arrow[u,"{id}" description]
	\arrow[d,"{id}" description]
\& \F
	\arrow[l,"{id}" description]
	\arrow[u,"{id}" description]
	\arrow[d,"{id}" description]
	\arrow[r,"{id}" description]
\& \cdots
\\
\cdots 
\& \F
	\arrow[l,"{id}" description]
	\arrow[r,"{id}" description]
\& \F
\& \F
	\arrow[l,"{id}" description]
	\arrow[r,"{id}" description]
\& \F
\& \F
	\arrow[l,"{id}" description]
	\arrow[r,"{id}" description]
\& \cdots
\\
\cdots 
\& \F
	\arrow[l,"{id}" description]
	\arrow[u,"{id}" description]
	\arrow[d,"{id}" description]
	\arrow[r,"{id}" description]
\& \F
	\arrow[u,"{id}" description]
	\arrow[d,"{id}" description]
\& \F
	\arrow[l,"{id}" description]
	\arrow[u,"{id}" description]
	\arrow[d,"{id}" description]
	\arrow[r,"{id}" description]
\& \F
	\arrow[u,"{id}" description]
	\arrow[d,"{id}" description]
\& \F
	\arrow[l,"{id}" description]
	\arrow[u,"{id}" description]
	\arrow[d,"{id}" description]
	\arrow[r,"{id}" description]
\& \cdots
\\
\phantom{v}
\& \vdots
\& \vdots
\& \vdots
\& \vdots
\& \vdots
\& \ddots
\end{tikzcd}
\]
However, the PLS $\local_{\{1,2\}}(G)_0$ will be trivial.
One can see this for instance by considering what occurs at the vertex $v$ of $\torus^2$ located at the origin.
Here, the map $G\supsetneq\pi^{-1}(\St(v))\to \St(v)$ is not surjective since $P(x)>1>0$ in some neighbourhood of the origin. Hence the cap product of any cycle of $\pi^{-1}(\St(v))$ with the pullback of any orientation cocycle with support in this neighbourhood will be zero.

This is an example of why we need to look at 1-periodic PLS's, $\local_{i}(G)$ for $i=1,2,3$, in order to detect toroidal cycles.
The entire surface is a toroidal 2-cycle, however because it is not a 2-cycle of the ambient 3-torus, it will not be detected by the cap product map.
So even though the 2-cycle is 3-periodic, what we really detect is that its cross-sections in all three directions are 1-cycles of the ambient torus. 
That being said, looking at these higher-periodicity PLS's like $\local_{\{i,j\}}(G)$ for $1\leq i<j\leq 3$ will help us learn more about the shape of our periodic space.
In this case, the fact $\local_{\{1,2\}}(G)$ is trivial here helps us distinguish the Schwarz P surface from the periodic space in Figure~\ref{fig:toroidal-builds-disappearing} which will have toroidal 2-cycles represented in the 2-dimensional PLS.

Another point to notice in all of these examples is that the local construction of the PLS is (locally) independent of the quotient space chosen.
For 1-periodic spaces this was an essential property of covering space theory which helped us prove Theorem~\ref{thm:1d-toroidal}, but in general for a $d$-periodic space this will not be the case for $\local_{L}(G)$ with $|L|<d$.
Indeed, for the Schwarz~P minimal surface, if we calculated $\local_1(G)_0$ we would find a dependence on $n$.
Ideally, we would like this theory to produce results entirely independent of the choice of quotient space, and while this is not yet complete we believe this can still be achieved, and we leave the study of this to future work.

\section*{Acknowledgements}\label{ackref}
The authors would like to thank Amit Patel for numerous discussion, suggestions, and  clarifications. 
A.O.~would also like to thank Ulrike Tillman for her insightful comments, which helped simplify the approach in Section~\ref{sec:toroidal} when they were invited to present this work at the University of Oxford.
A.O.~acknowledges the support of the Additional Funding Programme for Mathematical Sciences, delivered by EPSRC (EP/V521917/1) and the Heilbronn Institute for Mathematical Research.

\bibliographystyle{plainurl} 
\bibliography{refs}

\appendix

\section{Existence of Persistent Local Systems} \label{sec:PLS-lemmas}

In order to prove the existence of persistence local systems (PLS's) in arbitrary abelian categories, we first establish the existence of a maximal sub-episheaf.
We will show that given any two sub-episheaves there is a third sub-episheaf which is larger than both.
Then existence of a maximal episheaf follows by a standard application of Zorn's Lemma on the poset category of sub-episheaves (c.f. Section~3 of \cite{macpherson2021persistent}).

\begin{lemma} \label{lem:bigger-epi}
Given two sub-episheaves $\Esheaf_1\inj\Fsheaf$ and $\Esheaf_2\inj\Fsheaf$ there exists another sub-episheaf $\Esheaf_3\inj\Fsheaf$ such that $\Esheaf_1\inj\Esheaf_3\hookleftarrow\Esheaf_2$.
\end{lemma}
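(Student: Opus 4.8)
The plan is to take $\Esheaf_3$ to be the cell-wise \emph{sum} (categorical join) of the subobjects $\Esheaf_1$ and $\Esheaf_2$ inside $\Fsheaf$. Concretely, for each cell $\sigma$ I would set $\Esheaf_3(\sigma)\inj\Fsheaf(\sigma)$ to be the image of the morphism $\Esheaf_1(\sigma)\oplus\Esheaf_2(\sigma)\to\Fsheaf(\sigma)$ induced by the two given monomorphisms. Since $\Ab$ is abelian this join exists, is a subobject of $\Fsheaf(\sigma)$, and carries canonical monomorphisms $\Esheaf_1(\sigma)\inj\Esheaf_3(\sigma)$ and $\Esheaf_2(\sigma)\inj\Esheaf_3(\sigma)$ compatible with their inclusions into $\Fsheaf(\sigma)$.

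First I would check that $\Esheaf_3$ is a sub-sheaf of $\Fsheaf$, i.e.\ that each face map $\Fsheaf(\sigma\leq\tau)$ restricts to $\Esheaf_3$. Because $\Esheaf_1$ and $\Esheaf_2$ are themselves sub-sheaves, $\Fsheaf(\sigma\leq\tau)$ carries $\Esheaf_i(\tau)$ into $\Esheaf_i(\sigma)$ for $i=1,2$; hence it carries the join $\Esheaf_3(\tau)$ into the join $\Esheaf_3(\sigma)$, giving a well-defined restriction $\Esheaf_3(\sigma\leq\tau)$ and making the inclusion $\Esheaf_3\inj\Fsheaf$ a morphism of sheaves which is cell-wise monic.

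Second, and this is the crux, I would show $\Esheaf_3$ is an episheaf, i.e.\ each $\Esheaf_3(\sigma\leq\tau)$ is an epimorphism in $\Ab$. The image of $\Esheaf_3(\sigma\leq\tau)$ contains the image of its precomposition with $\Esheaf_1(\tau)\inj\Esheaf_3(\tau)$; by naturality this equals the image of $\Esheaf_1(\sigma\leq\tau)$ postcomposed with $\Esheaf_1(\sigma)\inj\Esheaf_3(\sigma)$, which is all of $\Esheaf_1(\sigma)$ since $\Esheaf_1$ is an episheaf. The identical argument yields $\Esheaf_2(\sigma)$. Thus $\im\big(\Esheaf_3(\sigma\leq\tau)\big)$ contains both $\Esheaf_1(\sigma)$ and $\Esheaf_2(\sigma)$ as subobjects of $\Esheaf_3(\sigma)$, hence contains their join $\Esheaf_3(\sigma)$ itself, so $\Esheaf_3(\sigma\leq\tau)$ is epic. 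Finally the canonical cell-wise monomorphisms assemble into sheaf monomorphisms $\Esheaf_1\inj\Esheaf_3\hookleftarrow\Esheaf_2$, completing the construction.

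The main obstacle is carrying out the ``contains both subobjects, hence contains the join'' manipulations purely diagrammatically, without recourse to elements, since we work in an arbitrary abelian category $\Ab$. The two facts I would isolate and invoke are: (i) the join of subobjects is the image of the induced map out of the coproduct, and it behaves monotonically, so a morphism sends the join of two subobjects of its source into the join of their images; and (ii) a morphism whose image, as a subobject of its codomain, equals the whole codomain is an epimorphism. Both are standard, but stating them carefully is exactly what replaces the elementwise reasoning that would suffice over $\vect$.
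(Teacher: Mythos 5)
Your proof is correct, and it constructs exactly the same object as the paper: since images in $\sheafcat(X)$ are computed cell-wise, your join $\Esheaf_3(\sigma)=\im\bigl(\Esheaf_1(\sigma)\oplus\Esheaf_2(\sigma)\to\Fsheaf(\sigma)\bigr)$ is precisely the paper's $\sheaf{\im(f_1\oplus f_2)}$ for the induced map $f_1\oplus f_2:\Esheaf_1\oplus\Esheaf_2\to\Fsheaf$. Where you genuinely differ is in verifying that $\Esheaf_3$ is an episheaf. The paper first proves an auxiliary lemma -- the biproduct $\Esheaf_1\oplus\Esheaf_2$ is itself an episheaf, via the universal property of the coproduct -- and then transfers epimorphy through the factorization $\Esheaf_1\oplus\Esheaf_2\surj\Esheaf_3\inj\Fsheaf$: each restriction map of $\Esheaf_3$, precomposed with the cell-wise epimorphism onto the image, equals a composite of epimorphisms and is therefore epic by cancellation. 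You instead argue inside the subobject lattice of $\Esheaf_3(\sigma)$: the image of $\Esheaf_3(\sigma\leq\tau)$ contains both $\Esheaf_1(\sigma)$ and $\Esheaf_2(\sigma)$, hence their join, which is all of $\Esheaf_3(\sigma)$, and a morphism with full image is epic. Your route avoids the auxiliary episheaf lemma and is more direct, at the price of setting up the join calculus, your facts (i) and (ii); fact (ii) is exactly where abelianness enters (zero cokernel implies epic) and plays the role of the paper's appeal to Freyd's Theorem~2.17 that a map is epic onto its image. Both arguments also work in comparable generality beyond abelian categories: the paper notes its proof applies, e.g., to $\mathbf{Set}$, and yours does too provided the image factorization has an epic first factor, which is what (ii) amounts to. The one step you should spell out is the commutativity of the squares relating $\Esheaf_i(\sigma\leq\tau)$, $\Esheaf_3(\sigma\leq\tau)$ and the inclusions $\Esheaf_i\inj\Esheaf_3$ -- what you call naturality -- which follows by cancelling the monomorphism $\Esheaf_3(\sigma)\inj\Fsheaf(\sigma)$; the same cancellation argument shows that your cell-wise monomorphisms assemble into morphisms of sheaves.
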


Note that in full generality, the following proof applies to sheaves valued in any category where all images, equalizers and binary coproducts exist, and all canonical embedding maps of coproducts are monic.
This includes non-abelian categories like $\mathbf{Set}$.

\begin{proof}
$\sheafcat(X)$ is abelian, so all binary biproducts (in particular coproducts) exist.
Given $f_1:\Esheaf_1\inj\Fsheaf$ and $f_2:\Esheaf_2\inj\Fsheaf$ this means there is a canonical map $f_1\oplus f_2:\Esheaf_1\oplus\Esheaf_2\to\Fsheaf$ and $\sheaf{\im(f_1\oplus f_2)}\inj\Fsheaf$.
Let $\Esheaf_3:=\sheaf{\im(f_1\oplus f_2)}$; we will show that $\Esheaf_3$ is an episheaf, and that there are canonical monomorphisms $\Esheaf_1\inj\Esheaf_3\hookleftarrow\Esheaf_2$, which concludes the proof.

First, for $i=1,2$, let $j_i:\Esheaf_i\inj\Esheaf_1\oplus\Esheaf_2$ be the canonical embedding morphisms.
We notice $\sheaf{\im(f_i)}\cong\Esheaf_i$ since $f_i$ is monic, so by the universality condition of the image, there is a unique monomorphism $g_i:\Esheaf_i\inj\Esheaf_3$ which makes the following diagram commute.
\begin{center}
\begin{tikzcd}[column sep={2.2cm,between origins},row sep={1.8cm,between origins}]
	\Esheaf_i &  \Esheaf_1\oplus\Esheaf_2\\
	\Fsheaf & \Esheaf_3
    \arrow[hook, "f_i", from=1-1, to=2-1]
    \arrow[hook, "j_i", from=1-1, to=1-2]
    \arrow[hook', from=2-2, to=2-1]
    \arrow[hook, dashed, "g_i", from=1-1, to=2-2]
    \arrow[two heads, "f_1\oplus f_2", from=1-2, to=2-2]
\end{tikzcd}
\end{center}

Next, we show that if $\Esheaf_1$ and $\Esheaf_2$ are episheaves then so is $\Esheaf_1\oplus\Esheaf_2$.
Indeed, consider the following commutative diagrams where the map $\Esheaf_1\oplus\Esheaf_2(\tau\leq\sigma)$ is the unique map from $\Esheaf_1(\sigma)\oplus\Esheaf_2(\sigma)\to\Esheaf_1(\tau)\oplus\Esheaf_2(\tau)$ induced by $\Esheaf_1(\tau\leq\sigma)$ and $\Esheaf_2(\tau\leq\sigma)$.
\begin{center}
\begin{tikzcd}[column sep={2.6cm,between origins},row sep={2cm,between origins}]
	\Esheaf_1(\sigma) &  \Esheaf_1(\sigma)\oplus\Esheaf_2(\sigma) & \Esheaf_2(\sigma) \\
	\Esheaf_1(\tau) & \Esheaf_1(\tau)\oplus\Esheaf_2(\tau) & \Esheaf_2(\tau)
    \arrow[hook, "j_1(\sigma)", from=1-1, to=1-2]
    \arrow[hook', "j_2(\sigma)"', from=1-3, to=1-2]
    \arrow[hook, "j_1(\tau)", from=2-1, to=2-2]
    \arrow[hook', "j_2(\tau)"', from=2-3, to=2-2]
    \arrow[two heads, "\Esheaf_1(\tau\leq\sigma)"', from=1-1, to=2-1]
    \arrow[two heads, "\Esheaf_2(\tau\leq\sigma)", from=1-3, to=2-3]
    \arrow[from=1-1, to=2-2]
    \arrow[from=1-3, to=2-2]
    \arrow[dashed, "\Esheaf_1\oplus\Esheaf_2(\tau\leq\sigma)"  description, from=1-2, to=2-2]
\end{tikzcd}
\end{center}
To show $\Esheaf_1\oplus\Esheaf_2(\tau\leq\sigma)$ is epic, suppose $\alpha,\beta:\Esheaf_1(\tau)\oplus\Esheaf_2(\tau) \to A$ are such that $\alpha\circ\left[\Esheaf_1\oplus\Esheaf_2(\tau\leq\sigma)\right]=\beta\circ\left[\Esheaf_1\oplus\Esheaf_2(\tau\leq\sigma)\right]$.
Precomposing with $j_i$ for $i=1,2$ and chasing the above diagram, it follows that 
\[
\alpha\circ j_i(\tau)\circ\Esheaf_i(\tau\leq\sigma)=\alpha\circ\left[\Esheaf_1\oplus\Esheaf_2(\tau\leq\sigma)\right]\circ j_i(\sigma)=\beta\circ\Esheaf_1\oplus\Esheaf_2(\tau\leq\sigma)\circ j_i(\sigma)=\beta\circ\circ\Esheaf_i(\tau\leq\sigma).
\]
But $\Esheaf_1(\tau\leq\sigma)$ and $\Esheaf_2(\tau\leq\sigma)$ are epic, so $\alpha\circ j_1(\tau)=\beta\circ j_1(\tau)$ and $\alpha\circ j_2(\tau)=\beta\circ j_2(\tau)$.
But if $\alpha\circ j_1(\tau)=\beta\circ j_1(\tau):\Esheaf_1(\tau)\to A$ and $\alpha\circ j_2(\tau)=\beta\circ j_2(\tau):\Esheaf_1(\tau)\to A$ then by universality of the biproduct\footnote{In particular, we only require universality of the coproduct.} $\Esheaf_1(\tau)\oplus\Esheaf_2(\tau)$, it follows $\alpha=\beta$ and hence $\Esheaf_1\oplus\Esheaf_2(\tau\leq\sigma)$ is epic.

Finally, having established that $\Esheaf_1\oplus\Esheaf_2$ is an episheaf, we now use the fact that $\Esheaf_1\oplus\Esheaf_2\surj \Esheaf_3$ (c.f. \cite[Theorem~2.17]{freyd1964abelian}) to show $\Esheaf_3$ is an episheaf.
To this end, consider the following commutative diagram.
\begin{center}
\begin{tikzcd}[column sep={2.6cm,between origins},row sep={2cm,between origins}]
	\Esheaf_1(\sigma)\oplus\Esheaf_2(\sigma) & & \Esheaf_3(\sigma) \\
	\Esheaf_1(\tau)\oplus\Esheaf_2(\tau) & &  \Esheaf_3(\tau)
    \arrow[two heads, "\Esheaf_1\oplus\Esheaf_2(\tau\leq\sigma)"', from=1-1, to=2-1]
    \arrow["\Esheaf_3(\tau\leq\sigma)", from=1-3, to=2-3]
    \arrow[two heads, "f_1(\sigma)\oplus f_2(\sigma)", from=1-1, to=1-3]
    \arrow[two heads, "f_1(\tau)\oplus f_2(\tau)", from=2-1, to=2-3]
\end{tikzcd}
\end{center}
To show $\Esheaf_3(\tau\leq\sigma)$ is epic, suppose $\alpha,\beta:\Esheaf_3(\tau)\to A$ are such that $\alpha\circ\Esheaf_3(\tau\leq\sigma) = \beta\circ \Esheaf_3(\tau\leq\sigma)$.
Precomposing with $f_1(\sigma)\oplus f_2(\sigma)$ we get 
\[
\alpha\circ\Esheaf_3(\tau\leq\sigma)\circ\left[f_1(\sigma)\oplus f_2(\sigma)\right] = \beta\circ\Esheaf_3(\tau\leq\sigma)\circ\left[f_1(\sigma)\oplus f_2(\sigma)\right]
\]
and chasing the above diagram then shows
\[
\alpha\circ \left[f_1(\tau)\oplus f_2(\tau)\right]\circ \left[ \Esheaf_1\oplus\Esheaf_2(\tau\leq\sigma)\right] = \beta\circ \left[f_1(\tau)\oplus f_2(\tau)\right]\circ \left[ \Esheaf_1\oplus\Esheaf_2(\tau\leq\sigma)\right]
\]
But then $\alpha\circ \left[f_1(\tau)\oplus f_2(\tau)\right]= \beta\circ \left[f_1(\tau)\oplus f_2(\tau)\right]$ since $\Esheaf_1\oplus\Esheaf_2(\tau\leq\sigma)$ is epic and in turn $\alpha=\beta$ since $f_1(\tau)\oplus f_2(\tau)$ is epic.
This $\Esheaf_3(\tau\leq\sigma)$ and hence $\Esheaf_3$ is an episheaf.

\end{proof}

Next, we will establish the existence of a minimal quotient-monocosheaf.
We will show that given any two quotient-monocosheaves there is a third quotient-monocosheaf which is smaller than both.
Then, as before, existence of a minimal quotient-monocosheaf follows from Zorn's Lemma  on the poset category of quotient-monocosheaves (c.f. Section~4 of \cite{macpherson2021persistent}).

\begin{lemma}
Given two quotient-monocosheaves $\Fcosheaf\surj\Mcosheaf_1$ and $\Fcosheaf\surj\Mcosheaf_2$ there exists another quotient-monocosheaf $\Fcosheaf\surj\Mcosheaf_3$ such that $\Mcosheaf_1\twoheadleftarrow\Mcosheaf_3\surj\Mcosheaf_2$.
\end{lemma}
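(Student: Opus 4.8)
The plan is to dualise Lemma~\ref{lem:bigger-epi} step for step, exchanging images for coimages, epimorphisms for monomorphisms, and the coproduct universal property for the product one. Write $g_1:\Fcosheaf\surj\Mcosheaf_1$ and $g_2:\Fcosheaf\surj\Mcosheaf_2$ for the two given quotient maps. Since $\cosheafcat(X)$ is abelian, the biproduct $\Mcosheaf_1\oplus\Mcosheaf_2$ exists and serves as a product, so the pair $(g_1,g_2)$ induces a canonical map $h=\langle g_1,g_2\rangle:\Fcosheaf\to\Mcosheaf_1\oplus\Mcosheaf_2$ with $\pi_i\circ h=g_i$, where $\pi_i$ is the $i$-th projection. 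I would then set $\Mcosheaf_3:=\coim(h)$, which (as in an abelian category $\coim(h)\cong\im(h)$) comes with the canonical factorisation $\Fcosheaf\surj\Mcosheaf_3\xhookrightarrow{m}\Mcosheaf_1\oplus\Mcosheaf_2$. The epic part of this factorisation is exactly the required quotient map $\Fcosheaf\surj\Mcosheaf_3$.

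Next I would check that $\Mcosheaf_3$ is a monocosheaf. This proceeds in two moves. First, $\Mcosheaf_1\oplus\Mcosheaf_2$ is a monocosheaf: its extension map over a face relation $\sigma\leq\tau$ is the direct sum $\Mcosheaf_1(\sigma\leq\tau)\oplus\Mcosheaf_2(\sigma\leq\tau)$ of two monomorphisms, which is again a monomorphism (this is precisely the dual of the biproduct-of-episheaves diagram chase carried out in Lemma~\ref{lem:bigger-epi}, and can be run verbatim by reversing arrows and replacing ``epic'' with ``monic''). Second, a subcosheaf of a monocosheaf is a monocosheaf: for $\sigma\leq\tau$ the naturality square gives
\[
m(\tau)\circ\Mcosheaf_3(\sigma\leq\tau)=\left(\Mcosheaf_1\oplus\Mcosheaf_2\right)(\sigma\leq\tau)\circ m(\sigma),
\]
and since both factors on the right are monic, the composite is monic, whence its left factor $\Mcosheaf_3(\sigma\leq\tau)$ is monic. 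Note that this replaces the ``quotient of an episheaf is an episheaf'' step of the dual proof with the easier ``subobject of a monocosheaf is a monocosheaf'' step, so this half of the argument is actually shorter than its dual.

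It remains to produce the two quotient maps $\Mcosheaf_1\twoheadleftarrow\Mcosheaf_3\surj\Mcosheaf_2$. Define $p_i:=\pi_i\circ m:\Mcosheaf_3\to\Mcosheaf_i$. Because $g_i=\pi_i\circ h=p_i\circ\bigl[\Fcosheaf\surj\Mcosheaf_3\bigr]$ and $g_i$ is epic, the left factor $p_i$ must be epic as well. Thus $p_1,p_2$ are the desired epimorphisms, and they are automatically compatible with the original data since $p_i\circ\bigl[\Fcosheaf\surj\Mcosheaf_3\bigr]=g_i$, exhibiting $\Mcosheaf_3$ as a quotient-monocosheaf lying below both $\Mcosheaf_1$ and $\Mcosheaf_2$. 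I expect the only genuinely non-routine ingredient to be the first move of the middle paragraph, i.e.\ verifying that the biproduct of monocosheaves is a monocosheaf; but this is the exact formal dual of the corresponding verification in Lemma~\ref{lem:bigger-epi}, so it requires no new idea and the construction works in any abelian category (indeed in any category possessing coimages and finite products whose projections are epic).
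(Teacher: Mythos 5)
Your proposal is correct and follows essentially the same route as the paper: the paper likewise defines $\Mcosheaf_3$ as the coimage of the canonical map $\Fcosheaf\to\Mcosheaf_1\oplus\Mcosheaf_2$ and then invokes the dual of the arguments in Lemma~\ref{lem:bigger-epi}, which is precisely the dualization you write out explicitly. The only (harmless) deviation is cosmetic: you obtain the epimorphisms $\Mcosheaf_3\surj\Mcosheaf_1$ and $\Mcosheaf_3\surj\Mcosheaf_2$ by composing the monic part of the factorisation with the biproduct projections and cancelling against the given epimorphisms, rather than via the couniversal property of the coimage, and you correctly observe that the ``subcosheaf of a monocosheaf'' step is shorter than its episheaf dual.
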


Note that in full generality, the following proof applies to cosheaves valued in any category where all coimages, coequalizers and binary products exist, and all canonical projection maps of products are epic.

\begin{proof}
$\cosheafcat(X)$ is abelian, so all binary products (in particular products) exist.
Given $\rho_1:\Fcosheaf\surj\Mcosheaf_1$ and $\rho_2:\Fcosheaf\surj\Mcosheaf_2$ this means there will be a canonical map $\rho_1\oplus\rho_2:\Fcosheaf\to\Mcosheaf_1\oplus\Mcosheaf_2$ and $\Fcosheaf\surj \cosheaf{\mathrm{coim}(f_1\oplus f_2)}$.
Then $\Mcosheaf_3:=\cosheaf{\mathrm{coim}(f_1\oplus f_2)}$ is the desired quotient-monocosheaf and the proof that $\Mcosheaf_3$ is a monocosheaf and that there are canonical epimorphisms $\Mcosheaf_1\twoheadleftarrow\Mcosheaf_3\surj\Mcosheaf_2$ follows from dual arguments of the proof in Lemma~\ref{lem:bigger-epi}.
\end{proof}

The above results as well as the discussion in Section~\ref{sec:prelim} establishes the well-definedness of isobisheafification.
Finally, to complete the construction of PLS's, we present a proof of Lemma~\ref{lem:colocal}.
Note that in full generality, the proof applies to bisheaves valued in any category where all images and equalizers exist and all morphisms which are both epic and monic are isomorphisms.
This includes non-abelian categories like $\mathbf{Set}$.

\begin{proof}[Proof of Lemma~\ref{lem:colocal}]
Let $\local$ denote the image of $I$.
By the category theoretic definition of images we have the following commutative diagram in $\Ab$:
\[
\begin{tikzpicture}[thick]
\node (sheaft) at (0,2) { $\Esheaf(\tau)$};
\node (sheafs) at (4,2) {$\Esheaf(\sigma)$};
\node (cosheaft) at (0,0) { $\Mcosheaf(\tau)$};
\node (cosheafs) at (4,0) { $\Mcosheaf(\sigma)$};
\node (localt) at (-2,-1) { $\local(\tau)$};
\node (locals) at (6,-1) { $\local(\sigma)$};

\path
(sheaft) edge[->>] node [above]  {$\Esheaf(\sigma\leq\tau)$} (sheafs)
(cosheafs) edge[left hook ->] node [above] {$\Mcosheaf(\sigma\leq\tau)$} (cosheaft)
(locals) edge[left hook ->] (cosheafs)
(localt) edge[right hook ->] (cosheaft)
(localt) edge[right hook ->,dashed] node [above] {$!f$} (locals)
(sheaft) edge[->] node [right]  {$I_\tau$} (cosheaft)
(sheafs) edge[->] node [right]  {$I_\sigma$} (cosheafs)
(sheaft) edge[->>] node [right]  {$\tilde{I}_\tau$} (localt)
(sheafs) edge[->>] node [right]  {$\tilde{I}_\sigma$} (locals);
\end{tikzpicture}
\]
where $\tilde{I}_\sigma$ and $\tilde{I}_\tau$ are epic by \cite[Theorem~2.17]{freyd1964abelian} and $\local(\tau)\inj\local(\sigma)$ is a unique monomorphism by the universal property of $\local(\tau)\inj\Fcosheaf(\tau)$.
All morphisms in an abelian category which are both epic and monic are isomorphisms (c.f. \cite[Theorem~2.12]{freyd1964abelian}), so it suffices to show that $f:\local(\tau)\surj\local(\sigma)$ is epic, where the inverse $f^{-1}=\local(\sigma\leq\tau):\local(\sigma)\to\local(\tau)$ will be the isomorphism of the desired colocal system.

Let $\alpha,\beta:\local(\tau)\to A$ be such that $\alpha\circ f=\beta\circ f$.
But composing these with $\tilde{I}_\tau$ we observe
\[
\alpha\circ\tilde{I}_\sigma\circ\Esheaf(\sigma\leq\tau)=\alpha\circ f\circ \tilde{I}_\tau=\beta\circ f\circ \tilde{I}_\tau = \beta\circ \tilde{I}_\sigma\circ\Esheaf(\sigma\leq\tau).
\]
Finally, $\Esheaf(\sigma\leq\tau)$ being epic implies $\alpha\circ \tilde{I}_\sigma=\beta\circ \tilde{I}_\sigma$ and $\tilde{I}_\sigma$ being epic implies $\alpha=\beta$.
Hence $f$ is epic.
\end{proof}

\section{Isobisheafification Diagrams} \label{sec:diagrams}

Recall that in Algorithm~\ref{alg:epi} we first project onto a collection of epimorphisms and then project onto a functor in order to ensure all maps are well-defined.
When $\sigma$ is a maximal cell, we set $\Fsheaf'_j(\sigma)=\Fsheaf_j(\sigma)$ and $\iota^\sigma_j=id$.
Otherwise, if $\tau_1,\dots,\tau_k$ are all the faces of $\sigma$, i.e. $\sigma\leq \tau_i$ for $i=1,\dots,k$, we assign $\Fsheaf'_j(\sigma)$ the pullback of all canonical image maps $\im\left(\Fsheaf_j(\sigma\leq\tau_i)\inj\Fsheaf_j(\sigma)\right)$.
Thus, we project onto an epimorphism in the sense that $\Fsheaf_j(\tau_i)\surj\im\left(\Fsheaf_j(\sigma\leq\tau_i)\right)$ and the pullback gives a natural embedding 
For the pullback of two maps, this looks like the following diagram.

\begin{center}
\begin{tikzcd}[column sep={1.5cm,between origins},row sep={1.6cm,between origins}]
	& & & & {\Fsheaf_j(\tau_1)}\\
	& & \Fsheaf'_j(\sigma) & & \im\left(\Fsheaf_j(\sigma\leq\tau_1)\right) \\
	\Fsheaf_j(\tau_2) & & \im\left(\Fsheaf_j(\sigma\leq\tau_2)\right) & & \Fsheaf_j(\sigma)
	\arrow[two heads,from=3-1, to=3-3]
	\arrow[two heads,from=1-5, to=2-5]
        \arrow[hook, from=2-3, to=2-5]
        \arrow[hook, from=2-3, to=3-3]
        \arrow[hook, from=3-3, to=3-5]
        \arrow[hook, from=2-5, to=3-5]
        \arrow[hook, "\iota^\sigma_j",  from=2-3, to=3-5]
        \arrow[bend right=20, "\Fsheaf_j(\sigma\leq\tau_2)"', from=3-1, to=3-5]
        \arrow[bend left=90, "\Fsheaf_j(\sigma\leq\tau_1)", from=1-5, to=3-5]
\end{tikzcd}
\end{center}

After projecting onto epimorphisms, however, we have no canonical maps $\Fsheaf'_j(\tau)\to\Fsheaf'_j(\sigma)$ for each face relation $\sigma\leq \tau$, so we must further pullback to create such maps (i.e. projecting onto the functor).
To do this, when $\sigma$ is now a minimal cell, we set $\Fsheaf_{j+1}(\sigma)=\Fsheaf'_j(\sigma)$.
Otherwise, if $\tau_1,\dots,\tau_k$ are all the cofaces of $\sigma$, i.e. $\tau_i< \sigma$ for $i=1,\dots,k$, we have already determined $\Fsheaf_{j+1}(\tau_i)$.
Inductively, there will be an embedding $\Fsheaf_{j+1}(\tau_i)\inj\Fsheaf'_j(\tau_i)$ and for each $i$ we will have the following pullback diagram.

\begin{center}
\begin{tikzcd}[column sep={2cm,between origins},row sep={1.4cm,between origins}]
	\mathcal{E}^{\sigma\tau_i}_j & & & {\Fsheaf_j(\sigma)}\\
	& & \mathrm{im}\left(\Fsheaf'_j(\tau_i\leq\sigma)\right) &  \\
	\Fsheaf_{j+1}(\tau_i) & \Fsheaf'_j(\tau_i) & & \Fsheaf_j(\tau_i)
    \arrow[hook, from=1-1, to=1-4]
    \arrow[from=1-1, to=3-1]
    \arrow[hook, from=3-1, to=3-2]
    \arrow[hook, from=3-2, to=3-4]
    \arrow[hook, from=3-2, to=2-3]
    \arrow[two heads, from=1-4, to=2-3]
    \arrow[hook, from=2-3, to=3-4]
    \arrow["\Fsheaf_j(\tau_i\leq\sigma)", from=1-4, to=3-4]
\end{tikzcd}
\end{center}

However, these pullbacks are only with respect to $\Fsheaf_j(\sigma)$, so finally, we must pullback with respect to $\iota^\sigma_j:\Fsheaf'_j(\sigma)\inj\Fsheaf_j(\sigma)$.
For the pullback of two maps, as well as $\iota^\sigma_j$, this looks like the following diagram.

\begin{center}
\begin{tikzcd}[column sep={1.5cm,between origins},row sep={1.4cm,between origins}]
	\Fsheaf_{j+1}(\sigma) & & \mathcal{E}^{\sigma\tau_1} \\
	& \Fsheaf'_j(\sigma) &  \\
	\mathcal{E}^{\sigma\tau_2} & & \Fsheaf_j(\sigma)
        \arrow[hook, from=1-1, to=1-3]
        \arrow[hook, from=1-1, to=3-1]
        \arrow[hook, from=1-1, to=2-2]
        \arrow[hook, from=3-1, to=3-3]
        \arrow[hook, from=1-3, to=3-3]
        \arrow[hook, "\iota^\sigma_j", from=2-2, to=3-3]
\end{tikzcd}
\end{center}

All concepts of Algorithm~\ref{alg:mono} are exactly dual to the construction of Algorithm~\ref{alg:epi}, so recall that we first project on a family of monomorphisms and then project onto a functor.
When $\sigma$ is a maximal cell, we set $\Fsheaf'_j(\sigma)=\Fsheaf_j(\sigma)$ and $\rho^\sigma_j=id$.
Otherwise, if $\tau_1,\dots,\tau_k$ are all the faces of $\sigma$, i.e. $\sigma\leq \tau_i$ for $i=1,\dots,k$, we assign $\Fcosheaf'_j(\sigma)$ the pushout of all canonical coimage maps $\Fcosheaf_j(\sigma)\surj \mathrm{coim}\left(\Fcosheaf_j(\sigma\leq\tau_i)\right)$.
Thus, we project onto a monomorphism in the sense that $\mathrm{coim}\left(\Fcosheaf_j(\sigma\leq\tau_i)\right)\inj\Fcosheaf_j(\tau_i)$ and the pushorward gives a natural projection.
For the pushout of two maps, this looks like the following diagram.

\begin{center}
\begin{tikzcd}[column sep={1.5cm,between origins},row sep={1.6cm,between origins}]
	\Fcosheaf_j(\sigma) & & \mathrm{coim}\left(\Fcosheaf_j(\sigma\leq\tau_2)\right) & & {\Fcosheaf_j(\tau_2)}\\
	\mathrm{coim}\left(\Fcosheaf_j(\sigma\leq\tau_1)\right) & & \Fcosheaf'_j(\sigma) & &  \\
	\Fcosheaf_j(\tau_1) & & & &
	\arrow[hook,from=1-3, to=1-5]
	\arrow[hook,from=2-1, to=3-1]
        \arrow[two heads, from=1-1, to=1-3]
        \arrow[two heads, from=1-1, to=2-1]
        \arrow[two heads, from=1-3, to=2-3]
        \arrow[two heads, from=2-1, to=2-3]
        \arrow[two heads, "\rho^\sigma_j",  from=1-1, to=2-3]
        \arrow[bend left=20, "\Fcosheaf_j(\sigma\leq\tau_2)", from=1-1, to=1-5]
        \arrow[bend right=90, "\Fcosheaf_j(\sigma\leq\tau_1)"', from=1-1, to=3-1]
\end{tikzcd}
\end{center}

After projecting onto monomorphisms, however, we have no canonical maps $\Fcosheaf'_j(\sigma)\to\Fcosheaf'_j(\tau)$ for each face relation $\sigma\leq \tau$, so we must further pushout to create such maps (i.e. projecting onto the functor).
To do this, when $\sigma$ is now a minimal cell, we set $\Fcosheaf_{j+1}(\sigma)=\Fcosheaf'_j(\sigma)$.
Otherwise, if $\tau_1,\dots,\tau_k$ are all the cofaces of $\sigma$, i.e. $\tau_i< \sigma$ for $i=1,\dots,k$, we have already determined $\Fcosheaf_{j+1}(\tau_i)$.
Inductively, there will be a projection $\Fcosheaf'_j(\tau_i)\surj\Fcosheaf_{j+1}(\tau_i)$ and for each $i$ we will have the following pushout diagram.

\begin{center}
\begin{tikzcd}[column sep={2cm,between origins},row sep={1.4cm,between origins}]
	 \Fcosheaf_j(\tau_i) & & \Fcosheaf'_j(\tau_i) & {\Fcosheaf_{j+1}(\tau_i)}\\
	& \mathrm{coim}\left(\Fcosheaf'_j(\tau_i\leq\sigma)\right) & &  \\
	\Fcosheaf_j(\sigma) & & & \mathcal{N}^{\sigma\tau_i}_j
    \arrow["\Fcosheaf_j(\tau_i\leq\sigma)"', from=1-1, to=3-1]
    \arrow[two heads, from=1-1, to=1-3]
    \arrow[two heads, from=1-3, to=1-4]
    \arrow[two heads, from=1-1, to=2-2]
    \arrow[two heads, from=2-2, to=1-3]
    \arrow[two heads, from=3-1, to=3-4]
    \arrow[hook', from=2-2, to=3-1]
    \arrow[from=1-4, to=3-4]
\end{tikzcd}
\end{center}

However, these pushouts are only with respect to $\Fcosheaf_j(\sigma)$, so finally, we must pushback with respect to $\rho^\sigma_j:\Fcosheaf_j(\sigma)\surj\Fcosheaf'_j(\sigma)$.
For the pushout of two maps, as well as $\rho^\sigma_j$, this looks like the following diagram.

\begin{center}
\begin{tikzcd}[column sep={1.5cm,between origins},row sep={1.4cm,between origins}]
	\Fcosheaf_j(\sigma) & & \mathcal{N}^{\sigma\tau_1} \\
	& \Fcosheaf'_j(\sigma) &  \\
	\mathcal{N}^{\sigma\tau_2} & & \Fcosheaf_{j+1}(\sigma)
        \arrow[two heads, from=1-3, to=3-3]
        \arrow[two heads, from=3-1, to=3-3]
        \arrow[two heads, from=1-1, to=1-3]
        \arrow[two heads, from=1-1, to=3-1]
        \arrow[two heads, from=2-2, to=3-3]
        \arrow[two heads, "\rho^\sigma_j", from=1-1, to=2-2]
\end{tikzcd}
\end{center}

\end{document}